\documentclass[aop,preprint]{imsart}

%% Packages
\RequirePackage{amsthm,amsmath,amsfonts,amssymb}
\RequirePackage[numbers]{natbib}
\RequirePackage[colorlinks,citecolor=blue,urlcolor=blue]{hyperref}
\RequirePackage{graphicx}

\RequirePackage{mathrsfs}
\usepackage{mathrsfs}

\startlocaldefs

\theoremstyle{plain}

\newtheorem{theorem}{Theorem}[section]
\newtheorem{lemma}[theorem]{Lemma}

\theoremstyle{remark}
\newtheorem{definition}[theorem]{Definition}

%% Please put your definitions here:        %%
%%%%%%%%%%%%%%%%%%%%%%%%%%%%%%%%%%%%%%%%%%%%%%
\allowdisplaybreaks[3]
\usepackage{color}
\newcommand{\bbA}{{\bf A}}
\newcommand{\bbP}{{\bf P}}
\newcommand{\bbE}{{\bf E}}

\newcommand{\bbC}{{\bf C}}
\newcommand{\bbD}{{\bf D}}
\newcommand{\bbS}{{\bf S}}
\newcommand{\bbR}{{\bf R}}
\newcommand{\bbT}{{\bf T}}
\newcommand{\bbQ}{{\bf Q}}
\newcommand{\bbI}{{\bf I}}
\newcommand{\bbM}{{\bf M}}

\newcommand{\bbK}{{\bf K}}
\newcommand{\bbB}{{\bf B}}
\newcommand{\bbX}{{\bf X}}
\newcommand{\bbx}{{\bf x}}

\newcommand{\bbY}{{\bf Y}}

\newcommand{\bbr}{{\bf r}}

\newcommand{\bbu}{{\bf u}}
\newcommand{\bbv}{{\bf v}}

\newcommand{\bbalp}{{\bf \alpha}}

\newcommand{\bqn}{\begin{eqnarray*}}
	\newcommand{\eqn}{\end{eqnarray*}}

\newcommand{\rE}{{\textrm{E}}}

\newcommand{\rP}{\textrm{P}}

\newcommand{\bqa}{\begin{eqnarray}}
	\newcommand{\eqa}{\end{eqnarray}}

\newtheorem{thm}{Theorem}[section]

\newtheorem{remark}[thm]{Remark}

%\numberwithin{equation}{section}

\endlocaldefs

\begin{document}
	
	\begin{frontmatter}
		\title{No Eigenvalues Outside the Support of the Limiting Spectral Distribution of Large Dimensional noncentral Sample Covariance Matrices}
		%\title{A sample article title with some additional note\thanksref{t1}}
		\runtitle{}
		%\thankstext{T1}{A sample additional note to the title.}
		
		\begin{aug}
			
			%%%%%%%%%%%%%%%%%%%%%%%%%%%%%%%%%%%%%%%%%%%%%%%
			\author[A]{\fnms{Zhidong}~\snm{Bai}\ead[label=e1]{baizd@nenu.edu.cn}}
			\author[A]{\fnms{Jiang}~\snm{Hu}\ead[label=e2]{huj156@nenu.edu.cn}}
			\author[B]{\fnms{Jack W.}~\snm{Silverstein}\ead[label=e3]{jack@ncsu.edu}}
			\author[A]{\fnms{Huanchao}~\snm{Zhou}\ead[label=e4]{zhouhc782@nenu.edu.cn}}

			%%%%%%%%%%%%%%%%%%%%%%%%%%%%%%%%%%%%%%%%%%%%%%
			%% Addresses                                %%
			%%%%%%%%%%%%%%%%%%%%%%%%%%%%%%%%%%%%%%%%%%%%%%
			\address[A]{KLASMOE and School of Mathematics and Statistics, Northeast Normal University, China \printead[presep={,\ }]{e1,e2,e4}}
			%			\address[B]{Northeast Normal University\printead[presep={,\ }]{e2}}
			%			\address[C]{Northeast Normal University\printead[presep={,\ }]{e3}}
			\address[B]{Department of Mathematics, North Carolina State University, USA \printead[presep={,\ }]{e3}}
			
		\end{aug}
		
		\begin{abstract}
			
			Let $ \bbB_n =\frac{1}{n}(\bbR_n + \bbT^{1/2}_n \bbX_n)(\bbR_n + \bbT^{1/2}_n \bbX_n)^* $, where $ \bbX_n $ is a  $ p \times n $ matrix with independent standardized random variables, 
			$ \bbR_n $ is a $ p \times n $ non-random matrix and 
			$ \bbT_{n} $ is a $ p \times p $ non-random, nonnegative definite Hermitian matrix.  The matrix $\bbB_n$ is referred to as the  information-plus-noise type matrix, where $\bbR_n$ contains the information and $\bbT^{1/2}_n \bbX_n$ is the noise matrix with the covariance matrix $\bbT_{n} $.
			It is known that, as $ n \to \infty $, if $ p/n $ converges to a positive number, the empirical spectral distribution of $ \bbB_n  $ converges almost surely to a nonrandom limit, under some mild conditions. 
			In this paper, we prove that, under certain conditions on the eigenvalues of $ \bbR_n $ and 
			$ \bbT_n $, for any closed interval outside the support of the limit spectral distribution, with probability one there will be no eigenvalues falling in this interval for all $ n $ sufficiently large.
		\end{abstract}
		
	\begin{keyword}[class=MSC]
		\kwd[Primary ]{60E99}
		\kwd{ 26A46}
		\kwd[; secondary ]{62H99}
	\end{keyword}
	
	\begin{keyword}
		\kwd{Random matrix}
		\kwd{LSD}
		\kwd{Stieltjes transform}
		\kwd{Information-plus-noise matrix}
	\end{keyword}
		
	\end{frontmatter}
	%%%%%%%%%%%%%%%%%%%%%%%%%%%%%%%%%%%%%%%%%%%%%%
	%% Please use \tableofcontents for articles %%
	%% with 50 pages and more                   %%
	%%%%%%%%%%%%%%%%%%%%%%%%%%%%%%%%%%%%%%%%%%%%%%
	%\tableofcontents
	
	\section{Introduction}
	Let $ \bbX_n=(x_{ij}) $ be a  $ p \times n $ matrix of independent and 
	standardized random variables $ (\bbE x_{ij} =0, \bbE \lvert x_{ij} \rvert^2=1  ) $, $ \bbR_n $ be a $ p \times n $ non-random matrix and $ \bbT_{n} $ be a $ p \times p $ non-random nonnegative definite Hermitian matrix. 
	The matrix
	\begin{equation*} %\label{bb1} 
	\bbB_n =\frac{1}{n}(\bbR_n + \bbT^{1/2}_n \bbX_n)(\bbR_n + \bbT^{1/2}_n \bbX_n)^*,
	\end{equation*} 
	is referred to as the information-plus-noise type matrix, where the information is contained in the matrix 
	$ \bbR_n \bbR_n^* /n$ and the matrix $ \bbT^{1/2}_n \bbX_n $ is the additive noise.
	The limiting spectral distribution (LSD) of $\bbB_n$ has been studied in \cite{zhou2022limiting}, and the result is expressed in terms of the empirical spectral distribution (ESD) function $F^{\bbB_n}$. More specifically,  assume that  $u_i$ and $t_i$ are the paired eigenvalues of $\bbR_n\bbR_n^*/n$ and $\bbT_n$ in their simultaneous spectral decomposition. it is shown in \cite{zhou2022limiting} that  if the formation $\bbR_n\bbR_n^*/n$ commutes with the noise covariance $\bbT_n$, and as $\min\{p, n\} \to \infty$, $y_n = p/n \to y > 0$,  the two-dimensional distribution function
	$ H_n(u,t) = p^{-1} \sum_{i=1}^{p} I(u_i \le u, t_i \le t) $ converges weakly to a nonrandom limit distribution $ H(u, t) $,   then with probability one, $ F^{\bbB_n} $ converges in distribution to $ F, $ a nonrandom probability distribution function, whose Stieltjes transform $ s = s_F(z) $ satisfies the equation system
	\begin{equation}
	\left\{
	\begin{aligned}
	s=\int\frac{\mathrm{d}H(u,t)}{\frac{u}{1+yg}-(1+yst)z+t(1-y)}, \\
	g=\int\frac{t\mathrm{d}H(u,t)}{\frac{u}{1+yg}-(1+yst)z+t(1-y)}. \label{712.1}
	\end{aligned}
	\right.
	\end{equation}	
	Moreover, for each $ z\in \mathbb{C}^{+} $, $ (s, g) $ is the unique solution to $ \eqref{712.1} $ in  $ \mathbb{C}^{+} $.	
	Here and in the sequel,  the Stieltjes transform of $ F $ is defined as 
	\begin{align*}
	\begin{split}
	s_{F}(z)=\int \dfrac{1}{\lambda-z} \mathrm{d}F(\lambda), \quad 
	z\in \mathbb{C}^{+}\equiv \{ z\in \mathbb{C}: \Im z >0 \}, %\label{1.2}
	\end{split}
	\end{align*}
	and $ F $ can be obtained by the inversion formula 
	\begin{align}
	F(b)-F(a)=\frac{1}{\pi}\lim_{v\rightarrow 0^{+}}\int_{a}^{b}\Im s_{F}(x+iv)\mathrm{d} x, \label{1.3}
	\end{align}
	where $ a $, $ b $ are continuity points of $ F $. 
	
	The analytic properties of the Stieltjes transform of the LSD of $\bbB_n$ are studied in \cite{zhou2023analysis}.  
	Let $\underline{\bbB}_n=\frac{1}{n}(\bbR_n+\bbT^{\frac{1}{2}}_n \bbX_n)^*(\bbR_n +  \bbT^{\frac{1}{2}}_n \bbX_n) $. 
	The eigenvalues of the matrix $ \underline{\bbB}_n $ are the same as those of the matrix $ \bbB_n $ except $ \lvert n-p \rvert $ zero eigenvalues. 
	Therefore, their ESDs and Stieltjes transforms have  the  following relations
	\begin{gather*}
	F^{\underline{\bbB}_n}=\left(1-\frac{p}{n} \right)I_{[0,\infty]}+\frac{p}{n} F^{ \bbB_n},\\%\label{3.1}
	\underline{s}(z)=-\dfrac{1-y}{z}+ys(z)\mbox{~~and~~}
	\underline{g}(z)=-\dfrac{1}{z(1+yg(z))},   
	\end{gather*}
	where $ \underline{s}(z)$ is the Stieltjes transform of the LSD 
	$ F^{\underline{\bbB}_n} $ and $ \underline{s}_{n}(z)=s_{F^{\underline{\bbB}_n}}(z) $ and  
	$ \underline{g}(z)$ is the limit of  $ \underline{g}_n=\frac{1}{p}\mathrm{tr} \bbT_n \left(\underline{\bbB}_n-z\bbI \right)^{-1} $.
	Then the equations in \eqref{712.1} become
	\begin{equation}
	\left\{ 
	\begin{aligned}
	z=&-\dfrac{1-y}{\underline{s}}-\frac{y}{\underline{s}}\int \dfrac{ \mathrm{d} H(t,u)}{1 + u\underline{g}(z) + t\underline{s}(z)},
	\\
	z=&-\dfrac{1}{\underline{g}}+y\int \dfrac{t \mathrm{d} H(t,u)}{1 + u\underline{g}(z) + t\underline{s}(z)}. \label{713}
	\end{aligned}
	\right.
	\end{equation}
	It is shown that for all $ x \in \mathbb{R}^+ $, $ \lim_{z\in \mathbb{C}^+\to x } s_F (z) \equiv s(x) $ exists.  And continuous dependence of $ F $ on $ y $ and $ H $ is readily apparent from the inversion formula $ \eqref{1.3} $ and $ \eqref{713} $. Moreover, away from zero, $ F $ also has a continuous density.
	Moreover, the support of a distribution function $ F $ is the set of all points $ x $ satisfying 
	$ F(x + \varepsilon) - F(x - \varepsilon) > 0  $ for all $ \varepsilon> 0. $
	Let $ S_F $ and $ S_H $ denote the support of $ F $ and $ H $, respectively. 
	Clearly, by definition of $ F $ and $ H $, we have $ S_F \subset [0, \infty)  $ and  $ S_H \subset [0, \infty)$. 
	Then, on intervals outside the support of probability distribution function 
	$ F $, $ s_F(x) $ exists and is increasing. 
	
	The focus of this paper is on intervals of $ \mathbb{R}^+ \equiv \mathbb{R}-\{0\} $ lying outside the support of $ F $.
	%, it proves that the nonexistence of eigenvalues outside the support of $ F $. 
	%Similarly to \cite{bai1998no}, non-central sample covariance matrix has the same properties as central sample covariance matrix. 
	We will prove that when $ n $ is large, with probability one, there are no sample eigenvalues of $\bbB_n$ falling into the limiting spectral gaps. 
	If $ \bbT=\sigma^2 \bbI $, \cite{bai2012no}  proved that for any closed interval contained in an open interval in 
	$ \mathbb{R}^+ $ outside the supports of the limiting distribution $F^{y_n,H_n}$, then, almost surely, no eigenvalues of $\bbB_n$ will appear in this interval for all $ n $ large. And \cite{capitaine2014exact} proved the exact separation theorem.

	%As mentioned in \cite{bai2012no} and \cite{capitaine2014exact}, we present here the same result for the generalized model $ \eqref{712.1} $.
	
	In this paper, we prove that, under certain conditions on the eigenvalues of $ \bbR_n $ and 
	$ \bbT_n $, for any closed interval outside the support of the limit spectral distribution, with probability one there will be no eigenvalues falling in this interval for all $ n $ sufficiently large. Our main result of this paper is as follows.
	
	\begin{theorem} \label{th7221}
		Assume that
		\begin{enumerate}
			\item[(a)] $ [a, b] \subset (c, d) \subset S^c_{\underline{F}^{y_n,H_n} }  $, with $ c > 0 $ for all large $ n $;
			\item[(b)] The matrix $\bbX_n$ is the $p\times n$ upper-left conner of the double array of random variables $ x_{ij} $ having means zero, variances one, second moments zero if complex and  there  is a random variable 
			$ X $ with finite fourth moment such that for a constant $ K $ and for all 
			$ x > 0 $ 
			$$  \frac{1}{p} \sum_{i =1}^{p} \bbP(\lvert x_{ij} \rvert > x ) \leq K\bbP(\lvert X \rvert) > x) , $$
			and 
			$$  \frac{1}{n}  \sum_{j=1} ^{n}\bbP(\lvert x_{ij} \rvert > x ) \leq K\bbP(\lvert X \rvert) > x) ; $$
			
			\item[(c)] There exists a positive function $ \psi(x) \uparrow \infty $ as $ x \to \infty$, and $ M > 0  $ such that
			$$\max_{ij} \bbE \lvert x^2_{ij} \rvert \psi(\lvert x_{ij} \rvert) \leq M; $$
			\item[(d)] $ n = n(p) $ with $ y_n = {p}/{n} \to y > 0 $  as $ n \to \infty $;
			\item[(e)] For $ n = 1, 2, \dots, $  $\bbR_n $ is a $ p \times n $ nonrandom matrix with $\dfrac{1}{\sqrt{n} } \bbR_n  $ uniformly bounded in special norm for all $ n $;
			
			%\item[(f)] $ \bbB_n =n^{-1} (\bbR_n + \bbT_n^{1/2}  \bbX_n)(\bbR_n + \bbT_n^{1/2} \bbX_n)^* $, where $ \bbX_n = (x_{ij} ) $, $ i \leq p,  j \leq n $ and 
			%	$ x_{ij}  $ are independent standardized random variables;
			\item[(f)] The matrix $ \bbT_n $ is uniformly bounded in spectral norm and 
			$ \lambda_{-1} \leq K $ for some constant $ K $, and is also commutative with $ (1/n)\bbR_n\bbR^*_n$ and their joint spectral distribution $ H_n(u, t)  $ tends to a proper distribution $ H(u, t) $, where $ \lambda_{-1} =\int t^{-1} \mathrm{d} H(u,t) $. 
		\end{enumerate}
		
		Then, we have that
		\begin{align} \label{7442}
		\bbP(\text{no eigenvalues of } \bbB_n \ \text{appear in} \ [a, b] \ \text{for all large} \ n) = 1. 
		\end{align}
		
	\end{theorem}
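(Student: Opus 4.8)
The plan is to follow the route of \cite{bai2012no} (which treats $\bbT_n=\sigma^2\bbI$), while carrying along the second self‑consistent variable $\underline{g}_n(z)=\frac1p\tr\bbT_n(\underline{\bbB}_n-z\bbI)^{-1}$ that a general $\bbT_n$ forces into the system \eqref{713}. Write $\underline{s}_n(z)=\frac1n\tr(\underline{\bbB}_n-z\bbI)^{-1}$ and let $(\underline{s}_n^{0}(z),\underline{g}_n^{0}(z))$ be the solution of the finite‑$n$ version of \eqref{713} (replace $y,H$ by $y_n,H_n$), so $\underline{s}_n^{0}$ is the Stieltjes transform of $\underline{F}^{y_n,H_n}$; by the continuous dependence of $F$ on $(y,H)$ recalled above, $[a,b]$ lies in a gap of $\underline{F}^{y,H}$ and of $F$ as well. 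Since $a>c>0$, the eigenvalues of $\bbB_n$ in $[a,b]$ coincide with those of $\underline{\bbB}_n$, so it suffices to keep $\underline{\bbB}_n$ free of eigenvalues in $[a,b]$. First, using (b)–(c) I would truncate the $x_{ij}$ at level $\delta_n\sqrt n$ with $\delta_n\downarrow0$ so slowly that $\delta_n\sqrt n\to\infty$ and $\sum_n\bbP(\exists\,i,j:\ |x_{ij}|>\delta_n\sqrt n)<\infty$ — this is precisely where (c) is used — then recentre and renormalize; passing to the truncated matrix perturbs the ESD near $[a,b]$ only through a rank‑$o(n)$ change and an $o(1)$ spectral‑norm change, and the recentring is absorbed into an $o(1)$ spectral‑norm modification of $\bbR_n$ that preserves (e) and the limit $\underline{F}^{y_n,H_n}$. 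One may therefore assume $|x_{ij}|\le\delta_n\sqrt n$, with mean zero, variance one, and higher moments controlled through $\delta_n\sqrt n$ and the fourth‑moment bound in (b).

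The two analytic ingredients are sharp estimates on $\underline{s}_n(z)-\underline{s}_n^{0}(z)$ for $z=x+iv$ with $x$ in a slightly enlarged interval $[a',b']$ ($[a,b]\subset(a',b')\subset(c,d)$) and $v=v_n=n^{-\alpha}$ for small $\alpha>0$. For the fluctuation I would use the martingale decomposition $\underline{s}_n(z)-\bbE\underline{s}_n(z)=\sum_{k=1}^n(\bbE_k-\bbE_{k-1})\underline{s}_n(z)$ over the columns of $\bbX_n$, the rank‑one/Schur‑complement identities for $\underline{\bbB}_n$, the deterministic bound $\|(\underline{\bbB}_n-z\bbI)^{-1}\|\le v^{-1}$, and the standard quadratic‑form concentration for $\bbr_k^*M\bbr_k-\tfrac1n\tr M$ under the above moment bounds; Burkholder's inequality then yields $\bbE|\underline{s}_n(z)-\bbE\underline{s}_n(z)|^{2\ell}\le C_\ell\,n^{-2\ell}v^{-c\ell}$ for a fixed exponent $c$. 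For the bias I would show that $(\bbE\underline{s}_n,\bbE\underline{g}_n)$ satisfies the finite‑$n$ system \eqref{713} up to an additive error $O(n^{-1}v^{-c'})$, and then invoke the quantitative stability of that system on $S^c_{\underline{F}^{y_n,H_n}}$ coming from \cite{zhou2023analysis} — uniqueness of the solution there and invertibility of the associated linearization, where the coupling through $\underline{g}$ and the bound $\lambda_{-1}\le K$ in (f) are essential — to get $|\bbE\underline{s}_n(z)-\underline{s}_n^{0}(z)|\le C\,n^{-1}v^{-c'}$.

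Since $x\mapsto\underline{s}_n(x+iv_n)$ is Lipschitz with constant $v_n^{-2}$, it is enough to control $\underline{s}_n$ on an $O(v_n^{-2})$‑point grid of $[a',b']$; taking $\alpha$ below an explicit threshold depending on $c,c'$ and $\ell$ large, Markov's inequality and Borel–Cantelli give $\sup_{x\in[a',b']}|\underline{s}_n(x+iv_n)-\underline{s}_n^{0}(x+iv_n)|\to0$ almost surely, indeed at a rate $o\!\big((nv_n)^{-1}\big)$ for $\alpha$ chosen sufficiently small in the bookkeeping of \cite{bai2012no}. Finally, because $[a',b']$ avoids $S_{\underline{F}^{y_n,H_n}}$, one has $\Im\underline{s}_n^{0}(x+iv_n)=\int\frac{v_n\,d\underline{F}^{y_n,H_n}(\lambda)}{(\lambda-x)^2+v_n^2}\le v_n/\delta^2$ uniformly on $[a',b']$, with $\delta=\mathrm{dist}([a',b'],S_{\underline{F}^{y_n,H_n}})>0$ bounded below uniformly in $n$ by (a); whereas a stray eigenvalue $\lambda_0\in[a,b]$ of $\underline{\bbB}_n$ forces $\Im\underline{s}_n(\lambda_0+iv_n)=\tfrac1n\sum_j\frac{v_n}{(\lambda_j-\lambda_0)^2+v_n^2}\ge (nv_n)^{-1}$. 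Combining with the a.s. approximation, $(nv_n)^{-1}\le v_n/\delta^2+o\!\big((nv_n)^{-1}\big)$, impossible for all large $n$. Hence, with probability one, $\underline{\bbB}_n$ — and therefore $\bbB_n$ — has no eigenvalue in $[a,b]$ for all large $n$, which is \eqref{7442}.

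The genuinely new difficulty, and the one I expect to absorb most of the work, is the bias step: re‑deriving the coupled self‑consistent system for $(\bbE\underline{s}_n,\bbE\underline{g}_n)$ with error terms controlled in negative powers of $v$, and, above all, proving the quantitative stability of the \emph{two}‑equation system \eqref{713} on the spectral gap — in \cite{bai2012no} there is a single equation, whereas here the linearization is a $2\times2$ operator whose invertibility on $(c,d)$ must be established under (f), the role of $\lambda_{-1}\le K$ being to control the $\underline{g}$‑block. A secondary but unavoidable chore is the exponent bookkeeping in the last paragraph, which fixes the admissible range of $\alpha$ by balancing the powers of $v_n$ from the two moment bounds against the $(nv_n)^{-1}$ signature of a stray eigenvalue.
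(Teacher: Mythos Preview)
Your overall architecture is right — truncation, martingale control of $s_n-\bbE s_n$, a self-consistent approximation for $(\bbE\underline{s}_n,\bbE\underline{g}_n)$, and stability of the two-equation system \eqref{713} — and you correctly flag the $2\times 2$ stability as the new analytic content. But the last paragraph contains a real gap, and it propagates backward into the moment estimates.

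\textbf{The final contradiction does not close.} With $v_n=n^{-\alpha}$ and $\alpha$ small (as you stipulate), one has $(nv_n)^{-1}=n^{\alpha-1}\ll n^{-\alpha}=v_n$, so your inequality $(nv_n)^{-1}\le v_n/\delta^2+o((nv_n)^{-1})$ is \emph{satisfied}, not contradicted. The one-eigenvalue signature $(nv_n)^{-1}$ in $\Im\underline{s}_n$ only beats the background $\Im\underline{s}_n^{0}\le v_n/\delta^2$ when $\alpha>1/2$. But for $\alpha>1/2$ the crude moment bound $\bbE|\underline{s}_n-\bbE\underline{s}_n|^{2\ell}\le C_\ell n^{-2\ell}v^{-c\ell}$ cannot deliver the rate $o((nv_n)^{-1})$: Markov at level $(nv_n)^{-1}$ gives a tail of order $v_n^{(2-c)\ell}$, which diverges for any $c>2$, and here one has $c\ge 6$ (the factors $v^{-2}$ from $|\breve\beta_k^{-1}\overline\sigma_{nk}|$ and $v^{-4}$ from $\bbE|\sigma_{nk}|^2$ already force this). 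So neither choice of $\alpha$ works with a single application of the moment bound and a bare $\Im\underline{s}_n$ comparison.

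\textbf{What the paper does instead.} There is a two-pass bootstrap you are missing. First one proves the crude (``a'') bound $\bbE|s_n-\bbE s_n|^{2\ell}\le K_\ell n^{-2\ell}v_n^{-6\ell}$ on a wide $x$-range; this, via a Stieltjes-transform distance inequality (Lemma~\ref{lemma3.1}) and the Lipschitz property of $F_0^{y_n,H_n}$ (Lemma~\ref{lemma723}), yields $F_n([a',b'])=O(v_n^{1/2})$. Feeding this smallness back into the resolvent estimates — e.g.\ $n^{-1}\tr\bbT\bbD_{nk}^{-1}\bbT\overline{\bbD}_{nk}^{-1}=o(1)$ once the mass on $[a',b']$ is $o(v_n^4)$ — removes every negative power of $v_n$ and gives the refined (``b'') bound $\bbE|s_n-\bbE s_n|^{2\ell}\le K_\ell n^{-2\ell}$ and $|\bbE s_n-s_n^0|=O(n^{-1})$ uniformly on $[a,b]$. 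This is exactly what makes $nv_n|s_n-s_n^0|\to 0$ a.s.\ possible for \emph{small} $\alpha$. The paper then does \emph{not} use your $\Im\underline{s}_n$ comparison; instead it applies the estimate at imaginary parts $\sqrt{k}\,v_n$, $k=1,\dots,m$, and takes successive differences to produce
\[
\int \frac{\mathrm{d}(F^{\bbB_n}-F^{y_n,H_n})(\lambda)}{\prod_{k=1}^m\big((x-\lambda)^2+kv_n^2\big)}=o(1)\quad\text{a.s.},
\]
in which the contribution of any eigenvalue in $[a,b]$ is amplified to order $v_n^{-2m}$ and cannot be cancelled by the integral over $[a',b']^c$ (the latter tends to $0$ because $F^{\bbB_n}$ and $F^{y_n,H_n}$ share the same weak limit with no mass there). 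Your outline needs both the bootstrap and this product-in-the-denominator device; with only the crude bounds and the single-$v$ imaginary-part argument, the exponent bookkeeping you allude to cannot be balanced.
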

	\begin{remark}
		As mentioned in \cite{bai1998no,bai2012no,couillet2011deterministic}, assumptions (b)-(c) allow for the $ x_{ij} $ to depart from merely being
		i.i.d.. After suitable truncation, centralization, and scaling of the $ x_{ij} $'s one can assume these variables to be uniformly bounded.
	\end{remark}
	
	The rest of this paper is organized as follows. Some preliminary results of proving the theorem are introduced in Section 2. The proof of Theorem \ref{th7221} is split into the proofs of the convergence of the random part and the convergence of the non-random part Sections 3 and 4, respectively. We complete the proof of Theorem \ref{th7221} in Section 5. Some technical lemmas are given in Section 6.

	\section{Preliminary Results}	
	In this section, we give some preliminary results for the proof of Theorem \ref{th7221}.
	Before the truncation and centralization steps, we give a lemma below that can simplify the assumptions on the matrix of $ \bbX_n $. 
	\begin{lemma}\label{lemma5.11}
		Assume that the entries of $ \{x_{ij} \} $ are a double array of independent complex random variables with mean zero, variance $ \sigma^2 $, and satisfy the assumptions (b) -- (e) of Theorem \ref{th7221}. 
		Let $ \bbX_n = (x_{ij}; i \leq p, j \leq n) $ be the $ p \times n $ matrix of the upper-left corner of the double array. 
		Then, with probability one, we have
		\begin{align*}
		-2\sqrt{y}\sigma^2 &\leq  {\lim\inf}_{n\to \infty} \lambda_{\min}(\bbS_n - \sigma^2(1 + y)\bbI_n)\\
		&\leq {\lim \inf}_{n\to \infty} \lambda_{\max}(\bbS_n -\sigma^2(1 + y)\bbI_n)\leq 2\sqrt{y}\sigma^2, 
		\end{align*}
		where $\bbS_n=n^{-1}\bbX_n\bbX_n^*$.
		
	\end{lemma}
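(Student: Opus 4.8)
The plan is to recognize Lemma~\ref{lemma5.11} as (a mild restatement of) the Bai--Yin theorem on the extreme eigenvalues of a sample covariance matrix and to reduce it to the classical case by truncation. Writing $\sigma^2(1+y)\pm2\sqrt y\,\sigma^2=\sigma^2(1\pm\sqrt y)^2$, the assertion is that the extreme eigenvalues of $\bbS_n=n^{-1}\bbX_n\bbX_n^*$ are eventually confined, away from the trivial zeros that occur when $p\neq n$, to the Mar{\v c}enko--Pastur bracket $[\sigma^2(1-\sqrt y)^2,\sigma^2(1+\sqrt y)^2]$ -- now for entries $x_{ij}$ that are merely independent and satisfy the domination hypotheses (b)--(c). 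I would argue in two steps. \emph{Step~1 (truncation, centralization, rescaling).} Fix a small $\delta>0$, set $\widehat x_{ij}=x_{ij}I(|x_{ij}|\le\delta\sqrt n)$, $\check x_{ij}=(\widehat x_{ij}-\bbE\widehat x_{ij})/\sqrt{\Var\widehat x_{ij}}$, and form $\check\bbX_n=(\check x_{ij})$, $\check\bbS_n=n^{-1}\check\bbX_n\check\bbX_n^*$; show that replacing $\bbS_n$ by $\check\bbS_n$ leaves the $\liminf$s in the statement unchanged. \emph{Step~2 (truncated case).} For the $\check x_{ij}$, which are independent, centered, of variance one, and bounded by a small multiple of $\sqrt n$, establish the two-sided edge bound for $\check\bbS_n$ by the moment method. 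Alternatively, once Step~1 is in place one may simply quote the extreme-eigenvalue theory: the estimates for independent (not necessarily identically distributed) entries under hypotheses of the form (b)--(c) are available in \cite{bai1998no} (take $\bbT=\sigma^2\bbI$ there) and in the standard monograph treatment of limits of extreme eigenvalues of sample covariance matrices.

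The hypotheses enter as follows. Condition~(c) gives $\bbE[|x_{ij}|^2 I(|x_{ij}|>\delta\sqrt n)]\le M/\psi(\delta\sqrt n)$ uniformly in $i,j$, so every truncated variance equals $1+o(1)$ and, since $\bbE x_{ij}=0$, every truncated mean is $o(n^{-1/2})$; hence $n^{-1/2}\check\bbX_n$ and $n^{-1/2}\widehat\bbX_n$ differ by a term of vanishing operator norm (the centering part has Frobenius norm $o(\sqrt p)$, and the rescaling is $1+o(1)$ entrywise), which moves $\lambda_{\max}$ and $\lambda_{\min}$ of the two Gram matrices by only $o(1)$. Condition~(b) together with $\bbE|X|^4<\infty$ controls the passage from $\bbX_n$ to $\widehat\bbX_n$: for fixed $\delta$ the expected number of discarded entries obeys $\bbE\,\#\{(i,j):|x_{ij}|>\delta\sqrt n\}\le Kpn\,\bbP(|X|>\delta\sqrt n)\le K(p/n)\,\delta^{-4}\,\bbE[|X|^4 I(|X|>\delta\sqrt n)]\to0$, so with probability tending to one there are no discarded entries at all; this already yields $\liminf_n\lambda_{\max}(\bbS_n)\le\sigma^2(1+\sqrt y)^2$ along the relevant subsequence (which is why the conclusion is phrased with $\liminf$), and the full two-sided bound for every large $n$ then rests on the classical estimate that the operator norm of the discarded part tends to zero almost surely. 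Condition~(d) supplies $y_n=p/n\to y$, needed to identify the edges; condition~(e) plays no role here.

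The main obstacle is this last point: controlling the contribution of the entries removed by truncation. Bounding it by the Frobenius norm is too crude (the discarded entries carry no a priori upper bound), so one must exploit the sparsity of the discarded matrix -- few large entries per row and per column, by (b) -- through the delicate Bai--Yin argument, which uses the finite fourth moment essentially and is known to be sharp for the almost sure boundedness of $\lambda_{\max}$. I would either reproduce that estimate or, more economically, invoke the already-established versions of the extreme-eigenvalue theorem under the domination conditions. Granting that input, the remainder is the Step~1 reduction described above together with the moment-method edge bound for entries bounded by a small multiple of $\sqrt n$ in Step~2 -- organizing the trace expansion of $\bbE\,\mathrm{tr}\,(\check\bbS_n-\sigma^2(1+y)\bbI)^{2k}$, $k=k_n\to\infty$, so that all but the leading graphs are suppressed by a power of $\delta$, then letting $n\to\infty$ and $\delta\downarrow0$ -- which is routine.
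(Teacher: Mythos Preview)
Your approach is essentially the paper's: reduce to the classical extreme-eigenvalue theorem (Theorem~5.10 of \cite{bai2010spectral}) by truncation and centralization, verifying along the way that hypotheses (b) and (c) supply exactly the moment control that theorem requires. The paper, like you, does not redo the moment method but simply cites the standard result once the reduction is in place.

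There is one genuine tactical difference worth noting. You truncate at a \emph{fixed} level $\delta\sqrt n$ and then face, as you correctly identify, the ``main obstacle'' of controlling the operator norm of the discarded part almost surely. The paper instead truncates at $\delta_n\sqrt n$ with $\delta_n\to 0$ chosen slowly enough that $\sum_k \delta_{2^k}^{-2}2^{2k}\bbP(|X|\ge 2^{k/2}\delta_{2^k})<\infty$; a dyadic Borel--Cantelli argument using condition~(b) then gives $\bbP(\widehat\bbS_n\neq\bbS_n\ \text{i.o.})=0$, so eventually \emph{no} entries are discarded and your obstacle disappears entirely. This is cleaner than either bounding the discarded block directly or running a two-parameter $n\to\infty$, $\delta\downarrow 0$ argument. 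The centralization step is handled similarly in both approaches, via the uniform tail-variance bound from condition~(c); the paper bounds the operator norm of the mean matrix directly rather than passing through the Frobenius norm, but the conclusion is the same.
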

	\begin{proof}
		The proof of Lemma \ref{lemma5.11} is exactly the same as that of Theorem 5.10  of \cite{bai2010spectral} after the truncation and centralization. Therefore, we only present the truncation and centralization here, and the detailed proof of the lemma is omitted.  
		
		Without loss of generality, we assume $\sigma=1$. We first
		truncate the $x$-variables. Since $\rE|X|^4<\infty$, we can
		select a sequence of slowly decreasing constants $\delta_n\to 0$ such
		that $\sqrt n\delta_n$ is increasing, $\delta_n^{-2}n^2{\rP}(|X|>\delta_n\sqrt{n})\to 0$
		and 
		\begin{equation} \sum_{k} 
		\delta_{2^k}^{-2}2^{2k}\rP(|X|\ge
		2^{k/2}\delta_{2^k})<\infty. \label{sminfinite} \end{equation} 
		Define
		$x_{ijn}=x_{ij}I(|x_{ij}|\le \delta_n\sqrt{n})$ and construct a
		matrix $\widehat \bbS_n$ with the same structure of $\bbS_n$ by
		replacing $x_{ij}$ with $x_{ijn}$. Then, we have \bqn
		&&{\rP}\left(\widehat \bbS_n\neq \bbS_n,\mbox{i.o.}\right)\\
		&\le &\lim_{M\to\infty}\sum_{k=M}^\infty
		\rP\left(\bigcup_{2^k<n\le 2^{k+1}} \bigcup_{i\le p, j\le n}
		(x_{ijn}\neq x_{ij})\right)\\
		&\le& \lim_{M\to\infty}\sum_{k=M}^\infty
		\rP\left(\bigcup_{2^k<n\le 2^{k+1}} \bigcup_{i,j\le 2^{k+1}}
		(|x_{ij}|\ge \delta_{2^k}2^{k/2})\right)\\
		&=& \lim_{M\to\infty}\sum_{k=M}^\infty
		\rP\left(\bigcup_{i,j\le 2^{k+1}}
		(|x_{ij}|\ge \delta_{2^k}2^{k/2})\right)\\
		&\le&\lim_{M\to\infty}K\sum_{k=M}^\infty
		2^{2k+2}\rP\left(|X|\ge \delta_{2^k}2^{k/2}\right)=0. \eqn
		This shows that the truncation doesn't affect the limits of extreme eigenvalues of $\bbS_n$. Next, let $\widetilde \bbS_n$ be the matrix constructed as $\bbS_n$ with $x_{ij}$ replaced by $\tilde x_{ij}=x_{ijn}-\rE x_{ijn}$.  By Theorem A.46 of \cite{bai2010spectral}, we have 
		\bqn
		&&\max_{1\le i\le p}|\lambda_i(\widehat\bbS_n)-\lambda_i(\widetilde \bbS_n)|^2\le \left\|\frac1{\sqrt n} (\rE x_{ijn})\right\|^2\\
		&\le& \sup_{|c_1|^2+\cdots+|c_p^2|=1}\frac1n\sum_{i=1}^p\sum_{j=1}^n\left(\sum_{i=1}^p|c_i|\rE|x_{ij}I(|x_{ij}|>\delta_n\sqrt{n})\right)^2\\
		&\le& \frac1n \sum_{i=1}^p\sum_{j=1}^n\left(\int_{\delta_n\sqrt{n}}^\infty P(|x_{ij}|>x)\mathrm{d}x \right)^2\\
		&\le& \delta_n^{-1}n^{1/2}\left(K\int_{\delta_n\sqrt{n}}^\infty \rP(|X|>x)\mathrm{d}x\right)\\
		&=& K\delta_n^{-3}n^{-1/2}\to 0,
		\eqn
		where we have used the fact that $$ \int_{\delta_n\sqrt{n}}^\infty P(|x_{ij}|>x)\mathrm{d}x \le \delta_n^{-1}n^{-1/2}\int_{0}^\infty xP(|x_{ij}|\ge x)\mathrm{d}x \le \delta_n^{-1}n^{-1/2}. $$ 
		This shows that the centralization doesn't affect the limits of extreme eigenvalues of $\bbS_n$.
		
		Therefore, when prove Lemma \ref{lemma5.11}, we may assume that 
		\bqa
		&(1)~&\rE(x_{ij})=0, \ \ \rE(|x_{ij}|^2)\le 1,\ \mbox{ and }\
		\rE(|x_{ij}|^2)\to 1. \label{addtnal} \\
		&(2)~& \sum_{i\mbox{(or)}j}\rE(|x_{ij}|^\ell)\le \begin{cases}bn(\delta_n\sqrt{n})^{\ell-3}&\ \mbox{ for all }
			\ell\ge 3\cr
			p\mbox{ or } n&\ \mbox{for }\ell=1,2.\cr\end{cases}\nonumber \eqa
		Here, the third conclusion of assertion (1) in (\ref{addtnal}) follows from 
		\bqn
		1-\rE|x_{ijn}^2|=\rE|x_{ij}^2|I(|x_{ij}|\ge\delta_n\sqrt{n})\le \psi^{-1}(\delta_n\sqrt{n})\rE|x_{ij}^2|\psi(|x_{ij}|)\to 0.
		\eqn
		%by assumption (c).
		As for conclusion (2) in (\ref{addtnal}), it needs assumption (b) of Theorem \ref{th7221}, \bqn
		&&\sum_{i=1}^p\rE(|x_{ij}|^\ell)=\sum_{i=1}^p\ell \int_0^{\delta_n\sqrt{n}}x_{\ell-1}{\rP}(|x_{ij}|>x)\mathrm{d}x\le Kp\ell\int_0^{\delta_n\sqrt{n}}x^{\ell-1}{\rP}(|X|>x)\mathrm{d}x\\
		&=&Kp\rE|X|^\ell(|X|\le \delta_n\sqrt{n}),
		\eqn
		and the routine approach.  Similar to the other assertion. Then Lemma \ref{lemma5.11} can be proved by the same lines as those of Theorem 5.10 of  \cite{bai2010spectral}, with noticing the sufficiency of Lemma B.25 of  \cite{bai2010spectral} in the latter remaining hold after truncation and centralization.
	\end{proof}
	%Finally, let $\breve\bbB_n$ be the matrix constructed as $\bbS_n$ as $x_{ij}$ replaced by $\breve x_{ij}=\tilde x_{ij}/\sigma_{ij}$ where $\sigma_{ij}^2=\rE|\breve x_{ij}|^2$.
	
	Now, we turn to the preliminary for the proof of Theorem \ref{th7221}. First, we define $ \hat{x}_{ij}=x_{ij} I (\lvert x_{ij}\rvert <C) -\bbE x_{ij}  I (\lvert x_{ij}\rvert <C) $   and $ \hat{y}_{ij}=x_{ij} I (\lvert x_{ij}\rvert \geq C) -\bbE x_{ij}  I (\lvert x_{ij}\rvert \geq C) $  for some constant $ C $ and define  
	$ \hat{\bbX}= (\hat{x}_{ij} )_{p\times n} $, $ \bbY = \bbX - \hat{\bbX} $ and
	$$ \hat{\bbB}_n = \dfrac{1}{n}(\bbR_n + \bbT^{1/2}_n \hat{\bbX})(\bbR_n + \bbT^{1/2}_n \hat{\bbX})^*.  $$
	By Lemma \ref{lemma5.11} and Lemma \ref{lemma2.5}, we have
	\begin{align*}
	\max_{i\leq p} \lvert \lambda_i (\bbB_n) - \lambda_i(\hat{\bbB}_n)\rvert  \leq  \dfrac{1}{\sqrt{n}}\lVert \bbT^{1/2}_n \bbY \rVert  
	\leq \sqrt{\lVert \bbT_n \rVert} (1 + \sqrt{y}) \sqrt{\bbE \lvert \bbX^2 \rvert I( \lvert \bbX \rvert  \geq  C)},
	\end{align*}
	which can be arbitrarily small when $ C $ is large.
	
	Choosing $ \left[ a', b' \right]   $ and $ (c', d') $ such that $ c < c' < a' < a < b < b' < d' < d $. Select $ C $ large enough such that 
	$ \sqrt{\lVert \bbT_n \rVert}  (1+\sqrt{y}) \sqrt{\bbE \lvert X^2 \rvert I (\lvert X \rvert \geq C) }$ is smaller than the smallest gap among 
	$ c < c' < a' < a < b < b' < d' < d $. 
	The intervals $ \left[ a', b' \right]   $ and $ (c', d') $ satisfy the conditions of Theorem 
	$ \ref{th7221} $  for the matrix $ \hat{\bbB}_n $.
	If we proved that no eigenvalues of $ \hat{\bbB}_n  $ are falling into the interval 
	$ [a', b'] $, then there will be no eigenvalues of $ \bbB_n $ falling in $ [a, b] $.
	Therefore, we may prove the Theorem $ \ref{th7221} $ under the additional assumption
	that the random variables are uniformly bounded.
	
	Similar to \cite{bai2012no}, we need to establish an estimate like Theorem 1.2 in  \cite{bai2012no}.  Then we will be devoted to proving the following.
	
	\begin{theorem}\label{th1.2}
		Let $ z=x+iv_n $ with $ v_n = n^{- \delta} $, where $ \delta > 0 $. Then for some small but constant 
		$ \delta >0$, 
		\begin{align}  \label{7443}
		\sup_{x\in\left[ a, b \right]  } n v_n \lvert s_n (x + iv_n) - s^0_n (x + i v_n) \rvert  \stackrel{a.s.}{\longrightarrow} 0, 
		\end{align} 
		where $ s^0_n(z) $ is the solution to $ \eqref{712.1} $ with $ (y, H) $ replaced by 
		$ (y_n, H_n) $.
	\end{theorem}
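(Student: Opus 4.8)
The plan is to follow the strategy of \cite{bai2012no}, splitting the difference into a \emph{random part} and a \emph{deterministic} (bias) part,
\[
s_n(z)-s^0_n(z)=\bigl(s_n(z)-\bbE s_n(z)\bigr)+\bigl(\bbE s_n(z)-s^0_n(z)\bigr),
\]
and proving that each term, multiplied by $nv_n$, converges to $0$ almost surely, uniformly for $x\in[a,b]$. By the truncation and centralization carried out in Section~2 we may assume the $x_{ij}$ are uniformly bounded; moreover, by Lemma~\ref{lemma5.11} and assumptions (e)--(f) the operator norm $\|\bbB_n\|$ is almost surely bounded for all large $n$, which together with $\|(\bbB_n-z\bbI)^{-1}\|\le v_n^{-1}$ controls all resolvent quantities appearing below.

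\emph{Random part (Section~3).} Fix $x\in[a,b]$, write $s_n(z)=\tfrac1p\rtr(\bbB_n-z\bbI)^{-1}$, and decompose $s_n-\bbE s_n=\sum_{j=1}^n(\bbE_j-\bbE_{j-1})\tfrac1p\rtr(\bbB_n-z\bbI)^{-1}$, where $\bbE_j$ conditions on the first $j$ columns of $\bbR_n+\bbT^{1/2}_n\bbX_n$. Each martingale difference is controlled by a rank-one perturbation bound together with the moment inequalities for the quadratic forms $\bbr_j^*M\bbr_j$, $\bbr_j^*M\bbT^{1/2}_n\bbx_j$ and $\bbx_j^*\bbT^{1/2}_nM\bbT^{1/2}_n\bbx_j$ collected in Section~6, $M$ being an appropriate random resolvent. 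Applying Burkholder's inequality to the $2q$-th moment gives a first bound $\bbE|s_n-\bbE s_n|^{2q}\le C_q(nv_n^2)^{-q}$, and a more careful accounting---systematically converting factors $v_n^{-1}$ into $\Im$-parts of resolvent traces via $\rtr\bigl((\bbB_n-z\bbI)^{-1}(\bbB_n-\bar z\bbI)^{-1}\bigr)=v_n^{-1}\Im\rtr(\bbB_n-z\bbI)^{-1}$---improves it to $\bbE\bigl|nv_n(s_n-\bbE s_n)\bigr|^{2q}\le C_q n^{-\kappa q}$ for some $\kappa=\kappa(\delta)>0$, provided $\delta$ is small enough. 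Markov's inequality and Borel--Cantelli then yield $nv_n\bigl(s_n(x+iv_n)-\bbE s_n(x+iv_n)\bigr)\to0$ almost surely for each fixed $x$. Since $|s_n'(z)|\le v_n^{-2}$, the map $x\mapsto nv_n s_n(x+iv_n)$ and its derivative are polynomially bounded in $n$, so covering $[a,b]$ by a net of spacing $n^{-L}$ with $L$ large and applying Borel--Cantelli once more upgrades this to uniform convergence on $[a,b]$.

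\emph{Deterministic part (Section~4).} Here one shows that $\bbE s_n(z)$ satisfies the finite-$n$ system obtained from \eqref{712.1}--\eqref{713} by replacing $(y,H)$ with $(y_n,H_n)$, up to an error $\epsilon_n(x)$ with $\sup_{x\in[a,b]}nv_n|\epsilon_n(x)|\to0$. This is the standard resolvent expansion: writing $\bbB_n-z\bbI=\tfrac1n\sum_{j=1}^n(\bbr_j+\bbT^{1/2}_n\bbx_j)(\bbr_j+\bbT^{1/2}_n\bbx_j)^*-z\bbI$, isolating the $j$-th column, applying the Sherman--Morrison identity, replacing the resulting quadratic forms by their conditional expectations at a cost controlled by Section~6 and by the random-part estimate, and then taking expectations; the assumption in (f) that $\bbT_n$ commutes with $\tfrac1n\bbR_n\bbR_n^*$ keeps the system closed in the two scalar unknowns $\underline{s}$ and $\underline{g}$. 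Finally, because $[a,b]\subset(c,d)\subset S^c_{\underline{F}^{y_n,H_n}}$ with $c>0$ for all large $n$, the map defining this system is non-degenerate uniformly in $n$ and in $x\in[a,b]$ (its relevant Jacobian is bounded away from $0$), so a perturbation/implicit-function argument turns the discrepancy $\epsilon_n(x)=o\bigl(1/(nv_n)\bigr)$ in the equation into $\sup_{x\in[a,b]}nv_n|\bbE s_n(z)-s^0_n(z)|\to0$, which completes the proof.

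The main obstacle is attaining the rate $o\bigl(1/(nv_n)\bigr)$ in the presence of the information matrix $\bbR_n$: the quadratic forms $(\bbr_j+\bbT^{1/2}_n\bbx_j)^*M(\bbr_j+\bbT^{1/2}_n\bbx_j)$ are not centered, so one must keep the deterministic cross term $\bbr_j^*M\bbr_j$ to high precision and show the random fluctuation around it is genuinely of smaller order than $1/(nv_n)$; this forces a delicate balance between the truncation level, the finite fourth-moment assumption and the exponent $\delta$, and it is precisely here that the crude bound $\|(\bbB_n-z\bbI)^{-1}\|\le v_n^{-1}$ is too lossy and must be replaced by the $\Im$-part estimates mentioned above. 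By comparison, the passage from the pointwise estimate to uniformity on $[a,b]$, although necessary, is routine.
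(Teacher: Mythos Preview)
Your overall architecture is right and matches the paper: split $s_n-s^0_n$ into a random part $s_n-\bbE s_n$ and a bias $\bbE s_n-s^0_n$, handle the first by a martingale/Burkholder argument over columns, and handle the second by a resolvent expansion leading back to the two-equation system in $(\underline s,\underline g)$, using the commutativity in (f) to keep the system closed and the gap condition in (a) to invert the linearization. The net argument for uniformity is also the same.

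The genuine gap is in how you get from the crude random-part bound to the sharp one. Your ``more careful accounting'' relies on the identity $\rtr\bigl((\bbB_n-z\bbI)^{-1}(\bbB_n-\bar z\bbI)^{-1}\bigr)=v_n^{-1}\Im\rtr(\bbB_n-z\bbI)^{-1}$, but this only trades $v_n^{-2}$ for $v_n^{-1}\Im s_n$, which is useless unless you already know $\Im s_n(x+iv_n)$ is small on $[a,b]$---and that is essentially the statement that few sample eigenvalues fall near $[a,b]$, i.e.\ part of what you are proving. The paper breaks this circularity by a two-stage bootstrap: it first proves an ``(a)'' bound $\bbE|s_n-\bbE s_n|^{2\ell}\le K_\ell v_n^{-6\ell}n^{-2\ell}$ valid for all $x$, uses it through Lemma~\ref{lemma724} (via Lemma~\ref{lemma3.1}) to conclude $F_n([a',b'])=o(v_n^4)$ on a slightly larger interval $[a',b']$, and only then proves the ``(b)'' bound $\bbE|s_n-\bbE s_n|^{2\ell}\le K_\ell n^{-2\ell}$ on $[a,b]$ by splitting every trace like $\tfrac1n\rtr\bbT_n\bbD_{nk}^{-1}\bbT_n\overline{\bbD}_{nk}^{-1}$ into eigenvalues inside $[a',b']$ (few, each contributing $O(v_n^{-2})$) and outside (many, each contributing $O(1)$). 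Without this intermediate step you cannot remove the $v_n$-powers, and your claimed improvement to $\bbE|nv_n(s_n-\bbE s_n)|^{2q}\le C_q n^{-\kappa q}$ does not follow from the $\Im$-trick alone.

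The same issue propagates to the bias: the paper's $O(n^{-1})$ estimate in Section~4 for each of the error terms $e_1,\dots,e_6$ invokes Lemmas~\ref{lemma725}--\ref{lemma730} (boundedness of $\bbQ^{-1}$, of $1/\beta_k$ in moment, and of $\bbr_k^*\bbD_{nk}^{-1}\overline{\bbD}_{nk}^{-1}\bbr_k/|\breve\beta_k|^2$), all of which in turn rest on the ``(a)'' bounds and the smallness of $F_n([a',b'])$. So your deterministic-part sketch also needs this bootstrap input to reach $o(1/(nv_n))$.
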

	
	As shown in \cite{bai1998no,bai2012no}, Theorem $ \ref{th1.2} $ can prove Theorem $ \ref{th7221} $. 
	Let $ [a^{'} , b^{'} ] \subset (c, d) $ for which $ a^{'}< a $, $b^{'} > b $. Then from
	Theorem \ref{th1.2}, it is straightforward to argue (more details will be presented in Section 5)
	\begin{align*}
	&\sup_{x\in[a,b]} \left| \int \dfrac{I([a^{'}, b^{'}]^c)\mathrm{d}(F^{\bbB_n}(\lambda) - F^{y_n,H_n}(\lambda))}{	((x - \lambda)^2 + v_n^2 )((x - \lambda)^2 + 2v_n^2) \cdots ((x - \lambda)^2 + kv_n^2)} \right. \\ & \left.
	+ \sum_{ \lambda_j \in [a',b'] }\dfrac{v_n^{2k}}{ ((x - \lambda)^2 + v_n^2 )((x - \lambda)^2 + 2v_n^2) \cdots ((x - \lambda)^2 + kv_n^2)}     \right|=o(1)	,  \ \mathrm{a.s.} 
	\end{align*}  
	where the $\lambda_j $ are the eigenvalues of $ \bbB_n $.  Since the integral converges a.s. to zero, one
	can argue, by contradiction that there can be no eigenvalues of $ \bbB_n $ in $ [a, b] $
	for all $ n $ large.
	
	We will prove Theorem \ref{th1.2} in Sections 3-5. Before the proof, some lemmas needed in the proof are listed next, and Section 6 contains the mathematical tools in the previous sections.

	%Similar to Bai and Silverstein $ \cite{bai2012no} $, we need to establish an estimate like Theorem 1.2 in $ \cite{bai2012no} $. Then we will be devoted to proving the following.
	%Similar to Chapter 6, we need to establish an estimate like (6.2.50), i.e.,

	\begin{lemma} \label{lemma722}
		Under the conditions of Theorem 2.1 in \cite{zhou2022limiting}, when $ y \leq 1 $, the real part of $ 1 + yg(z) $ is always positive.
	\end{lemma}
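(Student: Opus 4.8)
My plan is to reduce everything to a statement about $\underline g$. By the relation $\underline g(z)=-1/\bigl(z(1+yg(z))\bigr)$ recorded just before \eqref{713}, one has $1+yg(z)=-1/\bigl(z\underline g(z)\bigr)$, so $\Re\bigl(1+yg(z)\bigr)$ and $-\Re\bigl(z\underline g(z)\bigr)$ have the same sign; thus it suffices to show $\Re\bigl(z\underline g(z)\bigr)<0$ for every $z\in\mathbb C^{+}$. I will use the facts (established in \cite{zhou2023analysis}) that, for $z\in\mathbb C^{+}$, $\Im\underline g(z)>0$ and $\underline g$ is the Stieltjes transform of a finite nonnegative measure $G$ carried by $[0,\infty)$, so that with $z=x+iv$,
\[
\Re\bigl(z\underline g(z)\bigr)=x\Re\underline g(z)-v\Im\underline g(z)=\int\frac{x\lambda-|z|^{2}}{|\lambda-z|^{2}}\,\mathrm dG(\lambda).
\]
Since $\Re\bigl(1+yg(z)\bigr)\to1$ as $z\to i\infty$ and $\mathbb C^{+}$ is connected, a continuity argument reduces the lemma to showing that $\Re\bigl(z\underline g\bigr)$ never vanishes on $\mathbb C^{+}$.

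First I would dispose of $x=\Re z\le 0$: then $x\lambda-|z|^{2}\le-|z|^{2}<0$ for all $\lambda\in\operatorname{supp}G\subseteq[0,\infty)$, so $\Re\bigl(z\underline g(z)\bigr)<0$ outright, with no hypothesis on $y$. The remaining and essential case is $x>0$, and this is where $y\le 1$ must enter.

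For $x>0$ I would argue by contradiction. Suppose $\Re\bigl(1+yg(z_{0})\bigr)=0$ at some $z_{0}=x_{0}+iv_{0}\in\mathbb C^{+}$, necessarily with $x_{0}>0$ by the previous step. Write $1+yg(z_{0})=i\tau$, $\tau\in\mathbb R\setminus\{0\}$; then $g(z_{0})=(i\tau-1)/y$ so $\Re g(z_{0})=-1/y$, and $z_{0}\underline g(z_{0})=-1/(i\tau)=i/\tau$ is purely imaginary. Matching imaginary parts with the integral form of $\underline g$ gives $1/\tau=\int v_{0}\lambda\,|\lambda-z_{0}|^{-2}\,\mathrm dG>0$, hence $\tau>0$, and matching real parts gives $\Re\underline g(z_{0})=(v_{0}/x_{0})\Im\underline g(z_{0})>0$. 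I would then substitute these into the system \eqref{712.1} (equivalently \eqref{713}) at $z_{0}$: the $g$-equation becomes $\int t\,\mathrm dH(u,t)/\delta(u,t)=z_{0}(1-i\tau)/y$ with $\delta(u,t)=1+u\underline g(z_{0})+t\underline s(z_{0})$, while the $s$-equation, together with $\underline s(z_{0})=-(1-y)/z_{0}+y\,s(z_{0})$ and $\Im s(z_{0})>0$, pins down the signs of $\Re\delta$ and $\Im\delta$. Because $y\le 1$, the atom of $\underline F$ at the origin has nonnegative mass $1-y$, so $\Re\bigl(z_{0}\underline s(z_{0})\bigr)\le y\,\Re\bigl(z_{0}s(z_{0})\bigr)$; the aim is to combine this with $\Re\underline g(z_{0})>0$, $|\Re g(z_{0})|=1/y\ge 1$, and the fact that $s,\underline s$ are Stieltjes transforms of probability measures on $[0,\infty)$ to force a contradiction in the real part of the $g$-equation. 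That would yield $\Re\bigl(1+yg\bigr)\neq 0$ throughout $\mathbb C^{+}$ and finish the proof via connectedness.

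The hard part will be this last step. Individually, the identities among $s,\underline s,g,\underline g$ at $z_{0}$ are mutually consistent — they are just rearrangements of \eqref{712.1} — so the contradiction has to be extracted from the positivity built into the measures involved and from the sign of the atom mass $1-y$, which is exactly where $y\le 1$ is used. I expect the genuinely delicate situation to be $z_{0}$ close to $S_{F}$, where crude estimates on $\int(x_{0}-\lambda)\,|\lambda-z_{0}|^{-2}\,\mathrm dG$ are inadequate; controlling it will rely on the detailed analytic description of $s$, $\underline s$, $\underline g$ and of the supports $S_{F}$, $S_{\underline F}$ obtained in \cite{zhou2022limiting,zhou2023analysis}.
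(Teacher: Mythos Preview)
Your reduction to $\Re\bigl(z\underline g(z)\bigr)<0$ and the treatment of $x\le 0$ are reasonable (assuming the representation of $\underline g$ as the Stieltjes transform of a nonnegative measure on $[0,\infty)$ is indeed available from \cite{zhou2023analysis}; the paper itself argues $x\le 0$ more simply via $\Re g_n(z)\ge 0$). But the $x>0$ case in your proposal is not a proof: you set up the hypothetical $1+yg(z_0)=i\tau$, record a few sign consequences, and then say only that ``the aim is to combine this\ldots to force a contradiction,'' deferring the crux to unspecified structural facts from the references. Nothing you have written actually produces a contradiction, and the route you sketch (playing off $\Re\underline g(z_0)>0$ and $|\Re g(z_0)|=1/y$ against the $g$-equation) does not obviously close.

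The paper's argument for $x>0$ is short and computational, and it shows exactly where $y\le 1$ enters. Assuming $1+y\Re g(z_0)=0$ at $z_0=x+iv\in\mathbb C^+$ with $x>0$, take the real parts of \emph{both} equations in \eqref{712.1}. Because the real part of $(1+yg)^{-1}$ vanishes, the $u/(1+yg)$ term contributes nothing to the real part, and one obtains two linear relations in $s_1=\Re s$, $s_2=\Im s$:
\[
s_1=-B_0x-yB_1(xs_1-vs_2)+B_1(1-y),\qquad -\tfrac{1}{y}=-B_1x-yB_2(xs_1-vs_2)+B_2(1-y),
\]
where $B_j=\int t^{j}\,\bigl|\,u/(1+yg)-(1+yst)z+t(1-y)\,\bigr|^{-2}\,\mathrm dH(u,t)$. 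Eliminating $s_1$ and solving for $s_2$ gives
\[
s_2=\frac{-\bigl(\tfrac{1}{y}+B_2(1-y)\bigr)+x^2y\bigl(B_1^2-B_0B_2\bigr)}{vyB_2}.
\]
Cauchy--Schwarz yields $B_1^2\le B_0B_2$, and $y\le 1$ makes $\tfrac{1}{y}+B_2(1-y)>0$; hence $s_2<0$, contradicting $\Im s(z_0)\ge 0$. That is the entire argument. The key idea you are missing is to use \emph{both} equations of \eqref{712.1} simultaneously and exploit the Cauchy--Schwarz relation among the moment-like integrals $B_0,B_1,B_2$; your approach through $\underline g$ alone does not give access to this structure.
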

	\begin{proof}
		Write $ s(z) = s_1 + is_2 $ and $ g = g_1 + ig_2 $, where $ s_1  $ ,$ s_2 $, $ g_1 $ and $ g_2 $ are both real.
		When $ x \leq 0 $ and $ z = x + iv  $ with $ v \geq 0 $, $ \Re g_n(z) \geq 0 $, as a limit, $ g_1(z) \geq  0 $, hence $ 1 + yg_1 > 0 $. 
		Therefore, we only need to show the lemma for $ \Re z > 0 $.
		When $ x = \Re z \to \infty  $ and $ v =\Im z \geq 0 $ fixed, 
		one can easily show that $ g(z) \to 0 $ and hence $ 1+yg_1(z) \to 1 > 0 $ for all large $ x $. 
		Therefore, there exists a constant $ \mu> 0 $ such that when $ x > \mu $, 
		$ 1 + yg_1(z) > 0 $. 
		Since $ g_1(z) $ is a continuous function of $ x $, if the lemma is untrue, then there exists a $ x > 0 $ and a $ v \leq 0 $ such that $ 1 + yg_1(z) = 0 $. 
		We will show the lemma by deriving a contradiction to this assumption.
		
		Comparing the real parts of two sides of the two equations of $ \eqref{712.1} $, 
		we obtain
		$$  s_1 = -B_0 x - yB_1(xs_1 - vs_2) + B_1(1 - y),   $$
		$$  -\dfrac{1}{y}= -B_1x - yB_2(xs_1 - vs_2) + B_2(1 - y),  $$
		where
		$$  B_j = \int  \frac{t^j \mathrm{d}H(u, t)}{\left|   \dfrac{u}{1+yg(z)}- (1 + ys(z)t)z + t(1 - y)\right|  ^2} , j = 0, 1, 2.  $$
		
		From the two equations, we obtain
		\begin{align*}
		s_2 &= \frac{( \dfrac{1}{y} - xB_1 + B_2(1 -y))(1 + xyB_1) - xyB_2(-xB_0 + B_1(1 - y))}{-vyB_2(1 + xyB_1) + vxy^2 B_1 B_2}\\
		&= \frac{-( \dfrac{1}{y} + B_2(1 - y)) + x^2 y B^2_1 - x^2 y B_2 B_0}{vy B_2 }.
		\end{align*}
		We get a contradictory by Cauchy-Schwarz inequality $ B^2_1 \leq B_0 B_1  $ and $ -( \dfrac{1}{y}+ B_2(1 -y)) < 0$.  
		The proof of the lemma is complete.
	\end{proof}
	
	\begin{lemma} \label{lemma723}
		Under the conditions of Theorem 2.1 in \cite{zhou2022limiting}, for each large $ n$, the density of 
		$ F_0^{y_n , H_n}$, is bounded by $ Kx^{-1/2 }$ for some constant $ K $. 
		Hence, $ F_0^{y_n , H_n}$ satisfies the Lipschitz condition with index 
		$ \frac{1}{2} $.
	\end{lemma}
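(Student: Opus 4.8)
The plan is to express the density of $F_0^{y_n,H_n}$ at any $x>0$ as $\frac1\pi\Im s_n^0(x)$, where $s_n^0(x)=\lim_{v\downarrow0}s_n^0(x+iv)$ is the boundary value of the solution of \eqref{712.1} with $(y,H)$ replaced by $(y_n,H_n)$; existence of this limit and continuity of the density on $\mathbb R^+$ are precisely the properties borrowed from \cite{zhou2023analysis}. By the identities $\underline s=-\frac{1-y_n}{z}+y_ns$ and $\underline g=-\frac1{z(1+y_ng)}$ this density also equals $\frac1{\pi y_n}\Im\underline s_n^0(x)$. Since $\Im s_n^0$ is continuous on $(0,\infty)$, $S_{F_0^{y_n,H_n}}\subset[0,M]$ with $M$ independent of large $n$ by (e)--(f), and the density is $0$ on $(M,\infty)$, it is enough to establish the bound $Kx^{-1/2}$ on a fixed interval $(0,\eta]$, uniformly over all large $n$; on $[\eta,M]$ a uniform constant bound --- hence $Kx^{-1/2}$ after enlarging $K$ --- comes from continuity, compactness and the convergences $y_n\to y$, $H_n\to H$.

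For $x$ near $0$ I would proceed as in the proof of Lemma \ref{lemma722}. Put $s=s_n^0=s_1+is_2$, $g=g_n^0=g_1+ig_2$ (so $s_2,g_2\ge0$), $D=D(u,t,z)=\frac{u}{1+y_ng}-(1+y_nst)z+t(1-y_n)$, $A_j=\int\frac{ut^{j}\,\mathrm{d}H_n(u,t)}{|D|^{2}}$ and $B_j=\int\frac{t^{j}\,\mathrm{d}H_n(u,t)}{|D|^{2}}$. Inserting $z=x+iv$ with $v>0$ into \eqref{712.1} and comparing imaginary parts (together with the analogous identities from \eqref{713} for $\underline s,\underline g$) gives the linear system
\begin{align*}
s_2&=\frac{y_ng_2}{|1+y_ng|^{2}}A_0+v\big(B_0+y_ns_1B_1\big)+xy_ns_2B_1,\\
g_2&=\frac{y_ng_2}{|1+y_ng|^{2}}A_1+v\big(B_1+y_ns_1B_2\big)+xy_ns_2B_2.
\end{align*}
The subtlety is that as $v\downarrow0$ the coefficients $A_j,B_j$ blow up exactly on the support, so the $v$-terms cannot simply be discarded --- they have nonzero finite limits that produce the density --- and one must instead estimate every coefficient and then pass carefully to the limit; the determinant of the system is kept under control by the Cauchy--Schwarz inequality $B_1^{2}\le B_0B_2$ used in Lemma \ref{lemma722}. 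The key structural input that produces the exponent $-\frac12$ is that $|1+y_ng|$ is bounded away from $0$: by Lemma \ref{lemma722} when $y_n\le1$, and by the analogous estimate when $y_n>1$ (cf. \cite{zhou2023analysis}). Consequently $|\underline g(x)|=\dfrac1{x\,|1+y_ng(x)|}$ is of exact order $x^{-1}$ as $x\downarrow0$; substituting this order-$x^{-1}$ quantity into the second equation of \eqref{713}, the relation for $\underline s$ near $0$ becomes, to leading order, quadratic, and isolating the square root yields $\Im\underline s(x)\le Cx^{-1/2}$, hence density $\le Kx^{-1/2}$. Assumptions (e)--(f) are used to bound $\int u\,\mathrm{d}H_n$ and $\int t^{-1}\,\mathrm{d}H_n=\lambda_{-1,n}$ uniformly (and to keep $|D|^{2}$ from degenerating in the wrong direction), and $y_n\to y$, $H_n\to H$ then make all constants uniform in large $n$, including the borderline $y=1$, where the left edges of $S_{F_0^{y_n,H_n}}$ may themselves tend to $0$.

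Finally, since $F_0^{y_n,H_n}$ has no atom on $(0,\infty)$, the density bound gives, for $0\le a\le b$,
\[
F_0^{y_n,H_n}(b)-F_0^{y_n,H_n}(a)\le\int_a^b Kt^{-1/2}\,\mathrm{d}t=2K\big(\sqrt b-\sqrt a\big)\le 2K\sqrt{b-a},
\]
using $\sqrt b-\sqrt a\le\sqrt{b-a}$ for $0\le a\le b$; this is the claimed Lipschitz (Hölder) condition with index $\frac12$ on $(0,\infty)$ away from a possible atom at the origin. I expect the main obstacle to be the near-zero analysis of the second paragraph: the imaginary-part relations are not self-contained (their $v=0$ form is homogeneous and only reproduces the boundary of the support), so one has to work at $v>0$, control several integral coefficients simultaneously through Cauchy--Schwarz and the uniform spectral and moment bounds of (e)--(f), take the limit $v\downarrow0$ carefully, and --- the truly delicate point --- make the whole estimate uniform over all large $n$, which is exactly where $y_n\to y$, $H_n\to H$ and the uniform bounds on $\|\bbT_n\|$, $\lambda_{-1,n}$ and $\|\bbR_n/\sqrt n\|$ are indispensable.
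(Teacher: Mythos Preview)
Your plan contains the right ingredients --- boundary values from \cite{zhou2023analysis}, imaginary-part identities, Cauchy--Schwarz, and the moment $\lambda_{-1}=\int t^{-1}\,\mathrm dH_n$ --- but the route you propose is both longer and vaguer than necessary, and the decisive step (``the relation for $\underline s$ near $0$ becomes, to leading order, quadratic'') is not justified: the integrals in \eqref{713} are not polynomial in $\underline s$, and no quadratic structure is apparent for general $H_n$. Your worry about having to work at $v>0$ and pass to the limit is also misplaced. Since $s_n^0(x)=\lim_{v\downarrow0}s_n^0(x+iv)$ exists, one can take the imaginary part of the \emph{first} equation of \eqref{712.1} directly at $z=x$; there is no linear system to solve and no region-splitting is needed.

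The paper's argument is essentially two lines. At $z=x$ with $s_n^0(x)=s_1+is_2$, $g_n^0(x)=g_1+ig_2$ and $D=\dfrac{u}{1+y_ng_n^0}-(1+y_ns_n^0 t)x+t(1-y_n)$, the imaginary part of the first equation of \eqref{712.1} reads
\[
s_2=\frac{y_n g_2}{|1+y_ng_n^0|^{2}}\int\frac{u\,\mathrm dH_n}{|D|^{2}}+y_n x\, s_2\int\frac{t\,\mathrm dH_n}{|D|^{2}}.
\]
If $s_2>0$, drop the first (nonnegative) term and cancel $s_2$ to obtain $y_nx\displaystyle\int\frac{t\,\mathrm dH_n}{|D|^{2}}<1$. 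Then apply Cauchy--Schwarz to the first equation itself:
\[
|s_n^0(x)|=\Bigl|\int\frac{\mathrm dH_n}{D}\Bigr|\le\Bigl(\int t^{-1}\,\mathrm dH_n\Bigr)^{1/2}\Bigl(\int\frac{t\,\mathrm dH_n}{|D|^{2}}\Bigr)^{1/2}\le\sqrt{\lambda_{-1}}\cdot\frac1{\sqrt{y_nx}}.
\]
This gives the density bound $Kx^{-1/2}$ for \emph{every} $x>0$ at once --- no near-zero/away-from-zero split, no use of $\underline g$ or of the second equation, and the uniformity in $n$ is automatic because only $\lambda_{-1}$ and $y_n$ enter. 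Your H\"older computation at the end coincides with the paper's.
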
	
	\begin{proof}
		By the inversion formula $ \eqref{1.3} $, the density of $ F_0^{y_n , H_n}$ equals 
		$ \pi ^{-1} \Im s^0_n(x)  $. 
		Therefore, the first assertion follows by estimating the bound of $ s^0_n(x) $. 
		When $ \Im s^0_n(x) = 0 $, the density is $ 0 $, it is surely bounded. 
		We only need to consider the case where its imaginary part is positive. 
		Note that $ s^0_n $ satisfies the first equation in $ \eqref{712.1} $, taking its imaginary part we obtain
		\begin{align*}
		\Im s^0_n(x) =&\int \frac{\frac{u\Im(g^0_n(x))}{ \lvert 1+y_n g^0_n \rvert^2} \mathrm{d} H_n(u, t)}{ \left\lvert  \dfrac{u}{1+y_n g^0_n}- (1 + y_n st)x + t(1 - y_n)\right\rvert ^2}\\
		&+ \int \frac{y_n x t \Im(s^0_n(x)) \mathrm{d} H_n(u, t)}{\left \lvert  \dfrac{u}{1+y_n g^0_n}- (1 + y_n st)x + t(1 - y_n)\right\rvert ^2}.
		\end{align*}
		Dropping the non-negative first term and then eliminating $ \Im(s^0_n(x)) $, we obtain
		\begin{align}\label{7444}
		1 > \int \frac{ y_n x t \mathrm{d} H_n(u, t)}{ \left\lvert  \dfrac{u}{1+yg^0_n}- (1 + y_n st)x + t(1 - y_n)\right\rvert ^2}.
		\end{align}
		
		On the other hand, by the first equation of $ \eqref{712.1} $ and the Cauchy-Schwarz inequality,
		we obtain
		\begin{align*}
		\lvert s^0_n(x) \rvert &\leq \left(  \int \dfrac{1}{t} \mathrm{d} H_n(u, t)  \int \frac{ t \mathrm{d} H_n(u, t) }{\left\lvert  \dfrac{u}{1+yg^0_n}- (1 + y_n st)x + t(1 - y_n)\right\rvert ^2}\right) ^{1/2}\leq \lambda_{-1} \dfrac{1}{\sqrt{y_n x}}.
		\end{align*}
		Hence, the first assertion of the lemma is proved. 
		The second assertion is a natural consequence of the first. 
		In fact, for any $ 0 \leq a < b < \infty $, we have
		\begin{align*}
		F_0^{y_n, H_n} (b) - F_0^{y_n, H_n}  (a) &= \int_{a}^{b} f_0^{y_n, H_n}(t) \mathrm{d} t \leq  K \int_{a}^{b} t^{-1/2} \mathrm{d} t\\
		&\leq 2K(\sqrt{b}-\sqrt{a}) = 2K \dfrac{b -a }{\sqrt{b}+\sqrt{a}} \leq  2K \lvert b - a\rvert ^{1/2}.
		\end{align*}
		The proof is complete.
	\end{proof}
	
	\begin{lemma}\label{lemma724}
		Suppose $ z = x + iv_n  $ and $ v_n \geq  n^{-1/10} $,
		$$  \sup_{x\in \left[-A,A \right]} \lvert s_n(z) - s^0_n(z) \rvert \leq \sqrt{v_n},  $$
		where $ A = v_n^{-2} $. 
		Then  
		\begin{align}\label{7445}
		F_0^{y_n,H_n} (\left[ a, b \right]) \leq \sqrt{v_n},
		\end{align}
		for all large $ n $.
	\end{lemma}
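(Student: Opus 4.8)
The quantity to be bounded, $F_0^{y_n,H_n}([a,b])$, is deterministic, so the idea is to trade it against the \emph{random} Stieltjes transform $s_n$ on the line $\Im z=v_n$, where the hypothesis supplies the control, and then to show $\Im s_n$ cannot carry much mass over $[a,b]$ because $[a,b]$ sits in a spectral gap. For any probability distribution function $G$ with Stieltjes transform $m_G$ and any $v>0$,
\[
\frac1\pi\int_a^b\Im m_G(x+iv)\,\mathrm dx=\int\Big(\tfrac1\pi\arctan\tfrac{b-\lambda}{v}-\tfrac1\pi\arctan\tfrac{a-\lambda}{v}\Big)\mathrm dG(\lambda)\ \ge\ \tfrac1\pi\arctan\tfrac{b-a}{v}\cdot G([a,b]),
\]
so, since $\arctan\frac{b-a}{v_n}\to\pi/2$, for all large $n$ we get $G([a,b])\le\frac3\pi\int_a^b\Im m_G(x+iv_n)\,\mathrm dx$ (Lemma~\ref{lemma723}, which gives $F_0^{y_n,H_n}$ a bounded Lipschitz-$\tfrac12$ density off the origin, lets one replace $\tfrac3\pi$ by $\tfrac1\pi+o(1)$ at the cost of an additive $O(v_n)$, which is what makes the constant come out cleanly). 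Applying this with $G=F_0^{y_n,H_n}$, $m_G=s_n^0$, using $[a,b]\subset[-A,A]$ for $n$ large (as $A=v_n^{-2}\to\infty$), and the hypothesis $|\Im s_n^0-\Im s_n|\le|s_n^0-s_n|\le\sqrt{v_n}$ on $[a,b]$, one obtains
\[
F_0^{y_n,H_n}([a,b])\ \le\ \frac3\pi\int_a^b\Im s_n(x+iv_n)\,\mathrm dx\ +\ \frac{3(b-a)}\pi\sqrt{v_n}.
\]

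Next I localise $\int_a^b\Im s_n(x+iv_n)\,\mathrm dx$. Fix $a',b'$ with $c<a'<a\le b<b'<d$ and put $\eta=\min(a-a',\,b'-b)>0$. With $\lambda_1,\dots,\lambda_p$ the eigenvalues of $\bbB_n$,
\[
\int_a^b\Im s_n(x+iv_n)\,\mathrm dx=\frac1p\sum_{i=1}^p\Big(\arctan\tfrac{b-\lambda_i}{v_n}-\arctan\tfrac{a-\lambda_i}{v_n}\Big)\ \le\ \pi\,F^{\bbB_n}([a',b'])+\frac{(b-a)v_n}{\eta^2},
\]
since an eigenvalue inside $[a',b']$ contributes at most $\pi$ while one outside contributes at most $(b-a)v_n/\mathrm{dist}(\lambda_i,[a,b])^2\le(b-a)v_n/\eta^2$, and $F^{\bbB_n}(\mathbb R)=1$.

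It remains to bound the gap mass $F^{\bbB_n}([a',b'])$, and this is the crux. Running the Poisson inequality of the first paragraph with $G=F^{\bbB_n}$ on $[a',b']$ and then the hypothesis on $[a',b']\subset[-A,A]$,
\[
F^{\bbB_n}([a',b'])\ \le\ \frac3\pi\int_{a'}^{b'}\Im s_n(x+iv_n)\,\mathrm dx\ \le\ \frac3\pi\Big(\int_{a'}^{b'}\Im s_n^0(x+iv_n)\,\mathrm dx+(b'-a')\sqrt{v_n}\Big).
\]
By assumption~(a) and $c>0$, the support of $\underline F^{y_n,H_n}$, hence of $F_0^{y_n,H_n}$, is disjoint from $(c,d)$, and by Lemmas~\ref{lemma5.11}--\ref{lemma723} together with the norm bounds in (e)--(f) it lies in $[0,c]\cup[d,\kappa]$ for a fixed $\kappa$; therefore $\mathrm{dist}([a',b'],S_{F_0^{y_n,H_n}})\ge\rho:=\min(a'-c,\,d-b')>0$, uniformly in $n$, so for $x\in[a',b']$
\[
\Im s_n^0(x+iv_n)=\int\frac{v_n\,\mathrm dF_0^{y_n,H_n}(\lambda)}{(\lambda-x)^2+v_n^2}\ \le\ \frac{v_n}{\rho^2}.
\]
Hence $F^{\bbB_n}([a',b'])\le\frac{3(b'-a')}\pi\big(v_n/\rho^2+\sqrt{v_n}\big)=O(\sqrt{v_n})$.

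Assembling the three displays gives $F_0^{y_n,H_n}([a,b])\le C\sqrt{v_n}+O(v_n)$ for an explicit $C$; since $a',b'$ may be taken arbitrarily close to $a,b$, the Poisson constants shrink to $1+o(1)$ via Lemma~\ref{lemma723}, and — in fact, since (a) forces $F_0^{y_n,H_n}([a,b])=0$ outright — any $o(1)$ bound a fortiori gives $F_0^{y_n,H_n}([a,b])\le\sqrt{v_n}$ for all large $n$. The one genuinely substantive point is the gap-mass estimate of the third paragraph: one must \emph{not} bound $F^{\bbB_n}([a',b'])$ directly (that would presuppose the conclusion of Theorem~\ref{th7221}) but instead extract its smallness from the hypothesis combined with the a priori bound $\Im s_n^0=O(v_n)$ on $[a',b']$, which is precisely where assumption~(a) enters; the remaining constant bookkeeping is routine, and the value of the lemma lies in the Stieltjes-transform route itself, which the sequel reuses.
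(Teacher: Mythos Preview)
Your argument is correct, and you rightly note at the end that the conclusion as literally stated is trivial under assumption~(a): since $(c,d)\subset S^c_{\underline F^{y_n,H_n}}$ and $c>0$, the interval $[a,b]$ is outside the support of $F_0^{y_n,H_n}$, so $F_0^{y_n,H_n}([a,b])=0$. The substantive content is the bound $F^{\bbB_n}([a',b'])=O(\sqrt{v_n})$ that you extract in the third paragraph, which is exactly what the ``(b)'' estimates in Section~3 rely on. One minor quibble: the parenthetical remark that taking $a',b'$ close to $a,b$ makes the constants shrink to $1+o(1)$ is wrong, since then $\eta=\min(a-a',b'-b)\to 0$ and the term $(b-a)v_n/\eta^2$ blows up; but this is inessential, as you yourself concede. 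The appeal to Lemmas~\ref{lemma5.11}--\ref{lemma723} for an upper bound $\kappa$ on the support is also unnecessary---the pointwise bound $\Im s_n^0(x+iv_n)\le v_n/\rho^2$ needs only the distance $\rho$ from $[a',b']$ to the support, not compactness.

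The paper takes a different route. It invokes Lemma~\ref{lemma3.1} (Theorem~B.14 of \cite{bai2010spectral}), a general inequality that bounds the Kolmogorov distance $\|F-G\|$ by three pieces: the integral of $|f-g|$ over $[-A,A]$, a tail term over $|x|>B$, and a modulus-of-continuity term for $G$. Taking $A=v_n^{-2}$ and $B=v_n^{-1.5}$, the hypothesis controls the first piece, Markov's inequality on $\tfrac1p\mathrm{tr}\,\bbB_n$ gives the tail bounds~\eqref{7446}--\eqref{7447}, and Lemma~\ref{lemma723}'s Lipschitz-$\tfrac12$ estimate for $F_0^{y_n,H_n}$ handles the modulus term. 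This delivers the stronger conclusion $\|F^{\bbB_n}-F_0^{y_n,H_n}\|=O(\sqrt{v_n})$, from which the interval-mass bound is immediate. Your Poisson-kernel argument is more elementary and self-contained---no appeal to the Bai--Silverstein machinery---but it is tailored to the specific gap, whereas the paper's approach gives uniform Kolmogorov control in one stroke.
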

	\begin{proof}
		We shall use Lemma $ \ref{lemma3.1} $ to prove this lemma. 
		Choose $ B = v_n^{-1.5} $.
		Then, with probability one, for all large $ n $, 
		\begin{align}\label{7446}
		F_n(\left[-B, B\right]^c) \leq  B^{-1} \dfrac{1}{p} \mathrm{tr} \bbB_n \leq  KB^{-1}
		\end{align}
		for some absolute constant $ K $. Similarly, we can prove
		\begin{align}\label{7447}
		F_0^{y_n,H_n}(\left[-B, B\right]^c) \leq  KB^{-1}
		\end{align}
		since $ F_n \stackrel{\mathcal{D}}{\longrightarrow} F_0^{y,H} $ and 
		$ F_0^{y_n,H_n} \stackrel{\mathcal{D}}{\longrightarrow}  F_0^{y,H}. $
		
		By the selection of $ A $ and $ B $, the parameter $ \kappa $ satisfies $ \eqref{B210} $. Applying Lemma $ \ref{lemma3.1} $, the lemma follows.
	\end{proof}
	
	\begin{lemma}\label{lemma725}
		Under the conditions of Theorem $ \ref{th7221} $, the spectral norm of
		$ (\bbK - z\bbI)^{-1} $ is bounded for all $ x \in \left[a, b \right] $ and $ z = x + iv_n $, where
		$$  \bbK = \frac{n^{-1}\bbR_n \bbR_n^*}{1 + y_n  \bbE g_n(z) } - z \bbE \underline{s}_n(z)\bbT_n. $$
	\end{lemma}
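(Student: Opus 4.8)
The plan is to exploit the commutativity hypothesis (f) to diagonalise $\bbK$ and thereby reduce the claim to an $n$-uniform lower bound on the moduli of the eigenvalues of $\bbK - z\bbI$; this lower bound is the information-plus-noise analogue of the Silverstein--Choi ``separation from the support'' estimate, and it is the real content of the lemma.

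\emph{Reduction.} Since $n^{-1}\bbR_n\bbR_n^*$ and $\bbT_n$ are Hermitian and commute, every complex linear combination of them is a normal matrix, so $\bbK - z\bbI$ is normal. Writing $u_1,\dots,u_p$ and $t_1,\dots,t_p$ for the paired eigenvalues of $n^{-1}\bbR_n\bbR_n^*$ and $\bbT_n$ in their joint spectral decomposition, the eigenvalues of $\bbK - z\bbI$ are
\[
D_n(u_i,t_i;z):=\frac{u_i}{1+y_n\bbE g_n(z)}-z\,\bbE\underline{s}_n(z)\,t_i-z ,
\]
so $\|(\bbK-z\bbI)^{-1}\| = \big(\min_{i\le p}|D_n(u_i,t_i;z)|\big)^{-1}$. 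It therefore suffices to exhibit $\delta>0$, independent of $n$ and of $x\in[a,b]$, with $|D_n(u,t;x+iv_n)|\ge\delta$ for every $(u,t)\in S_{H_n}$ and all large $n$.

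\emph{Deterministic separation.} Let $D^0_n(u,t;z):=\frac{u}{1+y_ng^0_n(z)}-z\underline{s}^0_n(z)t-z$ be the same quantity formed from the deterministic solutions $g^0_n,\underline{s}^0_n$ of \eqref{712.1} with $(y,H)$ replaced by $(y_n,H_n)$; using $\underline{s}^0_n = -(1-y_n)/z + y_n s^0_n$ one checks that $D^0_n(u,t;z)=-z\big(1+u\underline{g}^0_n(z)+t\underline{s}^0_n(z)\big)$, i.e.\ exactly the denominator appearing in \eqref{712.1}--\eqref{713}. For $x\in(c,d)\subset S^c_{\underline{F}^{y_n,H_n}}$ the functions $s^0_n,g^0_n,\underline{s}^0_n$ extend continuously and real-valued across $x$ by the analytic results of \cite{zhou2023analysis}, they are uniformly bounded on $[a,b]$ (the bound $|s^0_n(x)|\le\lambda_{-1}(y_nx)^{-1/2}$ of Lemma~\ref{lemma723}), and $1+y_ng^0_n(z)$ is bounded away from $0$ by Lemma~\ref{lemma722} together with the a priori bounds in (e),(f); moreover the inverse-function characterisation of $S^c_{\underline{F}^{y_n,H_n}}$ says precisely that $1+u\underline{g}^0_n(x)+t\underline{s}^0_n(x)\ne0$ for all $(u,t)\in S_{H_n}$. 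Since $[a,b]$ sits strictly inside the open gap $(c,d)$ valid for all large $n$ by (a), since $u,t$ range over fixed bounded sets by (e),(f), and since $|z|\ge a>0$, a compactness-plus-continuity argument (first at $v=0$, then for small $v_n$) upgrades this to an $n$-uniform bound $|D^0_n(u,t;x+iv_n)|\ge\delta_0>0$.

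\emph{Transfer, conclusion, and the hard point.} From the deterministic-equivalent estimates underlying the LSD theory (the bounds developed in Section~3, resp.\ in \cite{zhou2022limiting,zhou2023analysis}) one has $|\bbE g_n(z)-g^0_n(z)|+|\bbE\underline{s}_n(z)-\underline{s}^0_n(z)|=o(1)$ uniformly for $x\in[a,b]$ and $z=x+iv_n$ with $v_n=n^{-\delta}$; combined with the lower bound on $|1+y_ng^0_n|$ (hence on $|1+y_n\bbE g_n|$ for $n$ large) and the boundedness of $u,t,|z|$, this gives $|D_n(u,t;z)-D^0_n(u,t;z)|=o(1)$ uniformly, so $|D_n(u,t;z)|\ge\delta_0/2$ for all large $n$, and by the Reduction step $\|(\bbK-z\bbI)^{-1}\|\le2/\delta_0$. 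I expect the main obstacle to be the Deterministic separation step: turning the qualitative hypothesis ``$[a,b]\subset S^c_{\underline{F}^{y_n,H_n}}$'' into an $n$-uniform lower bound for $|1+u\underline{g}^0_n+t\underline{s}^0_n|$ over $S_{H_n}$, which needs the inverse-function description of the support for this model (its derivative being positive exactly off the support) and a careful extraction of uniformity in $n$ from (a),(e),(f). One must also order the arguments so that this lemma is not invoked circularly inside the very deterministic-equivalent analysis used in the Transfer step.
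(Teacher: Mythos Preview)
Your outline is the paper's own argument: diagonalise via commutativity, reduce to a lower bound on $|1+u_i\underline g^0_n(x)+t_i\underline s^0_n(x)|$ uniformly over $i\le p$, $x\in[a,b]$ and $n$, then transfer from $(\underline s^0_n,\underline g^0_n)$ to $(\bbE\underline s_n,\bbE\underline g_n)$ by the $o(1)$ deterministic-equivalent estimate.

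The one place where your write-up is genuinely incomplete is the step you yourself flag. A bare ``compactness-plus-continuity'' argument does \emph{not} give the $n$-uniform lower bound: if $(u_{n_k},t_{n_k})\in S_{H_{n_k}}$ converges to some $(u_*,t_*)$, there is no reason $(u_*,t_*)\in S_H$ (weak convergence $H_n\Rightarrow H$ does not imply support convergence), so you cannot conclude $1+u_*\underline g_0(x_*)+t_*\underline s_0(x_*)\neq 0$ from the support characterisation at the limit. The paper's mechanism is exactly the derivative tool you allude to, made quantitative: enlarge to $[a'',b'']=[a-\varepsilon_2,b+\varepsilon_2]\subset(c,d)$ and $[a',b']=[a-\varepsilon_1,b+\varepsilon_1]$; by Theorem~4 of \cite{zhou2023analysis} the map $x\mapsto u_i\underline g^0_n(x)+t_i\underline s^0_n(x)$ is increasing on $[a'',b'']$, and tightness of $\{\underline F^{y_n,H_n}\}$ gives an $n$-uniform lower bound $(\underline s^0_n)'(x)\ge m>0$ (hence also $(\underline g^0_n)'(x)\ge m$) there. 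Monotonicity forces the minimum of $|u_i\underline g^0_n(x)+t_i\underline s^0_n(x)+1|$ on $[a'',b'']$ to occur at an endpoint, and integrating the derivative over $[a'',a']$ (or $[b',b'']$) then yields $|u_i\underline g^0_n(x)+t_i\underline s^0_n(x)+1|\ge (\varepsilon_2-\varepsilon_1)m\cdot c>0$ on $[a',b']$, uniformly in $i$ and $n$. With this in hand your Transfer step and Reduction finish the proof exactly as you wrote; there is no circularity, since the $o(1)$ bound $|\bbE\underline s_n-\underline s^0_n|+|\bbE\underline g_n-\underline g^0_n|\to 0$ used here is the crude estimate from the LSD paper and does not require the present lemma.
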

	\begin{proof}
		Since $ \left[a, b \right] $ is a subinterval of $ (c, d) \subset S_{\underline{F}_n} $
		for all large $ n $, there exists $ 0 < \varepsilon_1 < \varepsilon_2  $ such that $ \left[a'', b'' \right] = [a -\varepsilon_2, b + \varepsilon_2] \subset (c, d)$. 
		Write $ (\underline{s}_0(x), \underline{g}_0(x)) $ be the extended solution, i.e., limits of regular solutions for $z\in\mathbb C^+$, to \eqref{713}. 
		By part (b) of Theorem 4 in \cite{zhou2023analysis},
		for each
		$ x \in  \left[a'', b'' \right]$, we have
		$$ \inf \{ \lvert u\underline{g}_0(x) + t \underline{s}_0(x) + 1 \rvert : (u, t) \in S_H \} > 0 $$
		and consequently
		$$ \inf_{x\in \left[a'', b'' \right] } \{ \lvert u\underline{g}_0(x) + t \underline{s}_0(x) + 1 \rvert : (u, t) \in S_H \} > 0 .$$
		
		Since $ (y_n, H_n) \to (y, H) $, for $ n\to \infty $, there exist $ \underline{s}^0_n $ and $\underline{g}^0_n$ which are extended solutions to the equation \eqref{713} with $ (y, H) $ replaced by $ (y_n, H_n) $
		and
		\begin{align}\label{7448}
		\delta_n := \inf_{x\in \left[a', b' \right] } \min_{i \leq p} \{ \vert u_i \underline{g}^0_n(x) + t_i \underline{s}^0_n(x) + 1 \rvert : (u_i , t_i) \in  S_{H_n} \} > 0,
		\end{align}
		where $ \left[a', b' \right] = \left[a - \varepsilon_1, b - \varepsilon_1 \right] $.
		
		We claim that there is a positive lower bound for $ \delta_n \geq \delta > 0 $ for all large $ n $. 
		Convert $ (\underline{s}^0_n , \underline{g}^0_n ) $ to $ (s^0_n, g^0_n)  $ which satisfies \eqref{712.1}. 
		Since the integrands of the two integrals are uniformly bounded, by DCT, we conclude that $ s^0_n(z) \to  s_0(z) $ and $ g^0_n(z) \to  g_0(z) $ for all $ z \in \mathbb{C}^+ $. Thus, $ F^{y_n,H_n} \to F $ weakly.
		Therefore, $ \{F^{y_n,H_n} \} $ is tight and so is $ \{ \underline{F}^{y_n,H_n} \}  $. Thus, there is a constant $ B $ such that $ \underline{F}^{y_n,H_n}(\left[ B, \infty \right)) < 1/3$. 
		Consequently, for $ x \in \left[a'', b'' \right]  $
		\begin{align}\label{7449}
		(\underline{s}^0_n)'(x) = \int \frac{d\underline{F}^{y_n,H_n}(\lambda)}{(\lambda-x)^2}  \geq \int_{0}^{B}  \frac{dF^{y_n,H_n}(\lambda)}{(\lambda-x)^2}  \geq  \dfrac{2 }{3B^2} \geq  m > 0. 
		\end{align}
		It is proved in Theorem 4 of \cite{zhou2023analysis} that $  \underline{g}' = (1 + {\underline{s}}'c B_2)/({\underline{g}}^{-2} - c A_2)  $, so we have $ \underline{g}'_n(x) \geq m $ for some constant $ m > 0 $.
		
		By Theorem 4 in \cite{zhou2023analysis}, for each $ n = 0 $ or large and every supporting point $ (u_i, t_i) $ of $ H_n $, the function $ u_i \underline{g}_n(x) + t_i \underline{s}_n(x) $ is increasing and continuous.
		Therefore, the infimum of 
		$ \vert u_i \underline{g}_n(x) + t_i \underline{s}_n(x) + 1 \rvert  $ for $ x \in [a'', b'']  $ reaches at $ x = a'' $ or $ b''$, say at $ a'' $ and $ u_i \underline{g}_n(x) + t_i \underline{s}_n(x) + 1 > 0 $. 
		Then, for any $ x \in [a', b']  $, we have
		\begin{align}\label{7450}
		&u_i \underline{g}_n(x) + t_i \underline{s}_n(x) + 1 \geq u_i \underline{g}_n(a') + t_i \underline{s}_n(a') + 1 \\ \nonumber
		\geq & u_i \underline{g}_n(a'') + t_i \underline{s}_n(a'') + 1 + \int_{a'}^{a''} (u_i \underline{g}'_n(x) + t_i \underline{s}'_n(x) )	\\ \nonumber
		\geq & 0 + (a' - a'') m \varDelta = (\varepsilon_2- \varepsilon_1) m \varDelta := \delta > 0. 
		\end{align}
		
		If the infimum reaches are $ b''$, 
		then $ u_i \underline{g}_n(x) + t_i \underline{s}_n(x) + 1 < 0 $. 
		One can similarly prove $ \eqref{7450} $, namely,
		$$  u_i \underline{g}_n(x) + t_i \underline{s}_n(x) + 1 \leq -\delta < 0 .  $$
		The assertion is proved.
		
		For any $ z = x + i v_n $, $ x \in [a', b'] $ ,we have
		$$  \lvert \bbE \underline{s}_n(z) - \underline{s}_0(x) \rvert \leq  \lvert \bbE \underline{s}_n(z) - \underline{s}^0_n(x) \rvert  + \lvert  \underline{s}^0_n(z) - \underline{s}_0(x) \rvert \to 0,  $$ 
		$$  \lvert \bbE \underline{g}_n(z) - \underline{g}_0(x) \rvert \leq  \lvert \bbE \underline{g}_n(z) - \underline{g}^0_n(x) \rvert  + \lvert  \underline{g}^0_n(z) - \underline{g}_0(x) \rvert \to 0. $$
		So, we will have
		$$  \min_{i\leq p} \inf_{x \in [a', b'] }  \vert u_i \underline{g}_n(x) + t_i \underline{s}_n(x) + 1 \rvert  > \delta + o(1).  $$
		Hence, we have
		\begin{align}\label{7451}
		\|(\bbK - z I)^{-1} \| \leq  K. 
		\end{align}
	\end{proof}
	
	\begin{lemma}\label{lemma726}
		Under the conditions of Theorem \ref{th7221}, for any bounded non-random vector 
		$ \bbu $,
		$$\sup_{x\in [a,b] } \lvert \bbu^*(\bbB_n - zI)^{-1} \bbu - \bbu^*(\bbK - zI)^{-1} \bbu \rvert  \to  0, \ a.s. $$
		and hence when $ x\in [a,b] $, $ \bbu^*(\bbB_n - zI)^{-1} \bbu $ are uniformly bounded with probability one.
		
		Especially, the conclusion is true for $ \bbu = \dfrac{1}{\sqrt{n}}\bbr_k, k = 1, 2, \dots , p. $
	\end{lemma}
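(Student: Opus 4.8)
The plan is to follow the scheme used for the corresponding resolvent estimates in \cite{bai1998no,bai2012no}: split
\[
\bbu^*(\bbB_n-zI)^{-1}\bbu-\bbu^*(\bbK-zI)^{-1}\bbu=\Big(\bbu^*(\bbB_n-zI)^{-1}\bbu-\bbE\,\bbu^*(\bbB_n-zI)^{-1}\bbu\Big)+\Big(\bbE\,\bbu^*(\bbB_n-zI)^{-1}\bbu-\bbu^*(\bbK-zI)^{-1}\bbu\Big),
\]
treat the first (``random'') part by a martingale argument, the second (``bias'') part by a leave-one-out resolvent expansion together with Lemma \ref{lemma725}, and finally upgrade the resulting pointwise (in $x$) a.s.\ bounds to a uniform one by discretising $[a,b]$. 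Throughout we use the reduction of Section 2, so $|x_{ij}|\le C$; writing $\bba_j=\bbr_j+\bbT_n^{1/2}\bbx_j$ for the $j$-th column of $\bbR_n+\bbT_n^{1/2}\bbX_n$, so that $\bbB_n=\frac1n\sum_{j=1}^n\bba_j\bba_j^*$, assumptions (e)--(f) and $|x_{ij}|\le C$ give $\sup_j\frac1n\|\bba_j\|^2\le K$, we have the a priori bound $\|(\bbB_n-zI)^{-1}\|\le v_n^{-1}=n^{\delta}$, and by Lemma \ref{lemma725} $\|(\bbK-zI)^{-1}\|\le K$ for $x\in[a,b]$.

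For the random part I would introduce $\bbE_j=\bbE(\cdot\mid\bba_1,\dots,\bba_j)$ and $\bbB_{(j)}=\bbB_n-\frac1n\bba_j\bba_j^*$. Since $(\bbE_j-\bbE_{j-1})\bbu^*(\bbB_{(j)}-zI)^{-1}\bbu=0$, the deviation is the martingale difference sum $\sum_{j=1}^n(\bbE_j-\bbE_{j-1})\gamma_j$ with
\[
\gamma_j=\bbu^*\big[(\bbB_n-zI)^{-1}-(\bbB_{(j)}-zI)^{-1}\big]\bbu=-\beta_j\,\tfrac1n\big(\bbu^*(\bbB_{(j)}-zI)^{-1}\bba_j\big)\big(\bba_j^*(\bbB_{(j)}-zI)^{-1}\bbu\big),\qquad \beta_j=\Big(1+\tfrac1n\bba_j^*(\bbB_{(j)}-zI)^{-1}\bba_j\Big)^{-1}.
\]
Here $|\beta_j|$ is bounded with high probability: $1+\frac1n\bba_j^*(\bbB_{(j)}-zI)^{-1}\bba_j$ is, after splitting $\bba_j$ into $\bbr_j$ and $\bbT_n^{1/2}\bbx_j$ and applying the quadratic-form lemmas of Section 6, close to $1+u_i\underline g_n^0(x)+t_i\underline s_n^0(x)$, whose modulus is bounded below by the $\delta$ produced in Lemma \ref{lemma725}. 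Using this, $\beta_j\bba_j^*(\bbB_{(j)}-zI)^{-1}=\bba_j^*(\bbB_n-zI)^{-1}$, $\frac1n\|\bba_j\|^2\le K$, the rank-one structure, and the Section 6 moment bounds for quadratic forms (controlled via assumptions (b)--(c), the $\psi$-factor), Burkholder's inequality yields $\bbE|\bbu^*(\bbB_n-zI)^{-1}\bbu-\bbE\,\bbu^*(\bbB_n-zI)^{-1}\bbu|^{2q}$ polynomially small in $n$; taking $q$ large and $\delta$ small and invoking Borel--Cantelli gives a.s.\ convergence at each fixed $x$. To make this uniform on $[a,b]$ I would run the estimate at the $O(n^{A})$ points of a mesh-$n^{-A}$ grid (for $A$ large) and control the oscillation between grid points by the Lipschitz bounds $|\partial_x\,\bbu^*(\bbB_n-zI)^{-1}\bbu|\le\|\bbu\|^2v_n^{-2}$ and $|\partial_x\,\bbu^*(\bbK-zI)^{-1}\bbu|\le K'$ (the latter because $\bbK$ depends on $x$ only through $z,\bbE g_n(z),\bbE\underline s_n(z)$, whose $x$-derivatives are polynomially bounded and which are multiplied by the bounded resolvent $(\bbK-zI)^{-1}$).

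For the bias part, start from $I=(\bbB_n-zI)(\bbB_n-zI)^{-1}=\frac1n\sum_j\bba_j\bba_j^*(\bbB_n-zI)^{-1}-z(\bbB_n-zI)^{-1}$, replace $\bba_j^*(\bbB_n-zI)^{-1}$ by $\beta_j\bba_j^*(\bbB_{(j)}-zI)^{-1}$, take expectations, and then approximate $\beta_j$ by a deterministic scalar built from $\bbE g_n(z)$, $\bbE\underline s_n(z)$ and the $\bbr_j$-dependent piece, and $\bbE[\bba_j\bba_j^*(\bbB_{(j)}-zI)^{-1}]$ by $(\bbr_j\bbr_j^*+\bbT_n)\,\bbE(\bbB_{(j)}-zI)^{-1}$ (using $\bbE\bbx_j=0$, $\bbE\bbx_j\bbx_j^*=I$, the quadratic-form concentration of Section 6, where the vanishing complex second moment enters), and finally replacing $\bbB_{(j)}$ by $\bbB_n$ (rank one). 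Collecting terms gives $I=(\bbK-zI)\,\bbE(\bbB_n-zI)^{-1}+\Delta_n$, where $\Delta_n$ is a finite sum of the above approximation errors, each shown to be $o(1)$ in spectral norm uniformly on $[a,b]$ by the Section 6 lemmas (trace concentration, $\bbE|\text{quadratic form}-\text{trace}|^{2q}$, rank-one perturbation) and $v_n=n^{-\delta}$. Left-multiplying by $(\bbK-zI)^{-1}$, bounded by Lemma \ref{lemma725}, gives $\bbE(\bbB_n-zI)^{-1}=(\bbK-zI)^{-1}-(\bbK-zI)^{-1}\Delta_n$, hence $\sup_{x\in[a,b]}|\bbE\,\bbu^*(\bbB_n-zI)^{-1}\bbu-\bbu^*(\bbK-zI)^{-1}\bbu|\le K\|\bbu\|^2\|\Delta_n\|=o(1)$. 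Combining the two parts proves the displayed limit; the uniform boundedness of $\bbu^*(\bbB_n-zI)^{-1}\bbu$ then follows from this limit and $\sup_{x\in[a,b]}|\bbu^*(\bbK-zI)^{-1}\bbu|\le K\|\bbu\|^2$. Finally, the case $\bbu=\frac1{\sqrt n}\bbr_k$ is covered at once because $\|\frac1{\sqrt n}\bbr_k\|\le\|\frac1{\sqrt n}\bbR_n\|\le K$ by assumption (e), so $\frac1{\sqrt n}\bbr_k$ is among the bounded non-random vectors already handled.

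The main obstacle is the bias step: one must carry out the leave-one-out expansion so that every term of $\Delta_n$ is genuinely $o(1)$ even though the resolvents $(\bbB_n-zI)^{-1}$ and $(\bbB_{(j)}-zI)^{-1}$ are controlled a priori only by the diverging quantity $v_n^{-1}=n^{\delta}$, and this has to be done in the presence of the extra cross-terms $\bbr_j\bbx_j^*\bbT_n^{1/2}$, $\bbT_n^{1/2}\bbx_j\bbr_j^*$ coming from the information-plus-noise structure and with the two coupled auxiliary quantities $\bbE g_n(z)$, $\bbE\underline s_n(z)$ entering $\bbK$ (rather than the single one occurring for separable sample covariance matrices). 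It is precisely the need to absorb these powers of $v_n^{-1}$, both here and in the Burkholder estimates, that forces $\delta$ to be taken small --- the same ``small but constant $\delta>0$'' as in Theorem \ref{th1.2}.
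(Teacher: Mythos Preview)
Your overall architecture --- random/bias split, martingale plus Burkholder for the random part, leave-one-out resolvent expansion for the bias, grid discretisation for uniformity --- is the same as the paper's sketched proof (``multiply by $\bbu^*$ and $\bbu$ and follow the lines of the Stieltjes-transform approximation'', i.e.\ Sections 3--4).

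There is, however, one concrete mis-step. You claim that $1+\tfrac1n\bba_j^*(\bbB_{(j)}-zI)^{-1}\bba_j$ is close to $1+u_i\underline g_n^0(x)+t_i\underline s_n^0(x)$ and hence bounded away from zero by Lemma \ref{lemma725}. After the split $\bba_j=\bbr_j+\bbT_n^{1/2}\bbx_j$, the $\bbx_j$-quadratic form does concentrate at $y_n g_n(z)$, but the $\bbr_j$-piece is $\tfrac1n\bbr_j^*(\bbB_{(j)}-zI)^{-1}\bbr_j$, which has no reason to equal $u_j\underline g_n^0(x)$: $\bbr_j$ is a \emph{column} of $\bbR_n$, not an eigenvector of $\bbR_n\bbR_n^*/n$, and no quadratic-form lemma turns one into the other. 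In fact the paper's eventual bound on $1/\beta_k$ (Lemma \ref{lemma729}) is proved \emph{using} the present Lemma \ref{lemma726}, so invoking it here would be circular. The fix is easy: use the crude bound $|\beta_j|\le |z|/v_n$, which follows from $\Im\big(z(1+\tfrac1n\bba_j^*(\bbB_{(j)}-zI)^{-1}\bba_j)\big)\ge v_n$ since $\bbB_{(j)}$ is nonnegative definite; this is exactly what the paper does for its ``$(a)$'' estimates in Section 3 (see the line before \eqref{7458} and the bound \eqref{7457}). The extra powers of $v_n^{-1}=n^{\delta}$ are absorbed by choosing $\delta$ small.

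A second, smaller point: for the bias part you aim for $\|\Delta_n\|=o(1)$ in operator norm and then sandwich. This is stronger than needed, and controlling the operator norm of a sum over $j$ of rank-one error matrices is not automatic. It is cleaner, and closer to what the paper actually does, to sandwich by $\bbu^*$ and $\bbu$ \emph{before} carrying out the expansion, so that every error term is already a scalar to which the Section 6 moment lemmas apply directly.
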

	\begin{proof}
		The proof of the limit can be done by multiplying $ \bbu^* $ from left and $ \bbu $
		from right to the equation 
		$ (\bbK - z\bbI)^{-1} - (\bbB_n - z\bbI)^{-1} $
		and following similar lines as proving the approximation of the Stieltjes transform. 
		The uniformly boundedness of $ \bbu^*(\bbB_n - zI)^{-1} \bbu $ follows from Lemma \ref{lemma725} and the fact that $ \bbu $ is bounded.
	\end{proof}
	
	\begin{definition} \label{def727}
		Random variables $ x_n $ and $ y_n $ are said to be similar in moment denoted as 
		$ x_n  \stackrel{m}{\simeq } y_n $ if for any integer $ \ell \geq  1 $, $ \bbE 
		\vert x_n  - y_n \rvert ^{2\ell} \leq  K_{\ell} n^{-2\ell} v_n^{-6\ell} $.
		If $ y_n $ is non-random and bounded, we say that $ x_n $ is bounded in moment, and denoted as $ x_n \stackrel{m}{<} \infty $. 
		If $ x_n - y_n $ can be written as a sum of a non-negative constant and a random variable which is similar in moment with 0, then we say $ x_n $ is larger than or equal to $ y_n $ in moment, denoted as $ x_n  \stackrel{m}{\geq} y_n  $.
	\end{definition}
	
	\begin{remark} \label{remark728}
		If a random variable is bounded in moment, then it can be basically treated as bounded when computing the expectation of the product of it with a nonnegative random variable $ w $ which is bounded by $ v_n^{-\nu }$
		for some constant $ \nu $, in fact, for any $ \varepsilon > 0 $, its expectation of the product is less than
		$ (\mu_n + \varepsilon)\bbE w + v_n^{-\nu} \bbP(\lvert \iota_n \rvert > \varepsilon) = (\mu_n + \varepsilon)\bbE w + o(1) $, if $ v_n \geq n^{-1/10 } $ and 
		$ \ell $ is chosen large enough.
	\end{remark}
	
	\begin{lemma}\label{lemma729}
		Under the conditions of Theorem \ref{th7221}, for all $ z $ with $ x \in [a, b] $, the quantities $ 1/\beta_k $, $ 1/\breve{\beta}_k $ are uniformly bounded in moment.
	\end{lemma}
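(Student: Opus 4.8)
\emph{Proof proposal.} Recall that $\beta_k$ and $\breve{\beta}_k$ are the rank-one (Sherman--Morrison) coefficients obtained by deleting the $k$-th column $\mathbf{q}_k=\bbr_k+\bbT_n^{1/2}\bbx_k$ of $\bbR_n+\bbT_n^{1/2}\bbX_n$; writing $\bbB_{(k)}=\bbB_n-n^{-1}\mathbf{q}_k\mathbf{q}_k^*$ for the corresponding leave-one-out matrix, one has $1/\beta_k=1+n^{-1}\mathbf{q}_k^*(\bbB_{(k)}-z\bbI)^{-1}\mathbf{q}_k$, while $1/\breve{\beta}_k$ is the same expression with the noise quadratic form replaced by its conditional expectation given $\bbB_{(k)}$. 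The plan is to show that each of $1/\beta_k$ and $1/\breve{\beta}_k$ differs from a bounded non-random quantity by a term that is $\stackrel{m}{\simeq}0$ in the sense of Definition \ref{def727}, uniformly in $k$ and in $x\in[a,b]$.

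The first step is the a priori deterministic bound $|\beta_k|\le Kv_n^{-1}$, valid for all $n$ with $K$ absolute. Indeed, $\bbB_{(k)}$ is nonnegative definite and $\Re z=x\ge c>0$ on $[a,b]$, so $\Im(1/\beta_k)=n^{-1}v_n\|(\bbB_{(k)}-\bar z\bbI)^{-1}\mathbf{q}_k\|^2\ge 0$ and $\Re(1/\beta_k)\ge 1-(x/v_n)\,\Im(1/\beta_k)$; splitting according to whether $\Im(1/\beta_k)\le v_n/(2x)$ or not gives $|1/\beta_k|\ge 1/2$ in the first case and $|1/\beta_k|\ge v_n/(2x)$ in the second. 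This step keeps the whole argument non-circular: it uses only positivity and $x$ bounded away from $0$, never the (not yet proven) absence of eigenvalues of $\bbB_n$ in $[a,b]$, and it gives the only a priori control on $\|(\bbB_{(k)}-z\bbI)^{-1}\|$ available, namely the crude $\|(\bbB_{(k)}-z\bbI)^{-1}\|\le v_n^{-1}$.

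Next I would expand $\mathbf{q}_k=\bbr_k+\bbT_n^{1/2}\bbx_k$ and, using that $\bbx_k$ is independent of $\bbB_{(k)}$ with mean zero, unit variances and (if complex) vanishing second moments, write $1/\beta_k=\gamma_k+\xi_k$, where $\gamma_k=1+n^{-1}\bbr_k^*(\bbB_{(k)}-z\bbI)^{-1}\bbr_k+n^{-1}\tr\bbT_n(\bbB_{(k)}-z\bbI)^{-1}$ is the conditional mean given $\bbB_{(k)}$ (so $1/\breve{\beta}_k=\gamma_k$) and $\xi_k=2n^{-1}\Re(\bbr_k^*(\bbB_{(k)}-z\bbI)^{-1}\bbT_n^{1/2}\bbx_k)+n^{-1}(\bbx_k^*\bbT_n^{1/2}(\bbB_{(k)}-z\bbI)^{-1}\bbT_n^{1/2}\bbx_k-\tr\bbT_n(\bbB_{(k)}-z\bbI)^{-1})$ is the centered fluctuation. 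For $\gamma_k$ I would first pass from $\bbB_{(k)}$ to $\bbB_n$ (a rank-one correction whose size is controlled through $\beta_k$ using the bound of the previous step) and then apply Lemma \ref{lemma725} and Lemma \ref{lemma726}: since $\|(\bbK-z\bbI)^{-1}\|\le K$, since $n^{-1}\bbr_k^*(\bbB_n-z\bbI)^{-1}\bbr_k$ differs by a term $\stackrel{m}{\simeq}0$ from $n^{-1}\bbr_k^*(\bbK-z\bbI)^{-1}\bbr_k$, since $n^{-1}\tr\bbT_n(\bbB_n-z\bbI)^{-1}$ is, by the estimates of Sections 3--4, within $\stackrel{m}{\simeq}0$ of a bounded non-random number, and since $\|\bbT_n\|\le K$, $\|\bbr_k\|^2\le Kn$ by assumption (e) and $p\le Kn$ by assumption (d), we get that $\gamma_k$ (hence $1/\breve{\beta}_k$) differs from a bounded non-random quantity by a term $\stackrel{m}{\simeq}0$. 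For $\xi_k$, conditionally on $\bbB_{(k)}$ the first summand is a linear form and the second a centered quadratic form in the uniformly bounded independent entries of $\bbx_k$; a Rosenthal/Marcinkiewicz--Zygmund inequality for the former and the Bai--Silverstein quadratic-form moment inequality recorded in Section 6 for the latter, combined with the deterministic bounds $\|(\bbB_{(k)}-z\bbI)^{-1}\|\le v_n^{-1}$, $\|\bbT_n\|\le K$, $\|\bbr_k\|^2\le Kn$ and $p\le Kn$, give $\bbE|\xi_k|^{2\ell}\le K_\ell n^{-2\ell}v_n^{-6\ell}$ for each $\ell\ge1$ --- this is exactly where the exponent $6$ in Definition \ref{def727} and the regime $v_n=n^{-\delta}$ with $\delta$ small are used. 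Combining, $1/\beta_k=\gamma_k+\xi_k$ is within $\stackrel{m}{\simeq}0$ of a bounded non-random quantity, i.e. $1/\beta_k\stackrel{m}{<}\infty$; since every constant above is uniform in $k$ and $x\in[a,b]$, so is the conclusion.

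The main obstacle is precisely this last balancing. The sharp bounded-resolvent statements of Lemmas \ref{lemma725} and \ref{lemma726} hold only for quadratic forms $\bbu^*(\bbB_n-z\bbI)^{-1}\bbu$ with fixed bounded $\bbu$, so they may be used inside the conditional mean $\gamma_k$ but not to estimate the fluctuation $\xi_k$; in $\xi_k$ one must fall back on the crude $\|(\bbB_{(k)}-z\bbI)^{-1}\|\le v_n^{-1}$, and the crux is to verify that the extra powers of $v_n^{-1}$ this introduces still fit under the threshold $n^{-2\ell}v_n^{-6\ell}$. A secondary delicate point is the passage $\bbB_{(k)}\to\bbB_n$ inside $\gamma_k$, whose rank-one error involves $\beta_k$ and must therefore be estimated using only the a priori bound $|\beta_k|\le Kv_n^{-1}$; and one must ensure that the trace term $n^{-1}\tr\bbT_n(\bbB_{(k)}-z\bbI)^{-1}$ is handled by the deterministic-equivalent analysis of Sections 3--4 rather than by the vector-sandwich estimates of Lemma \ref{lemma726}, which do not directly apply to it.
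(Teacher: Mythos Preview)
Your argument proves the wrong inequality. In the paper's notation, $\beta_k=1+n^{-1}\alpha_k^*\bbD_{nk}^{-1}\alpha_k$ and $\breve{\beta}_k=1+n^{-1}\bbr_k^*\bbD_{nk}^{-1}\bbr_k+n^{-1}\tr\bbT_n\bbD_{nk}^{-1}$; the lemma asserts that the \emph{reciprocals} $1/\beta_k,\,1/\breve{\beta}_k$ are bounded in moment, i.e.\ that $\beta_k,\breve{\beta}_k$ stay away from zero. You have written ``$1/\beta_k=1+n^{-1}\mathbf{q}_k^*(\bbB_{(k)}-z\bbI)^{-1}\mathbf{q}_k$'', which swaps $\beta_k$ with its reciprocal, and then your whole programme --- show that this quantity differs by $\stackrel{m}{\simeq}0$ from a bounded non-random target --- establishes only that $\beta_k$ (paper's notation) is bounded \emph{above}. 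That is the easy direction and is not the content of the lemma. Even granting the charitable reading that you meant $\beta_k=1+\cdots$ and intended to conclude $1/\beta_k\stackrel{m}{<}\infty$, the step ``$\breve{\beta}_k$ is close to a bounded non-random quantity, hence $1/\breve{\beta}_k$ is bounded'' is a non sequitur: you would additionally need that the non-random target is bounded \emph{away from zero}, and nothing in your outline yields that (the target $1+n^{-1}\bbr_k^*(\bbK-z\bbI)^{-1}\bbr_k+y_n\bbE g_n$ involves a quadratic form in a non-definite resolvent and can in principle be near zero). A second, related gap is the passage $\bbB_{(k)}\to\bbB_n$ inside $\gamma_k$: with only the a priori bound $|1/\beta_k|\le Kv_n^{-1}$, the rank-one correction to $n^{-1}\bbr_k^*\bbD_{nk}^{-1}\bbr_k$ is of size $O(v_n^{-3})$, not $\stackrel{m}{\simeq}0$, so you cannot replace the leave-one-out quadratic form by the full one at this stage.

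The paper's proof avoids trying to bound $\breve{\beta}_k$ away from zero directly. Instead it splits into cases according to the angle between the two summands $1+y_n g_{nk}(z)$ and $n^{-1}\bbr_k^*\bbD_{nk}^{-1}\bbr_k$ of $\breve{\beta}_k$. In the dangerous case (angle $>90^\circ$ and the $\bbr_k$--term not small) it uses the structural identity
\[
n^{-1}\bbr_k^*\bbD_n^{-1}\bbr_k\ \stackrel{m}{=}\ \frac{n^{-1}\bbr_k^*\bbD_{nk}^{-1}\bbr_k\,(1+y_n g_n^0(z))}{\breve{\beta}_k},
\]
obtained from the Sherman--Morrison expansion, together with Lemma~\ref{lemma726} (which bounds the left-hand side) and the lower bound $|1+y_ng_n^0(z)|\ge\delta$, to read off an upper bound for $1/|\breve{\beta}_k|$. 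This identity is the missing idea in your proposal; without it, or something equivalent supplying a genuine lower bound on $|\breve{\beta}_k|$, the argument does not close.
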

	\begin{proof}
		By Theorem 1 in \cite{zhou2023analysis}, there is a constant $ \delta > 0 $ such that for all $ z \in \mathbb{C}^+ $ with $ x \in [a, b] $, $ \underline{g } $ is bounded from above and hence $ \lvert 1 + yg(z) \rvert > \delta > 0 $.
		Since $ F^{y_n, H_n} \to F^{y, H} $, we have $ g^0_n(z) \to g(z) $ uniformly on $ \mathbb{C}^+ $ and thus when $ n $ is large $ \lvert 1 + y_n g^0_n(z)\rvert > \delta  > 0 $, where $ \delta $ is an absolute constant, may
		take different value at different appearances. 
		Consequently, by Lemma \ref{lemma3.2},
		$ \lvert 1 + y_n g^0_{nk}(z)\rvert > \delta  > 0  $.
		
		At first, we point out that these bounds will be used for proving the $ ``b" $
		bounds and hence we may assume that the $ ``a" $ bounds are true and thus
		\begin{align}\label{7452}
		&\bbE \lvert s_n(z) - s^0_n(z) \rvert ^{2\ell } \leq  K_{\ell} n^{-2\ell} v^{-2\ell}_n , \\ \nonumber
		&\bbE \lvert g_n(z) - g^0_n(z) \rvert ^{2\ell } \leq  K_{\ell} n^{-2\ell} v^{-2\ell}_n,
		\end{align}
		if $ z = x + iv_n $ with $ v_n \geq n^{-1/8} $ and choosing $ \ell \geq 4 $.
		Here, the convergence is true uniformly for all $ x\in [e, f]$. 
		Using the notation given in Definition \ref{def727},
		we may say that $ (1 + yg_nk(z))^{-1} $ is uniformly bounded in moment.
		
		If the angle between the complex numbers $ n^{-1} \bbr^*_k (\bbB_{nk} - zI)^{-1} \bbr_k $ and 
		$ 1 + y_n g_{nk}(z) $ is less than or equal to $ 90^{\circ} $, then
		$$ \lvert \breve{\beta}_k \rvert  = \lvert 1 + y_n g_{nk}(z) + n^{-1} \bbr^*_k (\bbB_{nk} - z\bbI)^{-1} \bbr_k\rvert  \geq  \lvert 1 + y_n g_{nk}(z)\rvert  \geq \delta $$
		and thus $ 1/\breve{\beta}_k(x) $ is bounded from above.
		
		Now, assume that the angle between $ n^{-1} \bbr^*_k (\bbB_{nk} - z\bbI)^{-1} \bbr_k $ and $ 1 + y_n g_{nk}(z)  $ is larger than $ 90^{\circ} $. 
		If $ \lvert n^{-1} \bbr^*_k (\bbB_{nk} - z\bbI)^{-1} \bbr_k\rvert < \delta/2 $, then 
		$ \breve{\beta}_k \geq 1/2\delta  $ and thus
		$ 1/\breve{\beta}_k $ is bounded by $ 2/\delta  $ from above. 
		Now, we assume $ \lvert n^{-1} \bbr^*_k (\bbB_{nk} - z\bbI)^{-1} \bbr_k\rvert \geq \delta/2$. 
		By the formula
		\begin{align}\label{7453}
		&n^{-1}\bbr^*_k (\bbB_n - z\bbI)^{-1} \bbr_k  \\ \nonumber
		= &n^{-1}\bbr^*_k (\bbB_{nk} - z\bbI)^{-1} \bbr_k - \frac{n^{-2} \bbr^*_k (\bbB_{nk} - z\bbI)^{-1} \alpha \alpha^* (\bbB_{nk} - z\bbI)^{-1} \bbr_k}{\beta_k} \\ \nonumber
		\stackrel{m}{=} & n^{-1}\bbr^*_k (\bbB_{nk} - z\bbI)^{-1} \bbr_k - \frac{n^{-2} \bbr^*_k (\bbB_{nk} - z\bbI)^{-1} \bbr_k \bbr_k^* (\bbB_{nk} - z\bbI)^{-1} \bbr_k}{\breve{\beta}_k} \\ \nonumber
		\stackrel{m}{=} & \frac{n^{-1} \bbr^*_k (\bbB_{nk} - z\bbI)^{-1} \bbr_k  (1+n^{-1}\bbx_k^*\bbT^{1/2}_n (\bbB_{nk} - z\bbI)^{-1} \bbT^{1/2}_n\bbx_k)}{\breve{\beta}_k} \\ \nonumber 
		\stackrel{m}{=} &  \frac{n^{-1} \bbr^*_k (\bbB_{nk} - z\bbI)^{-1} \bbr_k  (1 + y_n g^0_n(z))}{\breve{\beta}_k},
		\end{align}
		where we have used Lemma \ref{lemma3.2} for
		\begin{align*}
		n^{-1}\bbx_k^*\bbT^{1/2}_n (\bbB_{nk} - z\bbI)^{-1} \bbT^{1/2}_n\bbx_k 
		& \stackrel{m}{=}  n^{-1} \mathrm{tr} \bbT_n (\bbB_{nk} - z\bbI)^{-1}\\
		& \stackrel{m}{=} y_n g_n(z) \stackrel{m}{=}  y g^0_n(z).
		\end{align*}
		
		From Lemma \ref{lemma726}, $ n^{-1}\bbr^*_k (\bbB_n - z\bbI)^{-1} \bbr_k $ is bounded from above, consequently, $ 1/\breve{\beta}_k $ is bounded from above.
		
		Since $ \beta_k\stackrel{m}{=} \breve{\beta}_k  $, $ 1/\beta_k $ is bounded in moment from above.
	\end{proof}
	Similar to \eqref{7453}, one may establish the following lemma.
	
	\begin{lemma}\label{lemma730}
		For any random vector $ \bbv $ which is of bounded norm and independent of 
		$ \bbr_k $, for any $ x \in [a, b] $ we have
		$$ 	\dfrac{ \bbv^* \bbD^{-1}_{nk} \bbr_k }{\breve{\beta}_k} \stackrel{m}{=}  \dfrac{ \bbv^* \bbD^{-1}_{n} \bbr_k}{1 + y g_0(z)  } $$
	\end{lemma}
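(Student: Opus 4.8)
The plan is to reproduce, essentially verbatim, the chain of similar‑in‑moment identities behind \eqref{7453}, with the left factor $n^{-1}\bbr_k^*$ there replaced throughout by $\bbv^*$; the hypothesis that $\bbv$ has bounded norm, together with Lemmas \ref{lemma725} and \ref{lemma726}, plays the role that the estimate $n^{-1}\bbr_k^*\bbD_{nk}^{-1}\bbr_k\stackrel{m}{<}\infty$ plays in \eqref{7453}. Write $\bbD_n=\bbB_n-z\bbI$, $\bbD_{nk}=\bbB_{nk}-z\bbI$, and let $\alpha=\bbr_k+\bbT^{1/2}_n\bbx_k$ be the $k$-th column of $\bbR_n+\bbT^{1/2}_n\bbX_n$, so that $\bbD_n=\bbD_{nk}+n^{-1}\alpha\alpha^*$. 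First I would apply the Sherman--Morrison identity to obtain
\[
\bbv^*\bbD_n^{-1}\bbr_k=\bbv^*\bbD_{nk}^{-1}\bbr_k-\frac{n^{-1}(\bbv^*\bbD_{nk}^{-1}\alpha)(\alpha^*\bbD_{nk}^{-1}\bbr_k)}{\beta_k},\qquad \beta_k=1+n^{-1}\alpha^*\bbD_{nk}^{-1}\alpha .
\]
Then, since $\bbv$ and $\bbB_{nk}$ are independent of $\bbx_k$, I would expand $\alpha=\bbr_k+\bbT^{1/2}_n\bbx_k$ inside each bilinear form and invoke the quadratic‑form estimate of Lemma \ref{lemma3.2} exactly as in the lines preceding \eqref{7453}: the terms carrying a single factor $\bbx_k$ are $\stackrel{m}{=}0$, $n^{-1}\bbx_k^*\bbT^{1/2}_n\bbD_{nk}^{-1}\bbT^{1/2}_n\bbx_k\stackrel{m}{=}n^{-1}\tr\bbT_n\bbD_{nk}^{-1}\stackrel{m}{=}y_ng_{nk}(z)\stackrel{m}{=}yg_0(z)$, and consequently $\bbv^*\bbD_{nk}^{-1}\alpha\stackrel{m}{=}\bbv^*\bbD_{nk}^{-1}\bbr_k$, $\alpha^*\bbD_{nk}^{-1}\bbr_k\stackrel{m}{=}\bbr_k^*\bbD_{nk}^{-1}\bbr_k$, and $\beta_k\stackrel{m}{=}\breve\beta_k$.

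Next I would assemble the pieces as in \eqref{7453}. Using $\breve\beta_k=1+y_ng_{nk}(z)+n^{-1}\bbr_k^*\bbD_{nk}^{-1}\bbr_k$,
\[
\bbv^*\bbD_n^{-1}\bbr_k\stackrel{m}{=}\bbv^*\bbD_{nk}^{-1}\bbr_k\,\frac{\breve\beta_k-n^{-1}\bbr_k^*\bbD_{nk}^{-1}\bbr_k}{\breve\beta_k}=\frac{\bbv^*\bbD_{nk}^{-1}\bbr_k\,(1+y_ng_{nk}(z))}{\breve\beta_k}\stackrel{m}{=}\frac{\bbv^*\bbD_{nk}^{-1}\bbr_k\,(1+yg_0(z))}{\breve\beta_k},
\]
the last step being the same reduction $1+y_ng_{nk}(z)\stackrel{m}{=}1+yg_0(z)$ used at the end of \eqref{7453}. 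Finally, since $1+yg_0(z)$ is nonrandom and bounded away from $0$ uniformly in $x\in[a,b]$ and in $n$ large (by the first paragraph of the proof of Lemma \ref{lemma729}), dividing both sides by it preserves $\stackrel{m}{=}$ and gives the claim.

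The step I expect to be the main obstacle is the propagation of ``$\stackrel{m}{=}$'' through the products above and, in particular, through the quotient by $\breve\beta_k$ (resp.\ $\beta_k$). I would handle it in the spirit of Remark \ref{remark728}: $1/\beta_k$ and $1/\breve\beta_k$ are uniformly bounded in moment by Lemma \ref{lemma729}, each surviving factor ($\bbv^*\bbD_{nk}^{-1}\bbr_k$, $n^{-1}\bbr_k^*\bbD_{nk}^{-1}\bbr_k$, $n^{-1}\alpha^*\bbD_{nk}^{-1}\alpha$) is, up to a fixed negative power of $v_n$, bounded in moment by Lemmas \ref{lemma725}--\ref{lemma726} and Lemma \ref{lemma3.2}, and H\"older's inequality applied to each replacement error keeps every intermediate quantity within the tolerance $K_\ell n^{-2\ell}v_n^{-6\ell}$ of Definition \ref{def727}, provided $\ell$ is chosen large and the exponent $\delta$ in $v_n=n^{-\delta}$ is small (as in Theorem \ref{th1.2}). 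Since the scheme is identical to that of \eqref{7453}, no estimate beyond those already in hand is required; the work is purely in checking that every intermediate quantity stays within the moment budget.
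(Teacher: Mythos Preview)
Your proposal is correct and follows precisely the route the paper intends: the paper's entire proof is the one sentence ``Similar to \eqref{7453}, one may establish the following lemma,'' and you have carried out exactly that substitution of $\bbv^*$ for $n^{-1}\bbr_k^*$ in the chain \eqref{7453}, with the same Sherman--Morrison expansion, the same replacement $\alpha\alpha^*\mapsto\bbr_k\bbr_k^*$, $\beta_k\mapsto\breve\beta_k$, and the same final reduction $1+y_ng_{nk}(z)\stackrel{m}{=}1+yg_0(z)$.
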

	
	\begin{lemma}\label{lemma731}
		Under the conditions of Theorem \ref{th7221}, for any integer $ \ell \geq 1 $
		and non-random vector $ \bbalp= (a_1, \dots , a_p)'  $ 
		$$ \bbE \lvert \bbalp^* \bbx_k \rvert ^{2\ell} \leq K_{\ell} \left[ \left( \sum_{j=1}^{p} \lvert a^2_j \rvert \right) ^{\ell} + \sum_{j=1}^{p} \lvert a^{2\ell}_j \rvert  \right]  $$
	\end{lemma}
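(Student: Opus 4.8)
The plan is to read this as a standard Rosenthal-type moment bound for a linear form in independent, mean-zero random variables, and to prove it by the direct moment expansion combined with a count over the partition structure of the summation indices. Write $\bbalp^*\bbx_k=\sum_{j=1}^p\bar a_jx_{jk}$. The coordinates $x_{1k},\dots,x_{pk}$ are independent with $\bbE x_{jk}=0$ and $\bbE|x_{jk}|^2\le 1$, and after the truncation carried out before Theorem~\ref{th1.2} they satisfy $|x_{jk}|\le C$; hence $\bbE|x_{jk}|^{m}\le C^{m-2}\bbE|x_{jk}|^2\le C^{m-2}$ for every $2\le m\le 2\ell$. Only $|\bbalp^*\bbx_k|$ enters, so the complex case contributes nothing beyond keeping track of which factors are conjugated.

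First I would expand
$$\bbE|\bbalp^*\bbx_k|^{2\ell}=\bbE\Bigl[\Bigl(\sum_{j}\bar a_jx_{jk}\Bigr)^{\ell}\Bigl(\sum_{j}a_j\bar x_{jk}\Bigr)^{\ell}\Bigr]=\sum_{(j_1,\dots,j_{2\ell})}\bar a_{j_1}\cdots\bar a_{j_\ell}\,a_{j_{\ell+1}}\cdots a_{j_{2\ell}}\;\bbE\bigl[x_{j_1k}\cdots x_{j_\ell k}\,\bar x_{j_{\ell+1}k}\cdots\bar x_{j_{2\ell}k}\bigr],$$
the sum running over $\{1,\dots,p\}^{2\ell}$. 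By independence and $\bbE x_{jk}=0$, the expectation vanishes whenever some index value occurs exactly once, so only tuples in which every distinct value occurs at least twice survive; such a tuple has at most $\ell$ distinct values. Grouping the surviving tuples according to the set-partition $\pi$ of $\{1,\dots,2\ell\}$ they induce --- with blocks of sizes $m_1,\dots,m_r\ge 2$, $\sum_i m_i=2\ell$, hence $r\le\ell$ --- independence factorizes the expectation across blocks and bounds it in modulus by $\prod_{i=1}^r\bbE|x_{jk}|^{m_i}\le\prod_{i=1}^r C^{m_i-2}$, uniformly over the choice of distinct indices.

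Next I would perform the summation over index assignments for a fixed partition type: giving distinct labels $j^{(1)},\dots,j^{(r)}$ to the $r$ blocks,
$$\Bigl|\sum_{\mathrm{distinct}\ j^{(1)},\dots,j^{(r)}}\ \prod_{i=1}^r|a_{j^{(i)}}|^{m_i}\Bigr|\le\prod_{i=1}^r\sum_{j=1}^p|a_j|^{m_i}.$$
Since $m_i\ge 2$, monotonicity of $\ell^m$-norms gives $\sum_j|a_j|^{m_i}\le\bigl(\sum_j|a_j|^2\bigr)^{m_i/2}$, so the product over a partition with at least two blocks is at most $\bigl(\sum_j|a_j|^2\bigr)^{\ell}$; the single diagonal partition ($r=1$, one block of size $2\ell$) contributes $\sum_{j=1}^p|a_j|^{2\ell}\,\bbE|x_{jk}|^{2\ell}\le C^{2\ell-2}\sum_{j=1}^p|a_j^{2\ell}|$, which is precisely the second term on the right-hand side. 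Since the number of partition types depends only on $\ell$, summing them with the combinatorial multiplicities absorbed into a constant $K_\ell$ yields $\bbE|\bbalp^*\bbx_k|^{2\ell}\le K_\ell\bigl[(\sum_{j=1}^p|a_j^2|)^{\ell}+\sum_{j=1}^p|a_j^{2\ell}|\bigr]$.

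The only genuine bookkeeping is organizing the partition expansion and verifying that every non-diagonal partition is dominated by $\bigl(\sum_j|a_j|^2\bigr)^{\ell}$ while the diagonal one produces the $\sum_j|a_j^{2\ell}|$ term; everything else (the moment bounds $\bbE|x_{jk}|^m\le C^{m-2}$, the factorization by independence, the finite count of partitions) is routine, so I expect no real obstacle here. As a shortcut one may instead quote Rosenthal's inequality for independent mean-zero summands directly and then absorb $\bbE|x_{jk}|^{2\ell}\le C^{2\ell-2}$, obtaining the same conclusion at once. Note that under the uniform boundedness already in force even the single term $\bigl(\sum_j|a_j|^2\bigr)^{\ell}$ would suffice, so the stated two-term bound is more than enough.
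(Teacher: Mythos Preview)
Your proof is correct and essentially matches the paper's approach: the paper simply cites the Burkholder inequality (Lemma~\ref{lemma2.13}) together with $\bbE|x_{kj}|^2\le 1$ and $|x_{kj}|\le C$ after truncation, which is precisely the Rosenthal-type shortcut you mention at the end. Your combinatorial partition expansion is just an explicit unpacking of that inequality in this special case, so the two arguments coincide in substance.
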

	\begin{proof}
		This is a consequence of Burkholder inequality, Lemma \ref{lemma2.13} and the
		facts that $ \bbE \lvert x^2_{kj} \rvert  \leq 1 $ and $\lvert x_{kj} \rvert \leq C $, after the truncation.
	\end{proof}
	
	\begin{lemma}\label{lemma732}
		Under the conditions of Theorem \ref{th7221}, for any integer $ \ell \geq 1 $
		and non-random Hermitian matrix $ \bbM = (m_{ij} ) $ 
		$$  \bbE \left|  \bbx^*_k \bbM \bbx_k - \sum_{j=1}^{p} m_{jj} \bbE \lvert x_{kj} ^2  \rvert \right|^{2\ell} \leq \le  K_{\ell} [(\mathrm{tr} \bbM \bbM^*)^{\ell} + \mathrm{tr}(\bbM \bbM^*)^{\ell} ] . $$
	\end{lemma}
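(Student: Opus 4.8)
The plan is to run the classical argument behind the quadratic-form bound (this is essentially Lemma~B.26 of \cite{bai2010spectral}, adapted to the present truncated setting): split $\bbx_k^{*}\bbM\bbx_k$ into its diagonal and off-diagonal parts and estimate each by a Burkholder-type inequality. After the truncation and centralization carried out above one may assume the entries $x_{kj}$ are bounded by an absolute constant $C$, have mean zero, satisfy $\bbE|x_{kj}|^{2}\le1$, and (in the complex case) $\bbE x_{kj}^{2}=0$; then $\bbE(\bbx_k^{*}\bbM\bbx_k)=\sum_{j=1}^{p}m_{jj}\bbE|x_{kj}|^{2}$, so the quantity to be estimated is $|W_{1}+W_{2}|$ with
$$
W_{1}=\sum_{j=1}^{p}m_{jj}\bigl(|x_{kj}|^{2}-\bbE|x_{kj}|^{2}\bigr),
\qquad
W_{2}=\sum_{i\ne j}\bar x_{ki}\,m_{ij}\,x_{kj},
$$
and since $\bbE|W_{1}+W_{2}|^{2\ell}\le 2^{2\ell-1}\bigl(\bbE|W_{1}|^{2\ell}+\bbE|W_{2}|^{2\ell}\bigr)$ it is enough to treat the two pieces separately.

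For $W_{1}$ the summands are independent, centered and bounded, so the Burkholder inequality for sums of independent random variables gives $\bbE|W_{1}|^{2\ell}\le K_{\ell}\bigl[(\sum_{j}|m_{jj}|^{2})^{\ell}+\sum_{j}|m_{jj}|^{2\ell}\bigr]$. Here $\sum_{j}|m_{jj}|^{2}\le\mathrm{tr}\,\bbM\bbM^{*}$, and since $(\bbM\bbM^{*})_{jj}=\sum_{i}|m_{ij}|^{2}\ge|m_{jj}|^{2}$, Jensen's inequality applied to the spectral measure of the nonnegative definite matrix $\bbM\bbM^{*}$ at the $j$-th coordinate vector yields $|m_{jj}|^{2\ell}\le\bigl((\bbM\bbM^{*})_{jj}\bigr)^{\ell}\le\bigl((\bbM\bbM^{*})^{\ell}\bigr)_{jj}$, hence $\sum_{j}|m_{jj}|^{2\ell}\le\mathrm{tr}(\bbM\bbM^{*})^{\ell}$. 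This is already of the required form.

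For $W_{2}$ I would set $\mathcal F_{i}=\sigma(x_{k1},\dots,x_{ki})$ and, using $m_{ij}=\overline{m_{ji}}$, write $W_{2}=\sum_{i\ge 2}\gamma_{i}$ with $\gamma_{i}=\bar x_{ki}\zeta_{i}+x_{ki}\bar\zeta_{i}$, $\zeta_{i}=\sum_{j<i}m_{ij}x_{kj}$; then $\gamma_{i}$ is $\mathcal F_{i}$-measurable with $\bbE(\gamma_{i}\mid\mathcal F_{i-1})=0$, so Burkholder's martingale inequality gives $\bbE|W_{2}|^{2\ell}\le K_{\ell}\bigl[\bbE(\sum_{i}\bbE_{i-1}|\gamma_{i}|^{2})^{\ell}+\sum_{i}\bbE|\gamma_{i}|^{2\ell}\bigr]$, where $\bbE_{i-1}(\cdot)=\bbE(\cdot\mid\mathcal F_{i-1})$. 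From $\bbE|x_{ki}|^{2}\le1$ and $\bbE x_{ki}^{2}=0$ one gets $\bbE_{i-1}|\gamma_{i}|^{2}\le 2|\zeta_{i}|^{2}$, so $\sum_{i}\bbE_{i-1}|\gamma_{i}|^{2}\le 2\,\bbx_k^{*}\bbN\bbx_k$ with $\bbN=\bbM_{L}^{*}\bbM_{L}$, $\bbM_{L}$ the strictly lower triangular part of $\bbM$. For the jump term $|\gamma_{i}|\le 2C|\zeta_{i}|$, and Lemma~\ref{lemma731} applied to the vector $(\overline{m_{i1}},\dots,\overline{m_{i,i-1}},0,\dots,0)'$ bounds $\bbE|\zeta_{i}|^{2\ell}$ by $K_{\ell}\bigl[(\sum_{j<i}|m_{ij}|^{2})^{\ell}+\sum_{j<i}|m_{ij}|^{2\ell}\bigr]$; summing over $i$ and using $\sum_{i}(\sum_{j}|m_{ij}|^{2})^{\ell}=\sum_{i}((\bbM\bbM^{*})_{ii})^{\ell}\le\mathrm{tr}(\bbM\bbM^{*})^{\ell}$ (the same Jensen step) together with $\sum_{i,j}|m_{ij}|^{2\ell}\le(\mathrm{tr}\,\bbM\bbM^{*})^{\ell}$ gives $\sum_{i}\bbE|\gamma_{i}|^{2\ell}\le K_{\ell}\bigl[(\mathrm{tr}\,\bbM\bbM^{*})^{\ell}+\mathrm{tr}(\bbM\bbM^{*})^{\ell}\bigr]$.

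The main obstacle is the predictable-variation term $\bbE(\bbx_k^{*}\bbN\bbx_k)^{\ell}$: it is again a quadratic form in $\bbx_k$, so the estimate must be organized as an induction on the moment order. Writing $\bbx_k^{*}\bbN\bbx_k=\bbE(\bbx_k^{*}\bbN\bbx_k)+\bigl(\bbx_k^{*}\bbN\bbx_k-\bbE(\bbx_k^{*}\bbN\bbx_k)\bigr)$, the first term is $\le\mathrm{tr}\,\bbN=\sum_{i>j}|m_{ij}|^{2}\le\mathrm{tr}\,\bbM\bbM^{*}$, and the $\ell$-th moment of the second term is controlled by the lemma itself at the strictly smaller order (passing to even orders by Lyapunov's inequality if needed). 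To close the induction with constants that do not depend on $n$ one only needs the crude bounds $\mathrm{tr}\,\bbN\le\mathrm{tr}\,\bbM\bbM^{*}$ and $\|\bbN\|\le\|\bbM_{L}\|_{F}^{2}\le\mathrm{tr}\,\bbM\bbM^{*}$, which give $\mathrm{tr}\,\bbN\bbN^{*}\le\|\bbN\|\,\mathrm{tr}\,\bbN\le(\mathrm{tr}\,\bbM\bbM^{*})^{2}$ and $\mathrm{tr}(\bbN\bbN^{*})^{m}\le\|\bbN\|^{2m-1}\mathrm{tr}\,\bbN\le(\mathrm{tr}\,\bbM\bbM^{*})^{2m}$, so every term produced by the recursion is dominated by $(\mathrm{tr}\,\bbM\bbM^{*})^{\ell}$, which is bounded by the right side of the claim. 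The base case $\ell=1$ is the elementary computation $\bbE|W_{1}+W_{2}|^{2}=\sum_{j}|m_{jj}|^{2}\,\mathrm{Var}(|x_{kj}|^{2})+\sum_{i\ne j}|m_{ij}|^{2}\bbE|x_{ki}|^{2}\bbE|x_{kj}|^{2}\le K\,\mathrm{tr}\,\bbM\bbM^{*}$ (the extra term $\sum_{i\ne j}\bar x_{ki}^{2}x_{kj}^{2}m_{ij}\overline{m_{ji}}$ vanishing when complex and being harmless when real). Assembling the bounds for $W_1$ and $W_2$ then yields the asserted inequality.
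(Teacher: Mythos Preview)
Your argument is correct and follows essentially the same scheme as the paper: split into diagonal and off-diagonal parts, handle each by Burkholder's inequality, bound the jump terms via Lemma~\ref{lemma731}, and close the predictable-variation term by induction on the moment order. The only noteworthy difference is in how the predictable variation is packaged: you keep it as $\bbE(\bbx_k^{*}\bbN\bbx_k)^{\ell}$ with $\bbN=\bbM_L^{*}\bbM_L$ and then need the auxiliary bounds $\mathrm{tr}(\bbN\bbN^{*})^{m}\le(\mathrm{tr}\,\bbM\bbM^{*})^{2m}$, whereas the paper first extends the inner sum to the full range (legitimate at the level of second moments by independence) and applies Jensen to obtain $\bbE(\bbx_k^{*}\bbM\bbM^{*}\bbx_k)^{\ell}$ directly, which makes the induction slightly cleaner; both routes arrive at the same bound.
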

	\begin{proof}
		Write
		$$ \bbx^*_k \bbM \bbx_k- \sum_{j=1}^{p} m_{jj} \bbE \lvert x_{kj} ^2  \rvert = \sum_{j=1}^{p} m_{jj} (\lvert x^2_{kj} \rvert - \bbE \lvert x^2_{kj} \rvert )+2 \sum_{j_1=2}^{p} \sum_{j_2=1}^{j_1-1} \Re(m_{j_1 j_2} \overline{x}_{k j_1} x_{k j_2}). $$
		Applying Burkholder inequality to the first term, we obtain
		$$  \bbE \left(  \sum_{j=1}^{p} m_{jj} (\lvert x^2_{kj} \rvert - \bbE \lvert x^2_{kj} \rvert ) \right)^{2\ell}  
		\leq K_{\ell}\left[ \left( \sum_{j=1}^{p} \lvert m_{jj}\rvert^2 \right)^{\ell}  + \sum_{j=1}^{p} \lvert m_{jj} \rvert^{2\ell}  \right]  
		\leq K_{\ell} (\mathrm{tr} \bbM \bbM^*)^{\ell} .  $$
		Applying Burkholder inequality to the second term, we obtain
		\begin{align*}
		&\bbE \left( 2 \sum_{j_1=2}^{p} \sum_{j_2=1}^{j_1-1} \Re(m_{j_1 j_2} \overline{x}_{k j_1} x_{k j_2}) \right) ^{2\ell}\\
		\leq & K_{\ell}  \left[ \left( \sum_{j_1=2}^{p} \bbE \left|  \sum_{j_2=1}^{j_1-1} m_{j_1 j_2} x_{k j_2}\right|^2 \right)^{\ell}  + \sum_{j_1=2}^{p} \bbE \left|  \sum_{j_2=1}^{j_1-1} m_{j_1 j_2} x_{k j_2}\right|^{2\ell}  \right]  .
		\end{align*}
		By independence and Jensen inequality, the first term above is less than or
		equal to
		\begin{align*}
		& K_{\ell} \left( \sum_{j_1=1}^{p} \bbE \left|  \sum_{j_2=1}^{p} m_{j_1 j_2} x_{k j_2}\right|^2 \right)^{\ell} 
		\leq  K_{\ell} \bbE \left( \sum_{j_1=1}^{p} \left|  \sum_{j_2=1}^{p} m_{j_1 j_2} x_{k j_2}\right|^2 \right)^{\ell} \\
		=&K_{\ell}  \bbE (\bbx_k^*\bbM \bbM^*\bbx_k)^{\ell}
		\leq  K_{\ell}[(\mathrm{tr} \bbM \bbM^*)^{\ell} + \mathrm{tr}(\bbM \bbM^*)^{\ell} ] ,
		\end{align*}
		where the last inequality follows by induction. 
		Again, by Burkholder inequality to the second term, we have
		\begin{align*}
		& \sum_{j_1=1}^{p} \bbE \left|  \sum_{j_2=1}^{p} m_{j_1 j_2} x_{k j_2}\right|^{2\ell }
		\leq K_{\ell} \sum_{j_1=1}^{p}  \left[ \left(\sum_{j_2=1}^{p} \lvert m_{j_1 j_2} \rvert^2 \right)^{\ell} + \sum_{j_2=1}^{p} \lvert m_{j_1 j_2} \rvert^{2\ell }\right]  \\
		\leq & K_{\ell}[(\mathrm{tr} \bbM \bbM^*)^{\ell} + \mathrm{tr}(\bbM \bbM^*)^{\ell} ]. 
		\end{align*}
		Collect the inequalities above, the proof of the lemma is complete.
	\end{proof}
	
	\section{Convergence of the Random Part}
	
	Constants appearing in inequalities are designated by $ K $, sometimes subscripted. They are nonrandom and may differ from one appearance to the next.
	
	We first introduce some notation to simplify the writing. 
	Denote $ \bbD_n = \bbB_n - z\bbI $, $ \bbD_{nk} = \bbB_{nk} - z\bbI = \bbD_n - \alpha_k \alpha_k^* $, $ \bbQ = \bbK - z\bbI $, where
	$$  \bbB_{nk} = \bbB_n - \alpha_k \alpha_k^* , ~~\alpha_k = \dfrac{1}{\sqrt{n}} (\bbr_k + \bbT^{1/2} \bbx_k),$$
	$$ \bbK= \dfrac{\frac{1}{n}\bbR_n \bbR_n^*}{1+y \bbE g_n(z)} - z\bbE \underline{s}_n(z) T_n~~\mbox{and}~~ \underline{s}_n= -\dfrac{1-y_n}{z}+y_n s_n(z)  .$$ 
	
	Also, let $ \bbE_0(\cdot) $ be the expectation and $ \bbE_k(\cdot) $ be the conditional expectation with respect to the $\sigma $-field generated by the random variables $ \{ x_{ij}, i,j>k\} $. We employ the martingale technique to decompose the random part
	$ s_n - \bbE s_n $ as a sum of martingale differences
	\begin{align} \label{7454}
	s_n - \bbE s_n &= \sum_{k=1}^{n} (\bbE_k - \bbE_{k-1} ) s_n =  \sum_{k=1}^{n}
	(\bbE_k - \bbE_{k-1})(s_n - s_{nk})\\ \nonumber
	& = \dfrac{1}{p} \sum_{k=1}^{n} (\bbE_k - \bbE_{k-1})\alpha_k^* \bbD^{-2}_{nk} \alpha_k \beta _k^{-1} \\ \nonumber
	&\stackrel{m}= \dfrac{1}{p} \sum_{k=1}^{n} (\bbE_k - \bbE_{k-1})(\sigma_{nk} \breve{\beta}_k^{-1} - \overline{\sigma}_{nk}  \breve{\beta}_k^{-1} \beta_k^{-1} \varepsilon_k), 
	\end{align}
	where
	\begin{align*}
	\beta_k &= 1 + n^{-1} \alpha_k^*\bbD_{nk}^{-1} \alpha_k ,\\
	\breve{\beta}_k &= 1 + n^{-1}\bbr^*_k \bbT^{1/2} \bbD^{-1}_{nk} \bbr_k + n^{-1} \mathrm{tr} \bbT \bbD^{-1}_{nk} ,\\
	\varepsilon_k &= \beta_k^{-1} - \breve{\beta}_k^{-1} \\
	\overline{\sigma}_{nk} &= n^{-1}\bbr^*_ k \bbD^{-2}_{nk} \bbr_k + n^{-1} \mathrm{tr}\bbT \bbD^{-2}_{nk},\\
	\sigma_{nk} &= n^{-1}(\alpha_k^* \bbD^{-2}_{nk} \alpha_k) - \overline{\sigma}_{nk}.
	\end{align*} 
	
	In the following, we will show for $ v_n = \kappa n^{-1/m} $, $ m $ suitable large, and any $ \ell \geq 1 $
	\begin{align} \label{7455}
	\bbE \lvert s_n(z) - \bbE s_n(z)\rvert^{2\ell}  \leq
	\left\{
	\begin{array}{l}
	K_{\ell} v_n^{-6\ell} n^{-2\ell}  \ (a)\\
	K_{\ell}  n^{-2\ell} ,    \ \  \quad \ (b)\\
	\end{array} \right.
	\end{align} 
	and by a similar procedure
	\begin{align} \label{7456}
	\bbE \lvert g_n(z) - \bbE g_n(z)\rvert^{2\ell} \leq
	\left\{
	\begin{array}{l}
	K_{\ell} v_n^{-6\ell} n^{-2\ell}  \ (a)\\
	K_{\ell}  n^{-2\ell} ,    \ \  \quad \ (b)\\
	\end{array} \right.
	\end{align} 
	where $ g_n(z) = p^{-1} \mathrm{tr}\bbT \bbD^{-1}_n $. And for any bounded nonrandom vector $ \bbu  $
	\begin{align}\label{7457}
	\bbE \lvert \bbu^*\bbD_n^{-1} \bbu - \bbE \bbu^* \bbD_n^{-1} \bbu \rvert ^{2\ell} \leq  K_{\ell} n^{-\ell} v^{-4\ell}. 
	\end{align}
	
	The $ ``a" $ bound holds uniformly for all $ x \in [e, f] $, it contributes to a preliminary estimate for the convergence of $ s_n(x + iv_n) - s^0_n(x + iv_n) $. 
	Based on the preliminary estimation, we will establish a convergence rate of the ESD 
	$ \| F_n - F_0^{y_n,H_n }\| = O(\sqrt{v_n}) $. 
	The $ ``b" $ bound holds uniformly for all $ x \in [a, b] $, it is considered as a refinement of the estimate of the convergence of $ s_n(x + i\widetilde{v}_n) - \bbE s_n(x + i\widetilde{v}_n) $. 
	The refined convergence rate will be proved under an additional condition that $ F_0^{y_n,H_n }([a', b']) = O(\widetilde{v}_n^4) $ which is established based on the $ ``a" $ bound, where $ \widetilde{v}_n = \sqrt[8]{v_n}  $ and $ [a', b'] = [a -\varepsilon, b + \varepsilon] \subset  (c, d) \subset  S_{F_0^{y_n,H_n }} , \varepsilon > 0 $. 
	Notice these bounds hold for all $ \ell \geq 1 $ once they are shown to be true for sufficiently large $ \ell $. 
	For brevity, we use an abused notation that simplifies $ \widetilde{v}_n $ as $ v_n $. 
	That means, the $ v_n $'s in $ ``b" $ bounds stands for $ \sqrt[8]{v_n}  $ as those in 
	$ ``a" $  bounds. 
	Namely in the proofs of $ ``b" $ bounds will be proceeded under the additional condition that $ F_0^{y_n,H_n }([a, b]) = O(v_n^4) $.
	
	Before proceeding, we introduce some lemmas for the next proofs.
	We first establish $ \eqref{7455}(a) $. 
	Splitting the interval $ [e, f] $ into $ n $ equal parts and write the set of splitting points as $ S_n $. So,
	\begin{align*}
	& \left|  \sup_{x \in [e,f]} \lvert s_n(z) - \bbE s_n(z)\rvert - \max_{x\in S_n} \lvert s_n(z) - \bbE s_n(z)\rvert  \right|  \\
	\leq & \max_{x\in S_n} \sup_{\lvert x_1 -x_2 \rvert \leq (f- e)/2n}
	[ \lvert s_n(z_1) - s_n(z_2)\rvert  + \lvert \bbE s_n(z_1) - \bbE s_n(z_2) \rvert ] \\
	\leq& (f - e)n^{-1} v_n^{-2}.
	\end{align*}
	So, we have
	\begin{align*}
	&\sup_{x \in [e,f]} \bbE \lvert \underline{s}_n(z) - \bbE \underline{s}_n(z) \rvert ^{2\ell} \\
	\leq &  \max_{x\in S_n}  \bbE \lvert \underline{s}_n(z) - \bbE \underline{s}_n(z) \rvert^{2\ell}  + K_{\ell}n^{-2\ell} v_n^{-4\ell}.
	\end{align*}
	
	Note that $ \Im(\breve{\beta}_k) = v_n(n^{-1} \lVert (\bbD^{-1}_{nk} \bbr_k \rVert^2 +n^{-1} \mathrm{tr} [\bbD^{-1}_{nk}  \overline{\bbD}^{-1}_{nk}]) \geq v_n \lvert \overline{\sigma}_{nk} \rvert $ and hence $ \lvert \overline{\sigma}_{nk}\breve{\beta}_k^{-1} \rvert \leq  1/v_n $, where $ \overline{\bbD} $ denotes the conjugate transpose of $ \bbD $.
	Furthermore, it is already known that $ \lvert \overline{\sigma}_{nk} \rvert \leq Kv_n^{-2} $ and 
	$ \lvert \beta_k^{-1} \rvert  \leq  \lvert z \rvert v_n^{-1} $. 
	Then, by Burkholder inequality, i.e. Lemma \ref{lemma2.13}, and the
	decomposition \eqref{7454}, for $ x \in S_n $, we have for $ v_n = n^{-\delta} $ with 
	$ \delta < 1/(6\ell+ 1) $,
	\begin{align}\label{7458}
	&\bbE \lvert s_n(z) - \bbE s_n(z)\rvert^{2\ell} \\ \nonumber
	\leq& K_{\ell} p^{-2\ell} \left(  \sum_{k=1}^{n} \bbE \lvert \sigma_{nk} \breve{\beta}_k^{-1} - \overline{\sigma}_{nk} \breve{\beta}_k^{-1} \beta_k^{-1} \varepsilon_k \rvert^2 \right)^{\ell}  \\ \nonumber
	& +K_{\ell} p^{-2\ell}  \sum_{k=1}^{n} \bbE \lvert \sigma_{nk} \breve{\beta}_k^{-1} - \overline{\sigma}_{nk} \breve{\beta}_k^{-1} \beta_k^{-1} \varepsilon_k \rvert^{2\ell}\\ \nonumber
	\leq& K_{\ell} p^{-2\ell} \left(  \sum_{k=1}^{n} v_n^{-2} \bbE \lvert \sigma_{nk} \rvert^2+ v_n^{-4} \bbE \lvert \varepsilon_k \rvert^2   \right)^{\ell}  \\ \nonumber   
	& +K_{\ell} p^{-2\ell} \sum_{k=1}^{n} [v_n^{2\ell} \bbE \lvert \sigma_{nk} \rvert^{2\ell} + v_n^{-4\ell}\bbE \lvert \varepsilon_k \rvert ^{2\ell}].
	\end{align}
	Furthermore, we have
	\begin{align}\label{7459}
	\bbE \lvert \sigma_{nk}^2 \rvert &\leq 2n^{-2} [4\bbE \lvert \bbr^*_k \bbD^{-2}_{nk} \bbT^{1/2} \bbx_k \rvert^2 +\bbE \lvert \bbx^*_k \bbT^{1/2} \bbD^{-2}_{nk}\bbT^{1/2} \bbx_k - \mathrm{tr} \bbT \bbD^{-1}_{nk} \rvert^2] \\ \nonumber
	& \leq  2n^{-2} [K\bbE \bbr^*_k \bbD^{-2}_{nk} \bbT_n \overline{\bbD}^{-2}_{nk} \bbr_k + \bbE \mathrm{tr}\bbT \bbD^{-2}_{nk} \bbT_n \overline{\bbD}^{-2}_{nk}] \leq Kn^{-1} v^{-4}_n,
	\end{align}
	and
	\begin{align} \label{7460}
	\bbE \lvert \varepsilon_k^2 \rvert & \leq  2n^{-2} [K \bbE \lvert \bbr^*_k \bbD^{-1}_{nk} \bbT^{1/2} \bbx_k \rvert^2 + \bbE \lvert \bbx^*_k \bbT^{1/2} \bbD^{-1}_{nk} \bbT^{1/2} \bbx_k -  \mathrm{tr}\bbT \bbD^{-1}_{nk} \rvert^2] \\
	\nonumber
	& \leq 2n^{-2} [K\bbE \bbr^*_k \bbD^{-1}_{nk} \bbT_n \overline{\bbD}^{-1}_{nk} \bbr_k + \bbE \mathrm{tr}\bbT \bbD^{-1}_{nk} \bbT_n \overline{\bbD}^{-1}_{nk}] \leq Kn^{-1} v^{-2}_n,
	\end{align}
	and by Lemmas \ref{lemmab26}, \ref{lemma731} and \ref{lemma732}.
	\begin{align}\label{7461}
	\bbE \lvert \sigma_{nk}^{2\ell} \rvert & \leq 2^{\ell} n^{-2\ell}
	[2\bbE \lvert \bbr^*_k \bbD^{-2}_{nk} \bbT^{1/2} \bbx_k \rvert^{2\ell}
	+\bbE \lvert \bbx^*_k \bbT^{1/2} \bbD^{-2}_{nk}\bbT^{1/2} \bbx_k - \mathrm{tr} \bbT \bbD^{-2}_{nk} \rvert^{2\ell}]  \\ \nonumber
	& \leq 2^{2\ell} n^{-2\ell} [2^{2\ell} \bbE \lVert \bbr^*_k \bbD^{-2}_{nk} \bbT^{1/2}_{n} \rVert ^{2\ell} +\bbE \mathrm{tr}(\bbT^2 \bbD^{-2}_{nk} \bbT_n \overline{\bbD}^{-2}_{nk} )^{\ell}]  \\ \nonumber
	& \leq  o(n^{-\ell} v_n^{-4\ell}) = o(n^{-1} v^{2\ell}_n )
	\end{align}
	and
	\begin{align}\label{7462}
	\bbE \lvert \varepsilon_k^{2\ell} \rvert \leq  K_{\ell} n^{-2\ell+1}  v_n^{-\ell} = o(n^{-1}  v_n^{4\ell} ). 
	\end{align}
	Substituting these into $ \eqref{7458} $, we obtain $ \eqref{7455} (a) $. The proof of $ \eqref{7455} (a) $ is done.
	
	The proofs of $ \eqref{7456} (a) $  and $ \eqref{7457} $ are similar to that $ \eqref{7455} (a) $ and hence omitted.

	Now, let us consider the refinement $ \eqref{7455} (b) $ under the additional condition that $ \underline{F}([a', b']) = o(v^4_n) $. 
	Review the proof of $ \eqref{7455} (a) $ we only need to refine the estimates $ \eqref{7459} $ and $ \eqref{7460} $.
	
	Write the spectral decomposition of $ \bbB_{nk} = \sum_{j=1}^{n}  \lambda_{nkj} \bbv_{kj} \bbv^*_{kj} $ , then we have
	\begin{align*}
	&\dfrac{1}{n} \bbE \mathrm{tr} \bbT \bbD^{-1}_{nk} \bbT_n \overline{\bbD}^{-1}_{nk}
	= \dfrac{1}{n} \sum_{j=1}^{n} (\bbv^*_{kj} \bbT_n \bbv_{kj} )^2 \lvert \lambda_{nkj} - z \rvert ^{-2} \\
	= & \dfrac{1}{n} \sum_{\lambda_{nkj} \notin [a',b']}  (\bbv^*_{kj} \bbT_n \bbv_{kj})^2 \lvert \lambda_{nkj} - z\rvert^{-2} + \dfrac{1}{n} \sum_{\lambda_{nkj} \in [a',b']}  (\bbv^*_{kj} \bbT_n \bbv_{kj})^2 \lvert \lambda_{nkj} - z\rvert^{-2} \\
	\leq & \dfrac{1}{n} \nu^{-2} \sum_{\lambda_{nkj} \notin [a',b']} (\bbv^*_{kj} \bbT_n \bbv_{kj} )^2 + K v_n^{-2}(\underline{F}_n([a', b'] )+ O(p^{-1})) \\
	\le& \frac{K}{\nu^2n}\sum_{\lambda_{nkj} \notin [a',b']}+ O(p^{-1})=K\nu^{-2} \bbE F_n([a',b'])+O(p^{-1})\\
	\le& K\nu^{-2}v_n^4+o(1) ,
	\end{align*}
	where $ \nu = \min(a - a', b' - b) $ and the last step follows from the facts that
	$ \bbv^*_{kj} \bbT_n \bbv_{kj}  $ is bounded since $ \bbT_n $ is bounded in spectral norm and that the difference of the numbers of eigenvalues of $ \bbB_{nk} $ falling in the interval $ [a', b'] $ from that of $ \bbB_n $ is at most $ 1 $ by the Lemma \ref{p528inth}.
	Consequently, we obtain
	\begin{align}\label{7463}
	\dfrac{1}{n} \bbE \mathrm{tr} \bbT_n \bbD^{-1}_{nk} \bbT_n \bbD^{-1}_{nk} = o(1). 
	\end{align}
	Similarly, we can prove
	\begin{align}\label{7464}
	\dfrac{1}{n} \bbE \mathrm{tr} \bbT_n \bbD^{-2}_{nk} \bbT_n \overline{\bbD}^{-2}_{nk} = o(1). 
	\end{align}
	Next, we shall estimate $ n^{-1} \bbr^*_k \bbD^{-1}_{nk} \bbT_n \overline{\bbD}^{-1}_{nk} \bbr_k \leq Kn^{-1} \bbr^*_k \bbD^{-1}_{nk} \overline{\bbD}^{-1}_{nk} \bbr_k $. 
	We use the size reducing formula $ \bbD_n^{-1}  = \bbD_{nk}^{-1} -  \dfrac{\bbD_{nk}^{-1} \alpha_k \alpha_k^*  \bbD_{nk}^{-1} }{\beta_k} $
	and by similar steps
	to $ \eqref{7453} $, we get
	\begin{align*}
	&n^{-1}\bbr^*_k \bbD^{-1}_n \overline{\bbD}^{-1}_n \bbr_k \\
	=& n^{-1}\bbr^*_k \bbD^{-1}_{nk} \overline{\bbD}^{-1}_{nk} \bbr_k 
	- \dfrac{n^{-1}\bbr^*_k \bbD^{-1}_{nk} \alpha_k \alpha_k^* \bbD^{-1}_{nk} \overline{\bbD}^{-1}_{nk} \bbr_k }{\beta_k} \\
	&-\dfrac{n^{-1}\bbr^*_k \bbD^{-1}_{nk} \overline{\bbD}^{-1}_{nk}  \alpha_k \alpha_k^*  \overline{\bbD}^{-1}_{nk} \bbr_k  }{\overline{\beta}_k}  
	+\dfrac{n^{-1}\bbr^*_k \bbD^{-1}_{nk} \alpha_k \alpha_k^* \bbD^{-1}_{nk} \overline{\bbD}^{-1}_{nk}  \alpha_k \alpha_k^*  \overline{\bbD}^{-1}_{nk} \bbr_k  }{\overline{\beta}_k \beta_k} \\
	\stackrel{m}{=} & n^{-1}\bbr^*_k \bbD^{-1}_{nk} \overline{\bbD}^{-1}_{nk} \bbr_k 
	- \dfrac{n^{-1}\bbr^*_k \bbD^{-1}_{nk} \bbr_k \bbr_k^* \bbD^{-1}_{nk} \overline{\bbD}^{-1}_{nk} \bbr_k }{\beta_k} \\
	&-\dfrac{n^{-1}\bbr^*_k \bbD^{-1}_{nk} \overline{\bbD}^{-1}_{nk}  \bbr_k \bbr_k^*  \overline{\bbD}^{-1}_{nk} \bbr_k  }{\overline{\beta}_k}  
	+\dfrac{n^{-1}\bbr^*_k \bbD^{-1}_{nk} \bbr_k \bbr_k^* \bbD^{-1}_{nk} \overline{\bbD}^{-1}_{nk}  \bbr_k\bbr_k^*  \overline{\bbD}^{-1}_{nk} \bbr_k  }{\overline{\beta}_k \beta_k}  \\
	\stackrel{m}{=} & n^{-1}\bbr^*_k \bbD^{-1}_{nk} \overline{\bbD}^{-1}_{nk} \bbr_k \times \left(  1 - \frac{n^{-1}\bbr^*_k \bbD^{-1}_{nk} \bbr_k}{ 1+ n^{-1} \alpha_k^* \bbD^{-1}_{nk} \alpha_k + y_n \bbE g_n }  \right) \\
	&\times \left(  1 - \frac{n^{-1}\bbr^*_k \overline{\bbD}^{-1}_{nk} \bbr_k}{ 1+ n^{-1} \alpha_k^* \overline{\bbD}^{-1}_{nk} \alpha_k + \bbE \overline{g}_n }  \right) \\
	= &\frac{n^{-1}\bbr^*_k \bbD^{-1}_{nk} \overline{\bbD}^{-1}_{nk} \bbr_k \lvert 1 + y_n \bbE g_n \rvert^2 }{\lvert \breve{\beta}_k \rvert^2 } .
	\end{align*}
	The last step follows by Lemma $ \ref{lemma729} $. Therefore, we obtain
	\begin{align}\label{7465}
	\frac{n^{-1}\bbr^*_k \bbD^{-1}_{nk} \overline{\bbD}^{-1}_{nk} \bbr_k}{
		\lvert \breve{\beta}_k \rvert^2 }  
	\stackrel{m}{=}  \frac{n^{-1}\bbr^*_k \bbD^{-1}_{n} \overline{\bbD}^{-1}_{n} \bbr_k}{ \lvert 1 + y_n \bbE g_n \rvert^2 } . 
	\end{align}
	Note that
	$$  \lvert \beta_k \rvert^2 \stackrel{m}{=} \lvert \breve{\beta}_k \rvert^2, \  \text{and} \ 
	1 + y_n \bbE g_n \to 1 + y g_0.  $$
	Hence, by the spectral decomposition of $ \bbB_n = \sum_{j=1}^{n} \lambda_{nj} \bbv_j \bbv_j^* $, we have
	\begin{align}\label{7466}
	&\sum_{k=1}^{n} n^{-2}
	\frac{ \bbr^*_k \bbD^{-1}_{nk} \overline{\bbD}^{-1}_{nk} \bbr_k }{\lvert \breve{\beta}_k \rvert^2 } 
	\leq K \sum_{k=1}^{n} n^{-2} \frac{ \bbr^*_k \bbD^{-1}_{n} \overline{\bbD}^{-1}_{n} \bbr_k}{ \lvert 1 + yg_0 \rvert^2 } 
	\leq K n^{-2} \mathrm{tr} \bbR_n \bbR^*_n \bbD^{-1}_n \overline{\bbD}^{-1}_{n}\\
	\nonumber
	= & K n^{-1} \sum_{j=1}^{p} \bbv_j^* (\bbR_n \bbR^*_n/n) \bbv_j  \lvert \lambda_j - z \rvert^{-2} \\ \nonumber 
	= & K n^{-1} \sum_{ \lambda_j \notin [a',b'] } \bbv_j^* (\bbR_n \bbR^*_n/n) \bbv_j  \lvert \lambda_j - z \rvert^{-2} + K n^{-1} \sum_{ \lambda_j \in [a',b'] } \bbv_j^* (\bbR_n \bbR^*_n/n) \bbv_j  \lvert \lambda_j - z \rvert^{-2} \\ \nonumber
	\leq & K n^{-1} \sum_{ \lambda_j \notin [a',b'] } \bbv_j^* (\bbR_n \bbR^*_n/n) 
	+ K v_n^{-2} F_n([a', b']) \\ \nonumber
	\leq & o_{a.s}(1),
	\end{align}
	where the second inequality follows by $ \lvert 1 + y g_0\rvert  \geq \delta > 0 $ due to Theorem 1 in \cite{zhou2023analysis},  the last step follows since 
	$ \bbv_j^* (\bbR_n \bbR^*_n/n) \bbv_j $ is bounded and $ F_n([a', b'])=o_{a.s}(v^4_n) $.
	
	Finally, similarly applying the size reducing formula to the first $ \bbD^{-1} $ and
	the last $ \overline{\bbD}^{-1} $, we obtain
	\begin{align*}
	&	n^{-1} \bbr^*_k \bbD^{-2}_n \overline{\bbD}^{-2}_n \bbr_k \\
	\stackrel{m}{=} & n^{-1} \bbr^*_k \bbD^{-1}_{nk} \bbD_n^{-1}  \overline{\bbD}^{-1}_{n} \overline{\bbD}^{-1}_{nk} \bbr_k  
	- \dfrac{ n^{-1}  \bbr^*_k \bbD^{-1}_{nk} \bbr_k}{\beta_k} n^{-1} \bbr^*_k \bbD^{-1}_{nk} \bbD_n^{-1}  \overline{\bbD}^{-1}_{n} \overline{\bbD}^{-1}_{nk} \bbr_k \\
	& - n^{-1} \bbr^*_k \bbD^{-1}_{nk} \bbD_n^{-1}  \overline{\bbD}^{-1}_{n} \overline{\bbD}^{-1}_{nk} \bbr_k \dfrac{ n^{-1}  \bbr^*_k \overline{\bbD}^{-1}_{nk} \bbr_k}{\overline{\beta}_k}  
	+ n^{-1} \bbr^*_k \bbD^{-1}_{nk} \bbD_n^{-1}  \overline{\bbD}^{-1}_{n} \overline{\bbD}^{-1}_{nk} \bbr_k  \dfrac{\lvert n^{-1} \bbr^*_k \bbD^{-1}_{nk} \bbr_k \rvert^2 }{\lvert \beta_k \rvert ^2} \\
	\stackrel{m}{=} & n^{-1} \bbr^*_k \bbD^{-1}_{nk} \bbD_n^{-1}  \overline{\bbD}^{-1}_{n} \overline{\bbD}^{-1}_{nk} \bbr_k 
	\left( 1- \dfrac{n^{-1} \bbr^*_k \bbD^{-1}_{nk} \bbr_k}{\beta_k} \right) 
	\left( 1- \dfrac{n^{-1} \bbr^*_k \overline{\bbD}^{-1}_{nk} \bbr_k}{\overline{\beta}_k}   \right) \\
	\stackrel{m}{=} &  n^{-1} \bbr^*_k \bbD^{-1}_{nk} \bbD_n^{-1}  \overline{\bbD}^{-1}_{n} \overline{\bbD}^{-1}_{nk} \bbr_k  \dfrac{\lvert 1 + y \bbE g_n\rvert^2 }{\lvert \breve{\beta}_k \rvert^2}.
	\end{align*}
	Furthermore,
	\begin{align*}
	&\bbr^*_k \bbD^{-1}_{nk} \bbD_n^{-1}  \overline{\bbD}^{-1}_{n} \overline{\bbD}^{-1}_{nk} \bbr_k 
	%= \bbr^*_k \bbD^{-1}_{nk} \overline{\bbD}^{-1}_{n} \bbD_n^{-1}   \overline{\bbD}^{-1}_{nk} \bbr_k 
	\\
	\stackrel{m}{=} & \bbr^*_k \bbD^{-2}_{nk} \overline{\bbD}_{nk}^{-2} \bbr_k 
	-2 \Re \left(  \frac{\bbr^*_k \bbD^{-2}_{nk} \overline{\bbD}^{-1}_{nk} \bbr_k}{\beta_k}  \bbr^*_k \bbD^{-1}_{nk} \overline{\bbD}^{-1}_{nk} \bbr_k \right) 
	+ \dfrac{1}{\lvert \beta_k \rvert^2} (\bbr^*_k \bbD^{-1}_{nk} \overline{\bbD}^{-1}_{nk} \bbr_k )^3.
	\end{align*}
	And similarly
	\begin{align*}
	\bbr^*_k \bbD_n^{-2}  \overline{\bbD}^{-1}_{n}  \bbr_k 
	& \stackrel{m}{=}  \frac{\bbr^*_k \bbD^{-1}_{nk} \bbD_n^{-1}\overline{\bbD}^{-1}_{nk} \bbr_k \lvert 1 + y \bbE g_n\rvert^2}{\lvert \beta_k \rvert^2} \\ 
	& \stackrel{m}{=} \frac{\bbr^*_k \bbD^{-2}_{nk} \overline{\bbD}^{-1}_{nk} \bbr_k \lvert 1 + y \bbE g_n\rvert^2}{\lvert \beta_k \rvert^2} 
	- \frac{2 \Re( \bbr^*_k \bbD^{-2}_{nk} \bbr_k) \bbr^*_k \bbD^{-1}_{nk} \overline{\bbD}^{-1}_{n} \bbr_k \lvert 1 + y \bbE g_n\rvert^2}{ \beta \lvert \beta_k \rvert^2}  .
	\end{align*}
	Consequently, we obtain
	\begin{align*}
	&\frac{n^{-1} \bbr^*_k \bbD_{nk}^{-2}  \overline{\bbD}^{-2}_{nk}  \bbr_k }{\lvert \breve{\beta}_k \rvert^2} \\
	\stackrel{m}{=} & \frac{n^{-1} \bbr^*_k \bbD_{nk}^{-1} \bbD_{n}^{-1} \overline{\bbD}^{-1}_{n}  \overline{\bbD}^{-1}_{nk}  \bbr_k }{\lvert \breve{\beta}_k \rvert^2} 
	+ 2 \Re \left(  \frac{\bbr^*_k \bbD^{-2}_{nk} \overline{\bbD}^{-1}_{nk} \bbr_k}{\beta_k \lvert \beta_k \rvert^2}  \bbr^*_k \bbD^{-1}_{nk} \overline{\bbD}^{-1}_{nk} \bbr_k \right) 
	- \dfrac{1}{\lvert \beta_k \rvert^4} (\bbr^*_k \bbD^{-1}_{nk}  \overline{\bbD}^{-1}_{nk} \bbr_k )^3 \\
	\stackrel{m}{=} & \frac{n^{-1} \bbr^*_k \bbD_{n}^{-2}  \overline{\bbD}^{-2}_{n}  \bbr_k }{\lvert 1 + y \bbE g_n \rvert^2} 
	+ 2 \Re \left(  \frac{\bbr^*_k \bbD^{-2}_{n} \overline{\bbD}^{-1}_{n} \bbr_k}{\beta_k \lvert 1 + y \bbE g_n \rvert^2}  \bbr^*_k \bbD^{-1}_{nk} \overline{\bbD}^{-1}_{nk} \bbr_k \right) \\
	&+ 4\Re \left(  \frac{\bbr^*_k \bbD^{-2}_{n} \bbr_k}{\breve{\beta}_k^2
		\lvert \breve{\beta}_k \rvert^2} ( \bbr^*_k \bbD^{-1}_{nk} \overline{\bbD}^{-1}_{nk} \bbr_k )^2 \right)  
	- \dfrac{1}{\lvert \beta_k \rvert^4} (\bbr^*_k \bbD^{-1}_{nk}  \overline{\bbD}^{-1}_{nk} \bbr_k )^3
	\end{align*}
	Therefore, we may similarly prove that
	\begin{align*}
	\bbE \sum_{k=1}^{n} \frac{n^{-2} \bbr^*_k \bbD_{nk}^{-2}  \overline{\bbD}^{-2}_{nk}  \bbr_k }{\lvert \breve{\beta}_k \rvert^2} & \stackrel{m}{\leq}  \bbE \sum_{k=1}^{n}
	\frac{n^{-2} \bbr^*_k \bbD_{n}^{-2}  \overline{\bbD}^{-2}_{n}  \bbr_k }{\lvert 1 + y \bbE g_n\rvert^2} \leq Kn^{-2} \mathrm{tr} (\bbR_n \bbR_n^* \bbD^{-2} \overline{\bbD}^{-2}) \leq o(1).
	\end{align*}
	Substituting these two estimate to $ \eqref{7458} - \eqref{7460}$, the proof of 
	$ \eqref{7455} (b) $ will be complete. 
	The assertions $ \eqref{7456} (a) $  and $ \eqref{7456} (b) $  can be similarly
	proved.

	\section{Convergence of the Nonrandom Part}
	
	Our next goal is to establish the convergence rate of the nonrandom part, i.e., we shall
	prove that
	\begin{align}\label{7467}
	\sup_{x\in[a,b]} \lvert \bbE s_n(x + i v_n) - s^0_n(x + i v_n)\rvert = O(n^{-1}). 
	\end{align}
	This result is not only a necessary step for the proof of $ \eqref{7443} $, it is also
	helpful to the proof of the random part.
	
	In this section, %we will use the strategy in 
	We first prove that
	\begin{align}\label{7468}
	& \bbE s_n(z) - \dfrac{1}{p} \mathrm{tr} \bbQ^{-1} = \omega_{n1}(z) = O(n^{-1}), \\ \nonumber
	& \bbE g_n(z) - \dfrac{1}{p} \mathrm{tr} \bbT_n \bbQ^{-1} = \omega_{n2}(z) = O(n^{-1}), 
	\end{align}
	where $ \bbQ = \bbK - z\bbI $ and
	$$ \bbK = \frac{n^{-1}\bbR_n \bbR_n^*}{1 + y_n \bbE g_n(z)} - z\bbE \underline{s}_n(z)\bbT_n.$$
	In the process of obtaining the LSD of $ \bbB_n $, we proved that
	\begin{align}\label{7214}
	\frac{1}{p}  \mathrm{tr} \bbT^l_n \bbQ^{-1} - \frac{1}{p}\bbE \mathrm{tr} \bbT^l_n(\bbB_n-z\bbI)^{-1} \to 0, \ \mathrm{for} \ \ell=0,1.
	\end{align}
	We only need to refine the order of \eqref{7214}. For the refinement, one needs only examine step by step for all error terms to have the order $ O(n^{-1}) $ when $ x \in [a, b] $.

	Recall the proof of \eqref{7214}, we have
	\begin{align} \label{726}
	&\frac{1}{p} \mathrm{tr} \bbT^\ell_n \bbQ^{-1} - \frac{1}{p} \bbE \mathrm{tr} \bbT^\ell_n(\bbB_n - z\bbI)^{-1}\\ \nonumber
	=& \frac{1}{p}\sum_{k=1}^{n}\bbE \dfrac{\alpha_k^*\bbD_{nk}^{-1}\bbT^\ell_n \bbQ^{-1}\alpha_k }{\beta_k} 
	-\frac{1}{p} \bbE \mathrm{tr} \bbT^\ell_n (\bbB_n - z\bbI)^{-1} \bbK \bbQ^{-1} \\ \nonumber
	=& \frac{1}{p}\sum_{k=1}^{n}\bbE \dfrac{\alpha_k^*\bbD_{nk}^{-1}\bbT^\ell_n\bbQ^{-1} \alpha_k}{\breve{\beta}_k } -\frac{1}{p} \bbE \mathrm{tr} \bbT^\ell_n (\bbB_n - z\bbI)^{-1} \bbK \bbQ^{-1} + o(1)\\ \nonumber
	=& \frac{1}{p} \sum_{k=1}^{n} \bbE  \dfrac{\bbr_k^* \bbQ^{-1} \bbT^\ell_n \bbD_{nk}^{-1} \bbr_k  + \mathrm{tr} [\bbT_n \bbQ^{-1} \bbT^\ell_n \bbD_{nk}^{-1}]}{\breve{\beta}_k} -\frac{1}{p} \bbE \mathrm{tr} \bbT^\ell_n (\bbB_n - z\bbI)^{-1} \bbK \bbQ^{-1} + o(1) \nonumber
	\end{align}

	Using
	$$ (\bbB_n- z\bbI)^{-1} = (\bbB_{nk} - z\bbI)^{-1}-\dfrac{\frac{1}{n}(\bbB_{nk} - z\bbI)^{-1}\alpha_k \alpha_k^*(\bbB_{nk} - z\bbI)^{-1}}{\beta_k},  $$
	we have
	\begin{align} \label{7216}
	&\frac{1}{np} \sum_{k=1}^{n} \bbE  \dfrac{\bbr_k^* \bbQ^{-1} \bbT^\ell_n \bbD_{nk}^{-1} r_k}{\breve{\beta}_k}\\ \nonumber
	=& \frac{1}{np}\sum_{k=1}^{n} \left[ \bbE \dfrac{\bbr_k^* \bbQ^{-1} \bbT^\ell_n \bbD_n^{-1} r_k}{\breve{\beta}_k}       % \right. \\ & \left.
	+\bbE \dfrac{\bbr_k^* \bbQ^{-1} \bbT^\ell_n \bbD_{nk}^{-1} \alpha_k \alpha_k^*\bbD_{nk}^{-1} \bbr_k}{n \breve{\beta}_k \beta_k} \right] \\ \nonumber
	=& \frac{1}{np}\sum_{k=1}^{n} \left[ \bbE \dfrac{\bbr_k^* \bbQ^{-1} \bbT^\ell_n \bbD_n^{-1} r_k}{\breve{\beta}_k}       % \right. \\ & \left.
	+\bbE \dfrac{\bbr_k^* \bbQ^{-1} \bbT^\ell_n \bbD_{nk}^{-1} \alpha_k \alpha_k^*\bbD_{nk}^{-1} \bbr_k}{n \breve{\beta}_k^2 } \right] + o(1)	\\ \nonumber
	=& \frac{1}{np}\sum_{k=1}^{n} \left[ \bbE \dfrac{\bbr_k^* \bbQ^{-1} \bbT^\ell_n \bbD_n^{-1} r_k}{\breve{\beta}_k}       % \right. \\ & \left.
	+\bbE \dfrac{\bbr_k^* \bbQ^{-1} \bbT^\ell_n \bbD_{nk}^{-1} (\bbr_k \bbr^*_k + \bbT_n)\bbD_{nk}^{-1} \bbr_k}{n \breve{\beta}_k^2 } \right] + o(1)	\\ \nonumber
	=& \frac{1}{np}\sum_{k=1}^{n} \left[ \bbE \dfrac{\bbr_k^* \bbQ^{-1} \bbT^\ell_n \bbD_n^{-1} r_k}{\breve{\beta}_k}       % \right. \\ & \left.
	+\bbE \dfrac{\bbr_k^* \bbQ^{-1} \bbT^\ell_n \bbD_{nk}^{-1}\bbr_k \bbr^*_k\bbD_{nk}^{-1} \bbr_k}{n \breve{\beta}_k^2 } \right] + o(1)
	\end{align}
	Moving the second term to the left hand side, by noticing
	\begin{align*}
	1 - \bbE \dfrac{\bbr_k^* \bbD_{nk}^{-1} \bbr_k}{\breve{\beta}_k}
	= \bbE \dfrac{1+\frac{1}{n}\mathrm{tr} \bbT_n \bbD_{nk}^{-1}}{\breve{\beta}_k} 
	= &\bbE \dfrac{1+\frac{1}{n}\mathrm{tr} \bbT_n \bbD_n^{-1}}{n \breve{\beta}_k} + O(n^{-1}) \\
	=& \dfrac{1+y \bbE g_n(z)}{n \breve{\beta}_k} + O(n^{-1}),
	\end{align*}
	we obtain
	\begin{align}\label{7217}
	\frac{1}{np} \sum_{k=1}^{n} \bbE  \dfrac{\bbr_k^* \bbQ^{-1} \bbT^\ell_n \bbD_{nk}^{-1} \bbr_k}{\breve{\beta}_k} 
	=& \frac{1}{np} \sum_{k=1}^{n}  \bbE \dfrac{\bbr_k^* \bbQ^{-1} \bbT^\ell \bbD_n^{-1} \bbr_k}{1+y \bbE g_n(z)} + o(1) \\ \nonumber
	=& \frac{1}{np}\bbE \dfrac{\mathrm{tr} \bbR_n \bbR_n^* \bbQ^{-1} \bbT^\ell_n \bbD_n^{-1}}{1+y \bbE g_n(z)} + o(1).
	\end{align}
	Similarly, we have
	\begin{align}\label{7218}
	\frac{1}{np} \sum_{k=1}^{n} \bbE \dfrac{\mathrm{tr} [\bbT_n \bbQ^{-1} \bbT^\ell_n \bbD_{nk}^{-1}] }{\breve{\beta}_k} 
	=& \frac{1}{np} \sum_{k=1}^{n} \bbE \dfrac{\mathrm{tr} [\bbT_n \bbQ^{-1} \bbT^\ell_n \bbD_{nk}^{-1}] }{\beta_k} + O(n^{-1})\\ \nonumber
	=& -\frac{1}{p} \bbE z \mathrm{tr} [\bbT_n \bbQ^{-1} \bbT^\ell_n \bbD_n^{-1}] 
	\bbE \underline{s}_n(z) + O(n^{-1}) 
	\end{align}
	Substituting \eqref{7217} and \eqref{7218} into \eqref{726}, we obtain
	\begin{align*}
	&\frac{1}{p} \mathrm{tr} \bbT^\ell_n \bbQ^{-1} - \frac{1}{p}\bbE \mathrm{tr} \bbT^\ell_n\bbD_n^{-1}\\
	=&\frac{1}{np}\dfrac{\mathrm{tr}\bbR_n \bbR_n^*\bbQ^{-1}\bbT^\ell_n\bbD_n^{-1} }{1+y \bbE g_n(z)} +\frac{1}{p}\bbE z \mathrm{tr} [\bbT_n\bbQ^{-1}\bbT^\ell_n\bbD_n^{-1}] \bbE \underline{s}_n(z) + o(1) - \frac{1}{p} \bbE \bbD_n^{-1} K \bbQ^{-1}.
	\end{align*}
	Therefore, we will have
	\begin{align*}
	\frac{1}{p}  \mathrm{tr} \bbT^l_n \bbQ^{-1} - \frac{1}{p}\bbE \mathrm{tr} \bbT^l_n(\bbB_n-z\bbI)^{-1} \to 0, \ \mathrm{for} \ \ell=0,1.
	\end{align*}
	
	Examining the above proof, we find there are the following errors yield in the transformations:
	\begin{itemize}
		\item [(1)] In \eqref{726} due to the change $ \beta_k \to \breve{\beta}_k $, 
		\begin{align}\label{7469}
		e_1 = \dfrac{1}{p} \sum_{k=1}^{p} \bbE \alpha_k^* \bbQ^{-1} \bbT^{\ell} \bbD^{-1}_{nk} \alpha_k \left(  \dfrac{1}{\beta_k}- \dfrac{1}{\breve{\beta}_k} \right);
		\end{align}
		
		\item [(2)]  In \eqref{7216} again due to the change $ \beta_k \to \breve{\beta}_k $, 
		\begin{align}\label{7470}
		e_2 = \dfrac{1}{p} \sum_{k=1}^{p} \bbE \frac{ \bbr_k^* \bbQ^{-1} \bbT^{\ell} \bbD^{-1}_{nk} \alpha_k\alpha_k^*\bbD^{-1}_{nk} \bbr_k  }{n \breve{\beta}_k} \left(  \dfrac{1}{\beta_k}- \dfrac{1}{\breve{\beta}_k} \right);
		\end{align} 
		
		\item [(3)] And removing the term involved $ \bbT $
		\begin{align}\label{7471}
		e_3 = \dfrac{1}{np} \sum_{k=1}^{p} \bbE \frac{ \bbr_k^* \bbQ^{-1} \bbT^{\ell} \bbD^{-1}_{nk} \bbT \bbD^{-1}_{nk} \bbr_k  }{n \breve{\beta}_k};
		\end{align} 
		
		\item [(4)] In \eqref{7217} changing $ g_n(z) \to \bbE g_n(z) $ in the denominator
		\begin{align}\label{7472}
		e_4 = \dfrac{1}{np} \sum_{k=1}^{p} \bbE \frac{ \bbr_k^* \bbQ^{-1} \bbT^{\ell} \bbD^{-1}_{nk} \bbT \bbD^{-1}_{nk} \bbr_k  }{1 + y_n \bbE g_n } 
		\frac{ y_n(\bbE g_n - g_n)}{1 + y_n g_n};
		\end{align} 
		
		\item [(5)] In \eqref{7218} changing $ \breve{\beta}_k $ back to $ \beta_k $ 
		\begin{align}\label{7473}
		&e_5 = \dfrac{1}{np} \sum_{k=1}^{n} \bbE \frac{ \mathrm{tr} \bbT_n \bbQ^{-1} \bbT^{\ell}_n (\bbD^{-1}_{nk}-\bbD^{-1}_n) }{ \breve{\beta}_k};\\ \nonumber
		&e_6 = \dfrac{1}{np} \sum_{k=1}^{n} \bbE \mathrm{tr} [\bbT_n \bbQ^{-1} \bbT^{\ell}_n \bbD^{-1}_n ] \left(  \dfrac{1}{\breve{\beta}_k}- \dfrac{1}{\beta_k} \right);
		\end{align}
	\end{itemize}
	
	Before estimating the six errors, we point out that  
	$ \bbE \lvert \varepsilon_k \rvert ^2= O(n^{-1}) $ when $ z = x + i v_n $ with $ x \in [a, b] $, whose proof is the same as that for $ \eqref{7455}(b) $.  And the order $ O(n^{-1}) $ in the refined estimate is independent of $ v_n $, but $ x $ should be restricted in [a, b].
	And the estimates $ \left|  \dfrac{1}{\beta_k} \right|  $ and $ \left|  \dfrac{1}{\breve{\beta}_k} \right| $ are improved to be bounded in moment in Lemma \ref{lemma729}. 
	
	In Lemma \ref{lemma725}, it is proved that $ \bbQ^{-1} $ is bounded when $ x \in [a, b] $. We can get that $ \bbQ^{-1}$ is also bounded $ x \in [a', b'] $. 
	The estimate $ \bbE \lvert  \alpha_k^* \bbQ^{-1} \bbT_n^\ell \bbD_{nk}^{-1} \alpha_k \rvert^2=O(1)  $ remains unchanged as $ x \in [a, b] $ which can be proved by similar approach as showing $ \eqref{7455}(b) $. 
	%Note that we have used the conclusion that $ Q^{-1}$ is bounded when $ x \in [a, b] $.
	
	Using the identity
	\begin{align}\label{7474}
	\dfrac{1}{\beta_k} - \dfrac{1}{\breve{\beta}_k} = -\dfrac{\varepsilon_k}{\breve{\beta}_k^2} + \dfrac{\varepsilon_k^2}{\breve{\beta}_k^2 \beta_k} .
	\end{align}
	The error term $ e_1 $ in $ \eqref{7469} $ is split into two terms:
	\begin{align}\label{7475}
	e_{11} = -\dfrac{1}{p} \sum_{k=1}^{p}  \bbE \frac{\varepsilon_k \alpha_k^* \bbQ^{-1} \bbT^{\ell}_n \bbD^{-1}_{nk} \alpha_k }{\breve{\beta}_k^2},
	\end{align}
	and
	\begin{align}\label{7476}
	e_{12} = \dfrac{1}{p} \sum_{k=1}^{p}  \bbE \frac{\varepsilon_k^2 \alpha_k^* \bbQ^{-1} \bbT^{\ell}_n \bbD^{-1}_{nk} \alpha_k }{\breve{\beta}_k^2 \beta_k},
	\end{align}
	
	Let $ \bbE_{(k)} $ denote the conditional expectation given all random vectors except
	$ \bbx_k $. 
	Then by Cauchy-Schwarz and Lemma \ref{lemma729}, when $ x \in [a, b] $, we obtain
	\begin{align*}
	\lvert e_{11} \rvert  & \leq  \dfrac{1}{p} \sum_{k=1}^{p} 
	\left|   \bbE \frac{\varepsilon_k(\alpha_k^*\bbQ^{-1} \bbT_n^{\ell} \bbD_{nk}^{-1} \alpha_k - \bbE_{(k)} \alpha_k^* \bbQ^{-1} \bbT_n^{\ell} \bbD_{nk}^{-1} \alpha_k )}{\breve{\beta}_k^2}    \right| \\
	& \leq \dfrac{K}{p} \sum_{k=1}^{p} \bbE \lvert  \varepsilon_k( \alpha_k^*\bbQ^{-1} \bbT_n^{\ell} \bbD_{nk}^{-1} \alpha_k - \bbE_{(k)} \alpha_k^* \bbQ^{-1} \bbT_n^{\ell} \bbD_{nk}^{-1} \alpha_k  )   \rvert \\
	& \leq  \dfrac{K}{p}   \left(  \sum_{k=1}^{p} \bbE \lvert \varepsilon_k \rvert^2  \sum_{k=1}^{p} \bbE  \lvert \alpha_k^*\bbQ^{-1} \bbT_n^{\ell} \bbD_{nk}^{-1} \alpha_k - \bbE_{(k)} \alpha_k^* \bbQ^{-1} \bbT_n^{\ell} \bbD_{nk}^{-1} \alpha_k  \rvert^2  \right) ^{1/2} \\
	& = O(n^{-1}),
	\end{align*}
	where the proof of the last step is similar to that of $ \eqref{7455}(b) $. 
	At the same time,
	\begin{align*}
	\lvert e_{12} \rvert  & \leq  \dfrac{1}{p}  \left|   \sum_{k=1}^{p} 
	\bbE \frac{\varepsilon_k^2 \alpha_k^*\bbQ^{-1} \bbT_n^{\ell} \bbD_{nk}^{-1} \bbT_n \bbD_{nk}^{-1} \alpha_k }{\breve{\beta}_k^2 \beta_k }    \right| \\
	& \leq \dfrac{C}{p} \sum_{k=1}^{p} \bbE \lvert  \varepsilon_k^2 
	\alpha_k^*\bbQ^{-1} \bbT_n^{\ell} \bbD_{nk}^{-1} \bbT_n \bbD_{nk}^{-1} \alpha_k    \rvert \\
	& \leq  \dfrac{C}{p} \sum_{k=1}^{p}  ( \bbE \lvert \varepsilon_k^2  \bbE_{(k)} \alpha_k^*\bbQ^{-1} \bbT_n^{\ell} \bbD_{nk}^{-1} \bbT_n \bbD_{nk}^{-1} \alpha_k   \rvert \\
	& \quad +  \bbE \lvert \varepsilon_k^2 ( \alpha_k^*\bbQ^{-1} \bbT_n^{\ell} \bbD_{nk}^{-1} \bbT_n \bbD_{nk}^{-1} \alpha_k   - \bbE_{(k)} \alpha_k^*\bbQ^{-1} \bbT_n^{\ell} \bbD_{nk}^{-1} \bbT_n \bbD_{nk}^{-1} \alpha_k )  \rvert   ) \\
	& = O(n^{-1}) + o(n^{-1}),
	\end{align*}
	where the estimates in the last step, the first term is similar to that of
	$ \eqref{7455}(b) $ and the second term follows by Cauchy-Schwarz and
	$$ \bbE \lvert \alpha_k^*\bbQ^{-1} \bbT_n^{\ell} \bbD_{nk}^{-1} \bbT_n \bbD_{nk}^{-1} \alpha_k   - \bbE_{(k)} \alpha_k^*\bbQ^{-1} \bbT_n^{\ell} \bbD_{nk}^{-1} \bbT_n \bbD_{nk}^{-1} \alpha_k \rvert^2 
	\leq  C n^{-1} v_n^{-2},$$
	and
	$$ \bbE \lvert \varepsilon_k^4 \rvert \leq C n^{-2} v_n^{-4}, $$
	provided $ v_n = n^{-\delta} $ with $ \delta < 1/3 $.
	
	To evaluate $ e_2 $, notice that
	$$  \bbE_{(k)} \alpha_k \alpha_k^* = \dfrac{1}{n} ( \bbr_k \bbr_k^* + T_n^{1/2}  \bbD^{-1}_{nk} \bbT^{1/2}_n ) , $$
	and consequently we have
	$$  \lvert e_2 \rvert  \leq \lvert e_{21} \rvert  + \lvert e_{22}  \rvert + \lvert e_{23}  \rvert , $$
	where
	\begin{align*}
	\lvert e_{21} \rvert = & \dfrac{1}{p} 
	\left|   \sum_{k=1}^{p} \bbE \frac{\bbr_k^*\bbQ^{-1} \bbT_n^{\ell} \bbD_{nk}^{-1}
		(\alpha_k \alpha_k^* - \bbE_{(k)} \alpha_k \alpha_k^*  ) \bbD_{nk}^{-1} \bbr_k }{n \breve{\beta}_k}  \left( \dfrac{1}{\beta_k} - \dfrac{1}{\breve{\beta}_k}  \right)  \right| \\
	\leq & \dfrac{C}{np} \left(  \sum_{k=1}^{p} \bbE \frac{ \lvert \bbr_k^*\bbQ^{-1} \bbT_n^{\ell} \bbD_{nk}^{-1} (\alpha_k \alpha_k^* - \bbE_{(k)} \alpha_k \alpha_k^*  ) \bbD_{nk}^{-1} \bbr_k \rvert^2 }{ \breve{\beta}_k^4} \sum_{k=1}^{p} \bbE \lvert \varepsilon_k \rvert^2  \right) ^{1/2} \\
	\leq  &  \dfrac{C}{n^2 p} 
	\left( \sum_{k=1}^{p} \bbE \breve{\beta}_k^{-4} \left( \lvert  \bbr_k^*\bbQ^{-1} \bbT^{\ell} \bbD_{nk}^{-1} \bbr_k \rvert^2 \|  \bbT_n^{1/2} \bbD_{nk}^{-1} \bbr_k  \|^2   \right. \right.\\ & \left. \left. 
	+  \bbE \|   \bbr_k^*\bbQ^{-1} \bbT^{\ell} \bbD_{nk}^{-1} \bbT_n^{1/2} \|^2 \lvert \bbr_k^* \bbD_{nk}^{-1} \bbr_k  \rvert^2   
	+  \bbE \|  \bbr_k^* \bbQ^{-1} \bbT_n^{\ell} \bbD_{nk}^{-1} \bbT_n^{1/2} \|^2    \|  \bbr_k^*\bbD_{nk}^{-1} \bbT_n^{1/2}  \|^2      \right. \right.\\ & \left. \left.    
	+ \bbE \lvert  \bbr_k^*\bbQ^{-1} \bbT_n^{\ell} \bbD_{nk}^{-1} \bbT_n \overline{\bbD}_{nk}^{-1} \bbr_k  \rvert^2  \right) \right) ^{1/2} 
	\end{align*}
	where we have used the fact that
	$$  \sup_{x\in [a,b]} \sum_{k=1}^{p} \bbE \lvert  \varepsilon_k \rvert^2= O(1).  $$
	By Lemma $ \ref{lemma730} $, we have
	\begin{align*}
	&\sum_{k=1}^{p} \bbE \breve{\beta}_k^{-4}  \lvert  \bbr_k^*\bbQ^{-1} \bbT^{\ell} \bbD_{nk}^{-1} \bbr_k  \rvert^2  \|  \bbT_n^{1/2} \bbD_{nk}^{-1} \bbr_k  \|^2
	\leq C n \sum_{k=1}^{p} \bbE \breve{\beta}_k^{-4} (\bbr^*_k \overline{\bbD}_{nk}^{-1} \bbD^{-1}_{nk} \bbr_k)^2 \\
	\stackrel{m}{=} & C n \sum_{k=1}^{p} \bbE (1 + y g_0(z))^{-4} (\bbr^*_k \overline{\bbD}_{n}^{-1} \bbD^{-1}_{n} \bbr_k)^2  
	\leq C n^2 \sum_{k=1}^{p} \bbE \bbr^*_k \overline{\bbD}_{n}^{-2} \bbD^{-2}_{n} \bbr_k \\
	=& C n^2 \mathrm{tr} \bbR \bbR^* \overline{\bbD}^{-1}_n \bbD^{-1}_n = O(n^4),
	\end{align*}
	where the first inequality follows by Cauchy-Schwarz with $ \| \bbr^*_k
	\bbQ^{-1} \bbT^{\ell}_n \|^2 = O(n )$ and $ \bbQ^{-1} $, $ \bbT_n $ are bounded in spectral norm; the second inequality employs the Cauchy-Schwarz again and 
	$ \|\bbr_k\|^2 = O(n) $; 
	and the last estimation follows by assumption $ F([a, b]) = o(v^4_n) $.
	
	To estimate the second term, let $ \bbv_k = \bbT^{\ell}_n \bbQ^{-1} \bbr_k $. 
	We have
	\begin{align*}
	&	\bbr^*_k \bbD^{-1}_{n} \overline{\bbD}_{n}^{-1}  \bbv_k \\
	=& \bbr^*_k \bbD^{-1}_{nk} \overline{\bbD}_{nk}^{-1}  \bbv_k  
	- \frac{\bbr_k^* \bbD_{nk}^{-1} \alpha_k  \alpha_k^* \bbD_{nk}^{-1} \overline{\bbD}_{nk}^{-1}  \bbv_k}{\beta_k} 
	- \frac{\bbr_k^* \bbD_{nk}^{-1} \overline{\bbD}_{nk}^{-1} \alpha_k  \alpha_k^* \overline{\bbD}_{nk}^{-1}  \bbv_k}{\overline{\beta}_k}  \\
	&+ \frac{\bbr_k^* \bbD_{nk}^{-1} \alpha_k  \alpha_k^* \bbD_{nk}^{-1} \overline{\bbD}_{nk}^{-1}  \alpha_k  \alpha_k^* \overline{\bbD}_{nk}^{-1}  \bbv_k}{\lvert \beta_k \rvert^2 } \\
	\stackrel{m}{=}&   \bbr^*_k \bbD^{-1}_{nk} \overline{\bbD}_{nk}^{-1}  \bbv_k
	- \frac{\bbr_k^* \bbD_{nk}^{-1} (\bbr_k  \bbr_k^* + \bbT_n ) \bbD_{nk}^{-1} \overline{\bbD}_{nk}^{-1}  \bbv_k}{n \breve{\beta}_k} 
	- \frac{\bbr_k^* \bbD_{nk}^{-1} \overline{\bbD}_{nk}^{-1} (\bbr_k  \bbr_k^* + \bbT_n ) \overline{\bbD}_{nk}^{-1}  \bbv_k}{n \overline{\breve{\beta}}_k}\\
	& + \frac{\bbr_k^* \bbD_{nk}^{-1} (\bbr_k  \bbr_k^* + \bbT_n )  \overline{\bbD}_{nk}^{-1}  (\bbr_k  \bbr_k^* + \bbT_n )  \bbD_{nk}^{-1} \overline{\bbD}_{nk}^{-1} \bbv_k}{n^2 \lvert \breve{\beta}_k\rvert^2} \\
	\stackrel{m}{=}& \bbr^*_k \bbD^{-1}_{nk} \overline{\bbD}_{nk}^{-1}  \bbv_k 
	-  \frac{\bbr_k^* \bbD_{nk}^{-1} \bbr_k  \bbr_k^*  \bbD_{nk}^{-1} \overline{\bbD}_{nk}^{-1}  \bbv_k}{n \breve{\beta}_k}  
	- \frac{\bbr_k^* \bbD_{nk}^{-1} \overline{\bbD}_{nk}^{-1} \bbr_k  \bbr_k^*  \overline{\bbD}_{nk}^{-1}  \bbv_k}{n \overline{\breve{\beta}}_k} \\
	& + \frac{\bbr_k^* \bbD_{nk}^{-1} \bbr_k  \bbr_k^* \bbD_{nk}^{-1} \overline{\bbD}_{nk}^{-1}  \bbr_k  \bbr_k^*  \overline{\bbD}_{nk}^{-1} \bbv_k}{n^2 \lvert \breve{\beta}_k\rvert^2}+ O(v_n^{-4})\\
	\stackrel{m}{=}& \left( \frac{1 + y g_0(z)}{\breve{\beta}_k} \bbr_k^* \bbD_{nk}^{-1} \overline{\bbD}_{nk}^{-1} \bbv_k  
	- \frac{\bbr_k^* \bbD_{n}^{-1} \overline{\bbD}_{n}^{-1} \bbr_k \bbr_k^* \overline{\bbD}_{n}^{-1} \bbv_k \lvert \breve{\beta}_k \rvert^2 }{n\lvert 1 + y g_0(z)\rvert^2(1 + y \overline{g}_0(z))}  \right. \\ & \left.
	+  \frac{\bbr_k^* \bbD_{n}^{-1} \bbr_k \bbr_k^* \bbD_{n}^{-1} \overline{\bbD}_{n}^{-1}  \bbr_k \bbr_k^* \overline{\bbD}_{n}^{-1} \bbv_k \lvert \breve{\beta}_k \rvert^2 }{n^2 \lvert 1 + y g_0(z)\rvert^4}  + O(v_n^{-4}) \right) 
	\end{align*}
	Smilarly, we have
	\begin{align*}
	& \bbv^*_k \bbD_{n}^{-1} \overline{\bbD}_{n}^{-1} \bbv_k 
	\stackrel{m}{=} \bbv^*_k \bbD_{nk}^{-1} \overline{\bbD}_{nk}^{-1} \bbv_k 
	- \frac{\bbv^*_k \bbD_{n}^{-1} \bbr_k \bbr_k^* \bbD_{nk}^{-1} \overline{\bbD}_{nk}^{-1} \bbv_k }{n(1 + y g_0(z))} \\
	& - \frac{\bbv^*_k \bbD_{nk}^{-1}  \overline{\bbD}_{nk}^{-1} \bbr_k \bbr_k^*\overline{\bbD}_{n}^{-1} \bbv_k }{n(1 + y \overline{g}_0(z))}  
	+ \frac{\bbv^*_k \bbD_{n}^{-1} \bbr_k \bbr_k^*  \bbD_{n}^{-1}   \overline{\bbD}_{n}^{-1} \bbr_k \bbr_k^*\overline{\bbD}_{n}^{-1} \bbv_k \lvert \breve{\beta}_k \rvert^2 }{n^2 \lvert 1 + y g_0(z)\rvert^4 }   .
	\end{align*}
	Therefore,
	\begin{align}\label{7477}
	&	\bbv^*_k \bbD_{nk}^{-1} \overline{\bbD}_{nk}^{-1} \bbv_k \\ \nonumber
	=& \bbv^*_k \bbD_{n}^{-1} \overline{\bbD}_{n}^{-1} \bbv_k 
	+ 2 \Re \left(   \frac{\bbv^*_k \bbD_{n}^{-1} \bbr_k \bbr_k^* \bbD_{n}^{-1} \overline{\bbD}_{nk}^{-1} \bbv_k \breve{\beta}_k }{n(1 + y g_0(z))^2 }  
	+ \frac{ \bbr^*_k \bbD_{n}^{-1} \overline{\bbD}_{n}^{-1} \bbr_k  \lvert  \bbr_k^* \bbD_{n}^{-1} \bbv_k \rvert^2 \lvert \breve{\beta}_k \rvert^2 \breve{\beta}_k }{n \lvert 1 + y g_0(z) \rvert^4 }  
	\right. \\ \nonumber & \left.
	+ \frac{ \bbr^*_k \bbD_{n}^{-1}  \bbr_k   \bbr_k^* \bbD_{n}^{-1}  \overline{\bbD}_{n}^{-1} \bbr_k   \bbr_k^* \overline{\bbD}_{n}^{-1}  \bbv_k  \lvert \breve{\beta}_k \rvert^2 \breve{\beta}_k }{n^2 \lvert 1 + y g_0(z) \rvert^4 (1 + y g_0(z))}  \right) 
	- \frac{ \lvert  \bbv^*_k \bbD_{n}^{-1} \bbr_k  \rvert^2 \bbr^*_k \bbD_{n}^{-1} \overline{\bbD}_{n}^{-1} \bbr_k  \lvert \breve{\beta}_k \rvert^2 }{n^2 \lvert 1 + y g_0(z) \rvert^4 }  
	\end{align}
	Therefore, the second term in $ e_{21} $ is dominated by
	$$ \dfrac{1}{n^2p} \left(  \sum_{k=1}^{p} \bbE \lvert \breve{\beta}_k \rvert^{-4}   \bbv^*_k \bbD_{nk}^{-1} \overline{\bbD}_{nk}^{-1} \bbv_k \lvert  \bbr^*_k \bbD_{nk}^{-1} \bbr_k \rvert^2  \right)^{1/2} = O(n^{-1} ) $$
	which can be derived by substituting $ \eqref{7477} $ and applying Lemma $ \ref{lemma730} $.
	\begin{align*}
	\lvert e_{22} \rvert  &= \dfrac{1}{np} \left|   \sum_{k=1}^{p} \bbE  \frac{\bbr_k^*\bbQ^{-1} \bbT^{\ell} \bbD_{nk}^{-1} \bbT \bbD_{nk}^{-1} \bbr_k }{n \breve{\beta}_k}  \left( \dfrac{1}{\beta_k} - \dfrac{1}{\breve{\beta}_k}  \right)   \right| \\
	& \leq \dfrac{C}{n p v_n^2} \sum_{k=1}^{p} \bbE \lvert \varepsilon_k \rvert 
	= O(n^{-3/2} v^{-4}_n) = O(n^{-1}). 
	\end{align*}
	\begin{align*}
	\lvert e_{23} \rvert  &= \dfrac{1}{np} \left|   \sum_{k=1}^{p} \bbE  \frac{\bbr_k^*\bbQ^{-1} \bbT^{\ell} \bbD_{nk}^{-1} \bbr_k \bbr_k^* \bbD_{nk}^{-1} \bbr_k }{n \breve{\beta}_k}  \left( \dfrac{1}{\beta_k} - \dfrac{1}{\breve{\beta}_k}  \right)   \right| \\
	&= \dfrac{1}{n^2 p} \left|   \sum_{k=1}^{p} \bbE  \frac{\bbr_k^*\bbQ^{-1} \bbT^{\ell} \bbD_{nk}^{-1} \bbr_k \bbr_k^* \bbD_{nk}^{-1} \bbr_k }{ \breve{\beta}_k}  
	\left( \dfrac{\varepsilon_k^2}{\breve{\beta}_k^3} -\dfrac{\varepsilon_k^3}{\breve{\beta}_k^3  \beta_k}  \right)   \right| \\
	& \leq  \dfrac{1}{n^2 p}   \sum_{k=1}^{p}  \frac{\bbr_k^*\bbQ^{-1} \bbT^{\ell} \bbD_{nk}^{-1} \bbr_k \bbr_k^* \bbD_{nk}^{-1} \bbr_k }{ \lvert \breve{\beta}_k \rvert^4}  \bbE \lvert \varepsilon_k \rvert^3  + O(n^{-3/2}) v_n^{-5}\\
	& \leq  \dfrac{C}{n^4 p}  \sum_{k=1}^{p} \bbE \frac{\bbr_k^*\bbQ^{-1} \bbT^{\ell} \bbD_{n}^{-1} \bbr_k \bbr_k^* \bbD_{n}^{-1} \bbr_k }{ \lvert \breve{\beta}_k \rvert^2 \lvert  1+y g_n(z) \rvert^2} \left(  \bbr_k^* \bbD_{nk}^{-1} \breve{\beta}_{nk}^{-1} \bbr_k + \mathrm{tr} \bbT_n^2   \right)  
	+ O(n^{-1}) \\
	&= O(n^{-1}).
	\end{align*}
	The estimation of $ e_3 $ and $ e_4 $ are as follows,
	$$  \lvert e_3 \rvert  \leq  \dfrac{1}{n^2 p} \left(  \sum_{k=1}^{p} \bbE \frac{\bbv_k^* \bbD_{nk}^{-1} \overline{\bbD}_{nk}^{-1} \bbv_k  }{\lvert \breve{\beta}_k\rvert^2  }    \sum_{k=1}^{p}  \frac{\bbr_k^* \overline{\bbD}_{nk}^{-1} \bbD_{nk}^{-1}  \bbr_k  }{\lvert \breve{\beta}_k\rvert^2  } \right)^{1/2}= O(n^{-1}).  $$
	and
	\begin{align*}
	&\lvert e_4 \rvert  =  \dfrac{1}{n p} \left|   \sum_{k=1}^{p} \bbE \frac{\bbr_k^* \bbQ^{-1} \bbT^{\ell}  \bbD_{n}^{-1} \bbr_k - \bbE \bbr_k^* \bbQ^{-1} \bbT^{\ell}  \bbD_{n}^{-1} \bbr_k }{1+y_n \bbE g_n  }  \frac{y_n (\bbE g_n -g_n) }{1+y_n g_n  }    \right|  \\
	\leq &\dfrac{C}{n p} \left(  \bbE \lvert  \mathrm{tr} \bbR_n \bbR_n^*\bbQ^{-1} \bbT^{\ell} \bbD_{n}^{-1} - \bbE \mathrm{tr}  \bbR_n \bbR_n^*\bbQ^{-1} \bbT^{\ell} \bbD_{n}^{-1}  \rvert^2 \bbE (g_n(z) - \bbE g_n(z))^2  \right)^{1/2} \\
	= & O(n^{-1}).
	\end{align*}
	In the proof for the last step, we have used conclusions $ \bbE \lvert g_n - \bbE g_n\rvert^2 = O(n^{-1}) $
	which is $ \eqref{7456}(b) $ with $ \ell= 1 $ and
	$$  \bbE \lvert \dfrac{1}{p} \mathrm{tr} (\bbR_n \bbR^*_n)/n \bbQ^{-1} \bbT^{\ell} \bbD_n^{-1}  - \bbE \mathrm{tr} (\bbR_n \bbR^*_n)/n \bbQ^{-1} \bbT^{\ell} \bbD_n^{-1} \rvert^2 = O(n^{-1})  $$
	which is the same as $ \eqref{7456}(b) $ when the matrix $ \bbT_n $ in $ \mathrm{tr}(\bbT_n \bbD_n^{-1} ) $ is replaced by $ (\bbR_n \bbR^*_n)/n \bbQ^{-1} \bbT^{\ell} $.
	
	The error $ e_5 $ can be estimated as
	\begin{align*}
	\lvert e_5 \rvert  &=\dfrac{1}{np} \left| 
	\sum_{k=1}^{p} \bbE\frac{ \mathrm{tr} [\bbT_n \bbQ^{-1} \bbT^{\ell}_n \{ \bbD^{-1}_{nk} \alpha_k \alpha_k^* \bbD^{-1}_{nk} \} ] }{\breve{\beta}_k \beta_k} \right| \\
	&\stackrel{m}{=}\dfrac{1}{n^2p}   \left| 
	\sum_{k=1}^{p} \bbE\frac{ \mathrm{tr} [\bbT_n \bbQ^{-1} \bbT^{\ell}_n \{ \bbD^{-1}_{nk} (\bbr_k \bbr_k^* + \bbT_n )\bbD^{-1}_{nk} \} ] }{\breve{\beta}_k^2 } \right| \\
	& \leq \dfrac{1}{n^2 p}  \sum_{k=1}^{p} \left|   (\bbr^*_k \bbD^{-1}_{nk}
	\overline{\bbD}^{-1}_{nk}  \bbr_k + \mathrm{tr}(\bbD_{nk}^{-1} \overline{\bbD}^{-1}_{nk} ))  \right| 
	= O(n^{-1}).
	\end{align*}
	
	As for the last error $ e_6 $, we have
	\begin{align*}
	\lvert e_6 \rvert  &=\dfrac{1}{np} \left| \sum_{k=1}^{p}  \bbE \mathrm{tr} [\bbT_n \bbQ^{-1} \bbT^{\ell}_n \bbD^{-1}_{n} ] \left( \dfrac{\varepsilon_k}{\breve{\beta}_k^2} -\dfrac{\varepsilon_k^2}{\breve{\beta}_k^2 \beta_k}  \right)  \right| \\
	&\leq \dfrac{C}{n p}  \sum_{k=1}^{p}  \left[  \left| 
	\bbE \mathrm{tr} [\bbT_n \bbQ^{-1} \bbT^{\ell}_n  ( \bbD^{-1}_{n}-\bbE \bbD_n^{-1} ) ]  \dfrac{\varepsilon_k}{\breve{\beta}_k^2 }  \right| + \bbE \lvert \mathrm{tr} [\bbT_n \bbQ^{-1} \bbT^{\ell}_n  \bbD^{-1}_{n}] \rvert \dfrac{\lvert \varepsilon_k \rvert^2}{ \lvert \breve{\beta}_k\rvert^2 }   \right] \\
	& \leq  O(n^{-1}),
	\end{align*}
	where the first term was estimated in $ e_4 $ and the second term can be estimated
	rountinely.
	
	Now, let $ s^0_n $ and $ g^0_n $ be the solutions to the system of equations
	$$ s^0_n= \int  \frac{\mathrm{d} H_n(u, t)}{\dfrac{u}{1+y_n g^0_n} - (1 + t y_n s^0_n)z + t(1 - y_n) }, $$
	$$ g^0_n  = \int  \frac{ t \mathrm{d} H_n(u, t)}{\dfrac{u}{1+y_n g^0_n} - (1 + t y_n s^0_n)z + t(1 - y_n) }.$$
	From these and $ \eqref{7468} $, we have
	\begin{align*}
	\lvert \bbE s_n - s^0_n \rvert  &= \left|  \int  \frac{\mathrm{d} H_n(u, t)}{\dfrac{u}{1+y_n \bbE g_n} - (1 + t y_n \bbE s_n)z + t(1 - y_n) } \right. \\ & \left.
	-\int  \frac{\mathrm{d} H_n(u, t)}{\dfrac{u}{1+y_n g^0_n} - (1 + t y_n s^0_n)z + t(1 - y_n) } + O(n^{-1})\right| \\
	& \leq  y_n \widetilde{A}_1 \lvert \bbE g_n - g^0_n \rvert  + y_n  \widetilde{B}_1 \lvert \bbE s_n - s^0_n \rvert  + O(n^{-1}),\\
	\lvert \bbE g_n - g^0_n \rvert  &= \left|  \int  \frac{ t \mathrm{d} H_n(u, t)}{\dfrac{u}{1+y_n \bbE g_n} - (1 + t y_n \bbE s_n)z + t(1 - y_n) } \right. \\ & \left.
	-\int  \frac{ t \mathrm{d} H_n(u, t)}{\dfrac{u}{1+y_n g^0_n} - (1 + t y_n s^0_n)z + t(1 - y_n) } + O(n^{-1})\right| \\
	& \leq  y_n \widetilde{A}_2 \lvert \bbE g_n - g^0_n \rvert  + y_n  \widetilde{B}_2 \lvert \bbE s_n - s^0_n \rvert  + O(n^{-1}),
	\end{align*}
	where
	\begin{align*}
	\widetilde{A}_j & = \int  \frac{  \frac{u t^{j-1}}{ \lvert 1+y_n \bbE g_n \rvert \lvert 1+y_n g^0_n \rvert }   \mathrm{d} H_n(u, t)}{\left| \dfrac{u}{1+y_n \bbE g_n}- (1 + y_n \bbE s_n)z + t (1 - y_n) \right|    \left| \dfrac{u}{1+y_n g^0_n} - (1 +  y_n s^0_n)z + t(1 - y_n)\right|  }  \\
	& \to  \widetilde{A}_{j0} = \int  \frac{  \frac{u t^{j-1}}{  \lvert 1+y_n g_0 \rvert^2 }   \mathrm{d} H_n(u, t)}{ \left| \dfrac{u}{1+yg_0} - (1 + y s_0)z + t(1 - y )\right|^2  } , j = 1, 2, \\
	\widetilde{B}_j & = \int  \frac{  t^j   \mathrm{d} H_n(u, t)}{\left| \dfrac{u}{1+y_n \bbE g_n}- (1 + y_n \bbE s_n)z + t (1 - y_n) \right|    \left| \dfrac{u}{1+y_n g^0_n} - (1 +  y_n s^0_n)z + t(1 - y_n)\right|  }  \\
	& \to  \widetilde{B}_{j0} = \int  \frac{  t^j \mathrm{d} H_n(u, t)}{ \left| \dfrac{u}{1+yg_0} - (1 +  y s_0)z + t(1 - y )\right|^2  } , j = 1, 2. 
	\end{align*}
	From these, one can easily derive that
	$$  \lvert \bbE s_n - s^0_n \rvert  \leq  (1 - y_n \widetilde{B}_1 - y^2_n \widetilde{A}_1 \widetilde{B}_2 (1 - y_n \widetilde{A}_2)^{-1})^{-1}O(n^{-1}) = O(n^{-1}).  $$
	In the above, the convergence of $ \widetilde{A}_j ( \widetilde{B}_j ) $ to $ \widetilde{A}_{j0}(\widetilde{B}_{j0}) $ follows by DCT and
	$ (1 - y_n \widetilde{B}_1 - y^2_n \widetilde{A}_1 \widetilde{B}_2 (1 - y_n \widetilde{A}_2)^{-1})^{-1} $ has an upper bound follows from the convergence and 
	$ y \widetilde{B}_{10} - y^2 \widetilde{A}_{10} \widetilde{B} _{20} (1 - y \widetilde{A}_{20})^{-1} < 1 $ which can be showing by comparing the imaginary part of \eqref{712.1}.
	
	Therefore, we can get for all $ n $ sufficiently large, 
	\begin{align*}%\label{7467}
	\sup_{x\in[a,b]} \lvert \bbE s_n(x + i v_n) - s^0_n(x + i v_n)\rvert = O(n^{-1}). 
	\end{align*}
	which is \eqref{7467}.
	
	%Combining $ \eqref{7455} $ (b) and $ \eqref{7467} $, for any $ \delta \in (0, 1/8) $,  $ v_n = n^{-\delta} $  and $ x\in [a, b], $
	%\begin{align}\label{7478}
	%	\sup_{x\in [a, b]} (n v_n) \lvert s_n (x + i v_n) - s^0_n(x + i v_n)\rvert = o _{a.s.}(1). 
	%\end{align}
	%Using the same approach to show Theorem 6.1, we may complete the proof of Theorem $ \ref{th7221} $.

	\section{Completing the proof}
	
	From the last two sections, combining $ \eqref{7455} $ (b) and $ \eqref{7467} $, for any $ \delta \in (0, 1/64) $, $ v_n = n^{-\delta} $  and $ x\in [a, b], $ we get 
	\begin{align}\label{7478}
	\sup_{x\in [a, b]} (n v_n) \lvert s_n (x + i v_n) - s^0_n(x + i v_n)\rvert = o(1) \quad \mathrm{a.s.}.
	\end{align}
	
	It is clear that \eqref{7478} is true when the imaginary part of $ z $ is replaced by a constant multiple
	of $ v_n $. So we have
	\begin{align*}
	\sup_{x\in [a, b]}  \lvert s_n (x + i\sqrt{k} v_n) - s^0_n(x + i\sqrt{k} v_n)\rvert = o(1/(n v_n)) \quad \mathrm{a.s.}.
	\end{align*}
	Taking the imaginary part and taking differences, we get
	\begin{align*}
	&\sup_{x\in[a,b]} \left| \int \dfrac{\mathrm{d}(F^{\bbB_n}(\lambda) - F^{y_n,H_n}(\lambda))}{	((x - \lambda)^2 + v_n^2 )((x - \lambda)^2 + 2v_n^2) \cdots ((x - \lambda)^2 + pv_n^2)}\right|=o(1),  \ \mathrm{a.s.} 
	\end{align*} 
	We split up the integral and get
	\begin{align}\label{6251}
	&\sup_{x\in[a,b]} \left| \int \dfrac{I([a^{'}, b^{'}]^c)\mathrm{d}(F^{\bbB_n}(\lambda) - F^{y_n,H_n}(\lambda))}{	((x - \lambda)^2 + v_n^2 )((x - \lambda)^2 + 2v_n^2) \cdots ((x - \lambda)^2 + pv_n^2)} \right. \\ \nonumber & \left.
	+ \sum_{ \lambda_j \in [a^{'}, b^{'}] }\dfrac{v_n^{2p}}{ ((x - \lambda)^2 + v_n^2 )((x - \lambda)^2 + 2v_n^2) \cdots ((x - \lambda)^2 + pv_n^2)}     \right|=o(1)	,  \ \mathrm{a.s.} 
	\end{align}  
	
	Now if for each term in a subsequence satisfying \eqref{6251}, there is at least
	one eigenvalue contained in $ [a,b] $, then the sum in \eqref{6251} will be uniformly bounded away from 0. Thus, the integral in \eqref{6251} must also stay uniformly bounded away from $ 0 $. 
	But the integral converges to 0 a.s. since the integrand is bounded
	and, with probability one, both $ F^{\bbB_n} $ and $ F^{y_n,H_n} $ converge weakly to the same
	limit having no mass on $ [a^{'}, b^{'}] $. Thus, with probability one, no eigenvalues of
	$ \bbB_n $ will appear in $ [a,b] $ for all $ n $ sufficiently large. This completes the proof of
	Theorem \ref{th7221}.
	
	\section{Mathematical tools}
	
	% \begin{lemma} (Theorem B.12 of Bai and Silverstein (2009) $ \cite{bai2010spectral} $)
	% 	Let $ F $ be a distribution function and let $ G $ be a function of bounded variation satisfying $ \int  \left|  F(x)-G(x) \right| \mathrm{d} x < \infty $.  	Denote their Stieltjes transforms by $ f(z) $ and $ g(z) $, respectively. 	Then, we have 
	% 	\begin{align}\label{B23}
	%	&\lVert F - G \rVert := \sup_x  \lvert F(x) - G(x) \rvert \\ \nonumber
	% 		\leq & \dfrac{1}{\pi(2\gamma - 1)} \int_{-\infty}^{\infty} \left[  \lvert f(z) - g(z)\rvert \mathrm{d} u  +\dfrac{1}{v} \sup_x \int_{\lvert y \rvert \leq 2va} 
	% 		\lvert G(x + y) - G(x) \rvert \mathrm{d} y  \right], 
	% 	\end{align}
	% where $ z = u + iv $, $ v > 0 $, and $ a $ and $ \gamma $ are constants related to each other by
	% \begin{align} \label{B24}
	% 	\gamma = \dfrac{1}{\pi} \int_{ \lvert u \rvert < a}
	% 	\dfrac{1 }{u^2 + 1} \mathrm{d} u > \dfrac{1}{2}. 
	% \end{align}
	% \end{lemma}
	
	\begin{lemma} (Theorem B.14 of \cite{bai2010spectral})\label{lemma3.1}
		Let $ F $ be a distribution function and let $ G $ be a function of bounded variation satisfying $ \int  \left|  F(x)-G(x) \right| \mathrm{d} x < \infty $. 
		Denote their Stieltjes transforms by $ f(z) $ and $ g(z) $, respectively. 
		Then, we have 
		\begin{align}\label{B29}
		\lVert F - G \rVert \leq  & \dfrac{1}{\pi(1-\kappa)(2\gamma - 1)}  \left[ \int_{-A}^{A}   \lvert f(z) - g(z)\rvert \mathrm{d} u  + 2 \pi v^{-1}  \int_{\lvert x \rvert >B} \lvert F(x)-G(x) \rvert \mathrm{d}x 
		\right.  \\ \nonumber    & +  \left.
		v^{-1} \sup_x  \int_{\lvert y \rvert \leq 2va} \lvert G(x + y) - G(x) \rvert \mathrm{d} y   \right],
		\end{align}
		where $ A $ and $ B $ are positive constants such that $ A > B $ and
		\begin{align}\label{B210}
		\kappa= \dfrac{4B}{\pi(A-B)(2\gamma-1)}  < 1 . 
		\end{align} 
	\end{lemma}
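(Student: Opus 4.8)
The statement is an Esseen-type smoothing inequality in which the Cauchy (Poisson) kernel plays the role that the Fourier kernel plays in the classical Berry--Esseen bound, and I would organize the argument in three stages: smoothing, de-smoothing, and a self-improving tail estimate. For the smoothing stage, observe that $\frac1\pi\Im f(x+iv)=\int\frac{v}{\pi((x-u)^2+v^2)}\,\mathrm dF(u)$ is exactly the density of the Cauchy-smoothed distribution $\widetilde F:=F*p_v$, where $p_v(t)=\frac{v}{\pi(t^2+v^2)}$ is the Cauchy density at scale $v$; the same holds for $G$ and $\widetilde G$. Since the hypothesis $\int|F-G|\,\mathrm dx<\infty$ forces $F-G\to0$ at $\pm\infty$ and $\widetilde F-\widetilde G$ to be integrable, one has $\widetilde F(x)-\widetilde G(x)=\frac1\pi\int_{-\infty}^{x}\Im\bigl(f(u+iv)-g(u+iv)\bigr)\,\mathrm du$, and hence $\|\widetilde F-\widetilde G\|\le\frac1\pi\int_{\mathbb R}\bigl|f(u+iv)-g(u+iv)\bigr|\,\mathrm du$.

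The de-smoothing step is the heart of the proof. After the reflection $F(x),G(x)\mapsto1-F(-x),1-G(-x)$, which preserves $\Delta:=\|F-G\|$ and every term on the right of $\eqref{B29}$, we may assume $\Delta=\Delta^+:=\sup_x\bigl(F(x)-G(x)\bigr)$; choose $x_0$ with $F(x_0)-G(x_0)\ge\Delta-\varepsilon$. Evaluate at $y_0=x_0+av$ and write $\widetilde F(y_0)-\widetilde G(y_0)=\int(F-G)(y_0-vs)\,p_1(s)\,\mathrm ds$ with $p_1(s)=\frac1{\pi(1+s^2)}$. On $|s|\le a$ one has $y_0-vs\ge x_0$, so $F(y_0-vs)\ge F(x_0)$ by monotonicity, and the kernel carries mass $\gamma:=\frac1\pi\int_{|s|\le a}\frac{\mathrm ds}{1+s^2}$ (so the appearance of $2\gamma-1$ in $\eqref{B29}$ silently presupposes $\gamma>\tfrac12$, i.e.\ $a>1$); on $|s|>a$ one can only use $F-G\ge-\Delta^-\ge-\Delta$ against the complementary mass $1-\gamma$. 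Combining these with $p_1\le\frac1\pi$ and the substitution $w=y_0-vs$ (which turns the deviation of $G$ from $G(x_0)$ over the window $[x_0,x_0+2va]\subset[x_0-2va,x_0+2va]$ into the stated modulus term) gives $\widetilde F(y_0)-\widetilde G(y_0)\ge(2\gamma-1)\Delta-v^{-1}\sup_x\int_{|y|\le2va}|G(x+y)-G(x)|\,\mathrm dy-\gamma\varepsilon$. Letting $\varepsilon\downarrow0$ and using $\widetilde F(y_0)-\widetilde G(y_0)\le\|\widetilde F-\widetilde G\|$ yields $(2\gamma-1)\Delta\le\|\widetilde F-\widetilde G\|+v^{-1}\sup_x\int_{|y|\le2va}|G(x+y)-G(x)|\,\mathrm dy$.

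For the tail estimate I would bound $\|\widetilde F-\widetilde G\|\le\frac1\pi\int_{\mathbb R}|\Im(f-g)(u+iv)|\,\mathrm du$ by splitting at $|u|=A$. The part $|u|\le A$ is at most $\frac1\pi\int_{-A}^{A}|f-g|\,\mathrm du$. For $|u|>A$ I would return to physical space via $\Im(f-g)(u+iv)=\int\frac{v}{(x-u)^2+v^2}\,\mathrm d(F-G)(x)$, integrate by parts in $x$ (boundary terms vanish because $F-G\to0$), and split the $x$-integral at $|x|=B<A$: on $|x|\le B$ use $|F-G|\le\Delta$ together with $|x-u|\ge A-B$, which after integrating over $|u|>A$ produces a term $c\,\Delta$; on $|x|>B$ one retains $\int_{|x|>B}|F-G|\,\mathrm dx$ with a $v^{-1}$ weight. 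Substituting this into the de-smoothing inequality and dividing by $2\gamma-1$, the $\Delta$-proportional term becomes $\kappa\Delta$ with $\kappa=\frac{4B}{\pi(A-B)(2\gamma-1)}$ (i.e.\ $c=(2\gamma-1)\kappa=\frac{4B}{\pi(A-B)}$); moving it to the left is legitimate precisely because $\kappa<1$, leaving $(1-\kappa)\Delta$ on the left and, on the right, the three advertised integrals carrying a common factor $\frac1\pi$ that combines with the $\frac1{2\gamma-1}$ to give the prefactor $\frac1{\pi(1-\kappa)(2\gamma-1)}$ of $\eqref{B29}$.

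The step I expect to be the real work is this last one: tracking all constants---in particular verifying that the coefficient $c$ of $\Delta$ coming from the $|x|\le B$ portion is exactly $\frac{4B}{\pi(A-B)}$, that the $|x|>B$ portion and the modulus term carry exactly the weights $2\pi v^{-1}$ and $v^{-1}$ shown, and that all three share the common $\frac1\pi$---so that the hypothesis $\kappa<1$ is precisely the threshold at which the self-improving inequality can be solved. Everything else---identifying $\frac1\pi\Im f(\cdot+iv)$ as a smoothed density, the monotonicity argument extracting the factor $2\gamma-1$, and bounding the oscillation of $G$ by the stated modulus term---is routine; one must, however, be careful with the one-sided limits of $F$ and $G$ (absorbed into $\varepsilon$) and must genuinely use $\int|F-G|\,\mathrm dx<\infty$ to justify both the integral representation of $\widetilde F-\widetilde G$ and the finiteness of the tail term.
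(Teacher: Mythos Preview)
The paper does not prove this lemma at all: it is stated in Section~6 (``Mathematical tools'') as a direct citation of Theorem~B.14 from Bai and Silverstein's monograph, with no argument given. Your three-stage outline---Cauchy smoothing, the monotonicity-based de-smoothing that extracts the factor $2\gamma-1$, and the tail split at $|u|=A$ with integration by parts and a further split at $|x|=B$ producing the self-improving $\kappa\Delta$ term---is exactly the standard proof that appears in the cited reference, and your identification of the constant-tracking in the third stage as the only genuinely delicate part is accurate. So there is nothing to compare against in the present paper; your sketch is correct and coincides with the proof in the source being quoted.
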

	
	\begin{lemma}( Lemma 3.3 of \cite{dozier2007empirical}) \label{lemma3.2}
		Let $ z \in \mathbb{C}^+  $ with $ v = \Im z $, $ \bbA $ and $ \bbB $ $ n \times n  $ with $ \bbB $ Hermitian, and $ \bbr \in \mathbb{C}^n $. Then $$ \lvert  \mathrm{tr}((\bbB- z\bbI)^{-1} - (\bbB+ \bbr\bbr^*-z\bbI)^{-1})\bbA \rvert =\left|   \frac{\bbr^*(\bbB - z\bbI)^{-1}\bbA(\bbB - z\bbI)^{-1}\bbr}{1 + \bbr^*(\bbB - z\bbI)^{-1 }\bbr }  \right| \leq \dfrac{\lVert \bbA \rVert }{v}.$$
	\end{lemma}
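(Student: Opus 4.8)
The plan is to derive the stated identity from the Sherman--Morrison rank-one update formula and then to estimate the resulting scalar, exploiting that $\bbB$ is Hermitian and $v=\Im z>0$. First I would record the resolvent identity: since $\bbB-z\bbI$ and $\bbB+\bbr\bbr^*-z\bbI$ are invertible for $z\in\mathbb{C}^+$, Sherman--Morrison gives
\[
(\bbB+\bbr\bbr^*-z\bbI)^{-1}=(\bbB-z\bbI)^{-1}-\frac{(\bbB-z\bbI)^{-1}\bbr\bbr^*(\bbB-z\bbI)^{-1}}{1+\bbr^*(\bbB-z\bbI)^{-1}\bbr},
\]
the denominator being nonzero by the computation carried out below. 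Subtracting, multiplying on the right by $\bbA$, taking the trace, and using its cyclic invariance turns $\mathrm{tr}\big[(\bbB-z\bbI)^{-1}\bbr\bbr^*(\bbB-z\bbI)^{-1}\bbA\big]$ into $\bbr^*(\bbB-z\bbI)^{-1}\bbA(\bbB-z\bbI)^{-1}\bbr$, which is the claimed equality; it is trivially zero when $\bbr=\mathbf{0}$, so henceforth we may assume $\bbr\neq\mathbf{0}$.

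For the inequality I would bound numerator and denominator separately. For the numerator, writing $\bbr^*(\bbB-z\bbI)^{-1}=\big((\bbB-\bar z\bbI)^{-1}\bbr\big)^*$ (using $\bbB$ Hermitian) and applying Cauchy--Schwarz together with the bound $\lVert\bbA\bbx\rVert\le\lVert\bbA\rVert\,\lVert\bbx\rVert$,
\[
\big|\bbr^*(\bbB-z\bbI)^{-1}\bbA(\bbB-z\bbI)^{-1}\bbr\big|\le\lVert\bbA\rVert\,\big\|(\bbB-\bar z\bbI)^{-1}\bbr\big\|\,\big\|(\bbB-z\bbI)^{-1}\bbr\big\|.
\]
Since $(\bbB-z\bbI)^{-1}$ and $(\bbB-\bar z\bbI)^{-1}$ are commuting functions of the Hermitian matrix $\bbB$, the two norms on the right are equal, so the left side is at most $\lVert\bbA\rVert\,\bbr^*(\bbB-\bar z\bbI)^{-1}(\bbB-z\bbI)^{-1}\bbr$. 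Using the spectral decomposition $\bbB=\sum_i\lambda_i\bbe_i\bbe_i^*$, this last quantity equals $\sum_i|\bbe_i^*\bbr|^2|\lambda_i-z|^{-2}$, while $\Im\big(\bbr^*(\bbB-z\bbI)^{-1}\bbr\big)=v\sum_i|\bbe_i^*\bbr|^2|\lambda_i-z|^{-2}$; hence the numerator is bounded by $v^{-1}\lVert\bbA\rVert\,\Im\big(1+\bbr^*(\bbB-z\bbI)^{-1}\bbr\big)$. For the denominator, $\big|1+\bbr^*(\bbB-z\bbI)^{-1}\bbr\big|\ge\Im\big(1+\bbr^*(\bbB-z\bbI)^{-1}\bbr\big)>0$, the strict positivity coming from $v>0$ and $\bbr\neq\mathbf{0}$; this also retroactively shows the Sherman--Morrison denominator does not vanish. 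Dividing the two estimates yields the bound $\lVert\bbA\rVert/v$.

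I do not anticipate a genuine obstacle: this is the standard rank-one resolvent perturbation estimate, and the only points needing a little care are the identity $\|(\bbB-\bar z\bbI)^{-1}\bbr\|=\|(\bbB-z\bbI)^{-1}\bbr\|$ (from $\bbB$ being Hermitian) and the order of operations, namely performing the imaginary-part computation first so that the nonvanishing of the Sherman--Morrison denominator is established before it is invoked.
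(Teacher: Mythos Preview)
Your argument is correct and is the standard proof of this rank-one perturbation estimate. The paper itself does not supply a proof of this lemma: it is listed in the ``Mathematical tools'' section and attributed to Lemma~3.3 of \cite{dozier2007empirical}, so there is no in-paper argument to compare against; your write-up simply fills in what the authors take as known.
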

	
	\begin{lemma}(Lemma 2.12 of  \cite{bai2010spectral})
		Let $ \{X_k\} $ be a complex martingale difference sequence with respect to the increasing 
		$ \sigma-field $ $ {\mathcal{F}_k} $. Then, for $ p > 1 $, 
		$$   \bbE \left|   X_k\right|^p \leq K_p \bbE \left(  \sum |X_k|^2 \right)^{p/2}. $$
	\end{lemma}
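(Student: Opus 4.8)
The plan is to prove this Burkholder moment inequality (the left-hand side should of course read $\bbE\big|\sum_k X_k\big|^p$, the $p$-th moment of the partial sum), and I would split the argument at $p=2$. Throughout set $S_n=\sum_{k\le n}X_k$, $T_n=\big(\sum_{k\le n}|X_k|^2\big)^{1/2}$ and $S_n^\ast=\max_{k\le n}|S_k|$. The case $p=2$ is immediate by orthogonality: since $\bbE[X_k\overline{X}_j\mid\mathcal F_{k-1}]=0$ for $j<k$, all cross terms vanish and $\bbE|S_n|^2=\bbE T_n^2$, so $K_2=1$.

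For $p>2$ --- which is the only range this paper ever uses, since every application has $p=2\ell$ an even integer --- I would run the standard convexity/Doob/Young loop. First record the elementary scalar inequality $|a+b|^p\le|a|^p+p|a|^{p-2}\Re(\overline a b)+C_p(|a|^{p-2}|b|^2+|b|^p)$, valid for $a,b\in\mathbb C$ and $p\ge2$ (a second-order Taylor estimate of $t\mapsto|a+tb|^p$). Apply it with $a=S_{k-1}$, $b=X_k$ and take $\bbE[\,\cdot\mid\mathcal F_{k-1}]$: the linear term drops because $\bbE[X_k\mid\mathcal F_{k-1}]=0$. Summing over $k$, bounding $|S_{k-1}|\le S_n^\ast$, and using $\sum_k|X_k|^p\le T_n^p$ (which needs $p\ge2$) gives $\bbE|S_n|^p\le C_p\,\bbE[(S_n^\ast)^{p-2}T_n^2]+C_p\,\bbE T_n^p$. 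Hölder with exponents $p/(p-2)$ and $p/2$ on the first term, then Doob's $L^p$ maximal inequality $\bbE(S_n^\ast)^p\le(p/(p-1))^p\,\bbE|S_n|^p$, turns this into $A\le C_pA^{(p-2)/p}B^{2/p}+C_pB$ with $A=\bbE|S_n|^p$, $B=\bbE T_n^p$; Young's inequality absorbs $\tfrac12A$ to the left and leaves $A\le K_pB$. To divide legitimately I would first truncate --- stop the martingale at the first time $S_n^\ast$ or $T_n$ exceeds a fixed level, so $A<\infty$ a priori --- and then send the level to infinity by monotone convergence.

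For completeness, the remaining range $1<p<2$ I would handle by the martingale good-$\lambda$ inequality in the form $\bbP(S_n^\ast>\beta\lambda,\,T_n\le\delta\lambda)\le C\delta^2(\beta-1)^{-2}\bbP(S_n^\ast>\lambda)$ for $\beta>1$ and $\delta$ small, proved with the stopping times $\tau=\inf\{k:|S_k|>\lambda\}$ and $\sigma=\inf\{k:T_{k+1}>\delta\lambda\}$ together with the $p=2$ bound applied to the stopped increments; multiplying by $p\lambda^{p-1}$ and integrating yields $\|S_n^\ast\|_p\le C_p\|T_n\|_p$ for every $p>0$, and $|S_n|\le S_n^\ast$ finishes it. (Equivalently one may use the Davis decomposition to reduce $1<p<2$ to the $p=2$ estimate plus an $L^1$ bound.)

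The step I expect to be the genuine obstacle is exactly this last range $1<p<2$: there is no Taylor estimate of the correct one-sided form, and $\sum_k|X_k|^p$ is no longer controlled by $T_n^p$, so the clean convexity loop collapses and one is forced into the good-$\lambda$ (or Davis) machinery, whose proof requires a careful stopping-time decomposition. Everything in the $p\ge2$ case is routine bookkeeping, and since the paper only ever invokes even integer $p$, that complication can in practice be avoided altogether.
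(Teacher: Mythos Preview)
The paper does not prove this lemma at all: it is listed in Section~6 (``Mathematical tools'') purely as a citation of Lemma~2.12 in Bai--Silverstein's book, with no argument given. So there is no ``paper's own proof'' to compare against; your outline is a self-contained proof of a result the authors simply quote.

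That said, your proposal is a correct and standard route to Burkholder's inequality. You rightly note that the displayed left-hand side is a typo for $\bbE\bigl|\sum_k X_k\bigr|^p$. The $p=2$ case by orthogonality is immediate; the $p>2$ argument via the second-order Taylor bound, H\"older, Doob's maximal inequality, and the absorption step is exactly the classical proof (essentially Burkholder's original convexity argument). Your remark that the paper only ever uses even integer exponents $p=2\ell$ is accurate, so the $1<p<2$ range is indeed irrelevant here; your sketch via good-$\lambda$ or Davis decomposition for that range is also the standard one. There is no gap in the proposal.
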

	
	\begin{lemma}(Lemma 2.13 of \cite{bai2010spectral}) \label{lemma2.13}
		Let $ \{X_k\} $ be a complex martingale difference sequence with respect to the increasing 
		$ \sigma-field $ $ {\mathcal{F}_k} $, and let $ \bbE_k $ denote conditional expectation
		$ w.r.t. \mathcal{F}_k. $ 
		Then, for $ p \geq 2 $, 
		$$\bbE \left|   X_k\right|^p \leq K_p \left(  \bbE \left(  \sum \bbE_{k-1} |X_k|^2  \right) ^{p/2} + \bbE \sum |X_k|^p  \right) $$
	\end{lemma}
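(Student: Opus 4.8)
The plan is to derive the inequality from the square-function form of Burkholder's inequality stated immediately above it (Lemma 2.12 of \cite{bai2010spectral}), applied twice, followed by an absorption argument. Throughout I write $S_n=\sum_{k=1}^{n}X_k$ --- the left side of the asserted inequality should of course read $\bbE|S_n|^p$, the summation being dropped by an evident misprint (the same misprint occurs in Lemma 2.12) --- and I set $Q_n=\sum_{k=1}^{n}|X_k|^2$ for the quadratic variation and $L_n=\sum_{k=1}^{n}\bbE_{k-1}|X_k|^2$ for its predictable compensator. If the right side of the asserted inequality is infinite there is nothing to prove, so I may assume $\bbE L_n^{p/2}<\infty$ and $\bbE\sum_{k}|X_k|^p<\infty$; the latter forces $\bbE Q_n^{p/2}\le n^{p/2-1}\bbE\sum_{k}|X_k|^p<\infty$ by the power-mean inequality, and this finiteness is what makes the absorption step legitimate. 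The case $p=2$ is trivial, since by orthogonality of martingale differences $\bbE|S_n|^2=\sum_{k}\bbE|X_k|^2=\bbE L_n$, so I assume $p>2$.

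First I would apply Lemma 2.12 of \cite{bai2010spectral} to $\{X_k\}$ with the exponent $p>1$ to get $\bbE|S_n|^p\le K_p\,\bbE Q_n^{p/2}$, reducing everything to a bound on $\bbE Q_n^{p/2}$. Then I decompose $Q_n=L_n+N_n$ with $N_n=\sum_{k=1}^{n}Y_k$, $Y_k=|X_k|^2-\bbE_{k-1}|X_k|^2$; since the $X_k$ are square-integrable, $\{Y_k\}$ is a (real-valued, hence complex) martingale difference sequence for $\{\mathcal{F}_k\}$. The elementary inequality $(a+b)^r\le 2^{r-1}(a^r+b^r)$ for $a,b\ge0,\ r\ge1$ gives $\bbE Q_n^{p/2}\le 2^{p/2-1}(\bbE L_n^{p/2}+\bbE|N_n|^{p/2})$, the first term already being of the desired shape, so the matter reduces to estimating $\bbE|N_n|^{p/2}$.

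For that I would invoke Lemma 2.12 of \cite{bai2010spectral} a second time, now for $\{Y_k\}$ with the exponent $p/2>1$, to obtain $\bbE|N_n|^{p/2}\le K_{p/2}\,\bbE(\sum_{k=1}^{n}Y_k^2)^{p/4}$, and then bound $\sum_k Y_k^2$ pointwise: from $Y_k^2\le 2|X_k|^4+2(\bbE_{k-1}|X_k|^2)^2$, from $\sum_k|X_k|^4\le(\max_k|X_k|^2)Q_n$, and from $\sum_k(\bbE_{k-1}|X_k|^2)^2\le L_n^2$ (the summands being nonnegative) one gets $\sum_k Y_k^2\le 2(\max_k|X_k|^2)Q_n+2L_n^2$. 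Raising to the power $p/4$, using the $c_r$-inequality once more, splitting the mixed term by Young's inequality $(\max_k|X_k|)^{p/2}Q_n^{p/4}\le\delta(\max_k|X_k|)^p+\tfrac{1}{4\delta}Q_n^{p/2}$, and bounding $(\max_k|X_k|)^p\le\sum_k|X_k|^p$, I arrive at an estimate of the form $\bbE|N_n|^{p/2}\le C_p(\delta\,\bbE\sum_k|X_k|^p+\tfrac{1}{4\delta}\bbE Q_n^{p/2}+\bbE L_n^{p/2})$ with $C_p$ depending only on $p$ (through the two constants from Lemma 2.12 and the numerical factors).

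Combining these bounds yields $\bbE Q_n^{p/2}\le \tfrac{C_p'}{\delta}\bbE Q_n^{p/2}+C_p''(\delta\,\bbE\sum_k|X_k|^p+\bbE L_n^{p/2})$; choosing $\delta$ a sufficiently large function of $p$ so that $C_p'/\delta\le\tfrac12$, and using the a priori bound $\bbE Q_n^{p/2}<\infty$ to move that term to the left, I conclude $\bbE Q_n^{p/2}\le 2C_p''(\delta\,\bbE\sum_k|X_k|^p+\bbE L_n^{p/2})$; feeding this into $\bbE|S_n|^p\le K_p\bbE Q_n^{p/2}$ and renaming the constant gives the lemma. I expect the only delicate point to be the bookkeeping around the absorption: one must ensure $\bbE Q_n^{p/2}$ was shown finite (via the reduction in the first paragraph) before subtracting it, and that the Young parameter $\delta$ is frozen purely as a function of $p$ after all constants coming from the two appeals to Lemma 2.12 and the $c_r$-inequalities are made explicit. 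The pointwise estimates and the two applications of Lemma 2.12 are otherwise entirely routine.
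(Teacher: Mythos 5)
Your proof is correct. Note first that the paper itself offers no proof of this statement: it is quoted verbatim as Lemma 2.13 of Bai and Silverstein's book, so there is no internal argument to compare against. What you give is the standard bootstrap of the Rosenthal-type (conditional square function plus $p$-th moments) form from the unconditional square-function form of Burkholder's inequality, and every step checks out: the a priori finiteness $\bbE Q_n^{p/2}\le n^{p/2-1}\bbE\sum_k|X_k|^p<\infty$ that legitimizes the absorption, the observation that $Y_k=|X_k|^2-\bbE_{k-1}|X_k|^2$ is again a martingale difference sequence to which Lemma 2.12 applies with exponent $p/2>1$, the pointwise bound $\sum_kY_k^2\le 2(\max_k|X_k|^2)Q_n+2L_n^2$, and the Young splitting with $\delta$ frozen as a function of $p$ alone. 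The separate treatment of $p=2$ by orthogonality is also needed (the second appeal to Lemma 2.12 requires $p/2>1$) and you handle it. This is essentially the classical argument of Burkholder (1973) as reproduced in Hall and Heyde; nothing is missing.
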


	\begin{lemma}(Lemma B.26 of \cite{bai2010spectral}) \label{lemmab26}
		Let $ \bbA=(a_{ij}) $ be an $ n \times n $ nonrandom matrix and $ \bbX=(x_1, \dots, x_n)^{'} $
		be a random vector of independent entries. 
		Assume that $ \bbE x_i = 0, \bbE|x_i|^2 = 1,  $ and $ \bbE|x_j |^{\ell} \leq \nu_{\ell} $. 
		Then, for any $ p \geq 1 $, 
		$$ \bbE \vert \bbX^* \bbA \bbX - \mathrm{tr} \bbA \vert^p \leq C_p \left( (\nu_4 \mathrm{tr}(\bbA\bbA^*))^{p/2} + \nu_{2p} \mathrm{tr}(\bbA \bbA^*)^{p/2} \right),  $$
		where $ C_p $ is a constant depending on $ p $ only.
	\end{lemma}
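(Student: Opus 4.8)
The statement to prove is the moment bound for the centred quadratic form $\bbX^*\bbA\bbX - \mathrm{tr}\,\bbA$ (Lemma \ref{lemmab26}).

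The plan is to establish the bound by induction on $p$, after reducing to a Hermitian $\bbA$ and separating the diagonal and off‑diagonal contributions of the form. Writing $\bbA = \bbH_1 + i\bbH_2$ with $\bbH_1 = (\bbA + \bbA^*)/2$ and $\bbH_2 = (\bbA - \bbA^*)/(2i)$ Hermitian, one has $\bbX^*\bbA\bbX - \mathrm{tr}\,\bbA = (\bbX^*\bbH_1\bbX - \mathrm{tr}\,\bbH_1) + i(\bbX^*\bbH_2\bbX - \mathrm{tr}\,\bbH_2)$, and since subadditivity of the Schatten norms gives $\mathrm{tr}((\bbH_j\bbH_j^*)^q) \le \mathrm{tr}((\bbA\bbA^*)^q)$ for all the exponents $q$ that occur, it suffices to treat Hermitian $\bbA$ at the cost of a factor $2^{p-1}$. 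For Hermitian $\bbA$ split
$$\bbX^*\bbA\bbX - \mathrm{tr}\,\bbA = \sum_{i} a_{ii}(\lvert x_i\rvert^2 - 1) + \sum_{i\neq j} a_{ij}\bar x_i x_j =: Y_1 + Y_2 ,$$
so that $\bbE\lvert \bbX^*\bbA\bbX - \mathrm{tr}\,\bbA\rvert^p \le 2^{p-1}(\bbE\lvert Y_1\rvert^p + \bbE\lvert Y_2\rvert^p)$. The base case $1 \le p \le 2$ follows from the direct second‑moment estimates $\bbE\lvert Y_1\rvert^2 \le \nu_4\,\mathrm{tr}(\bbA\bbA^*)$ and $\bbE\lvert Y_2\rvert^2 \le 2\,\mathrm{tr}(\bbA\bbA^*)$ (the cross term vanishing because $\bbE x_i = 0$), together with Lyapunov's inequality and $\nu_4 \ge 1$; the inductive step at $p > 2$ will invoke the claim at the exponent $p/2$, which on iteration returns to $[1,2]$.

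For the diagonal part $Y_1$, a sum of independent centred variables, I would apply Rosenthal's inequality (a special case of Burkholder's inequality, Lemma \ref{lemma2.13}):
$$\bbE\lvert Y_1\rvert^p \le C_p\Big(\big(\sum_i \lvert a_{ii}\rvert^2\,\bbE\lvert\,\lvert x_i\rvert^2 - 1\,\rvert^2\big)^{p/2} + \sum_i \lvert a_{ii}\rvert^p\,\bbE\lvert\,\lvert x_i\rvert^2 - 1\,\rvert^p\Big) ,$$
and estimate $\bbE\lvert\,\lvert x_i\rvert^2 - 1\,\rvert^2 = \bbE\lvert x_i\rvert^4 - 1 \le \nu_4$ and $\bbE\lvert\,\lvert x_i\rvert^2 - 1\,\rvert^p \le 2^p \nu_{2p}$, while $\sum_i \lvert a_{ii}\rvert^2 \le \mathrm{tr}(\bbA\bbA^*)$ (from $\lvert a_{ii}\rvert^2 \le (\bbA\bbA^*)_{ii}$) and $\sum_i \lvert a_{ii}\rvert^p \le \sum_i (\bbA\bbA^*)_{ii}^{p/2} \le \mathrm{tr}((\bbA\bbA^*)^{p/2})$, the last step being the elementary fact that $\sum_i \bbM_{ii}^q \le \mathrm{tr}(\bbM^q)$ for positive semidefinite $\bbM$ and $q \ge 1$ (Jensen over the spectral decomposition). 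This gives $\bbE\lvert Y_1\rvert^p \le C_p((\nu_4\,\mathrm{tr}(\bbA\bbA^*))^{p/2} + \nu_{2p}\,\mathrm{tr}((\bbA\bbA^*)^{p/2}))$, already of the desired shape.

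For the off‑diagonal part $Y_2$, I would use the martingale‑difference decomposition $Y_2 = \sum_k d_k$ relative to the filtration generated by $x_1,\dots,x_k$, where $d_k = x_k u_k + \bar x_k w_k$ with $u_k = \sum_{i<k} a_{ik}\bar x_i$ and $w_k = \sum_{i<k} a_{ki} x_i$; for Hermitian $\bbA$ one has $w_k = \bar u_k$, so $\bbE_{k-1}\lvert d_k\rvert^2 \le 4\lvert u_k\rvert^2$. Burkholder's inequality (Lemma \ref{lemma2.13}) then yields $\bbE\lvert Y_2\rvert^p \le K_p(\bbE(\sum_k \bbE_{k-1}\lvert d_k\rvert^2)^{p/2} + \bbE\sum_k\lvert d_k\rvert^p)$. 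In the first term, $\sum_k \bbE_{k-1}\lvert d_k\rvert^2 \le 4\sum_k \lvert u_k\rvert^2 = 4\,\bbX^*(\bbA_u\bbA_u^*)\bbX$ where $\bbA_u$ is the strictly upper‑triangular part of $\bbA$; writing $\bbB = \bbA_u\bbA_u^*$ and $(\bbX^*\bbB\bbX)^{p/2} \le 2^{p/2-1}(\lvert \bbX^*\bbB\bbX - \mathrm{tr}\,\bbB\rvert^{p/2} + (\mathrm{tr}\,\bbB)^{p/2})$, I would invoke the induction hypothesis at exponent $p/2$ for the positive semidefinite matrix $\bbB$, using $\mathrm{tr}\,\bbB \le \mathrm{tr}(\bbA\bbA^*)$ and $\mathrm{tr}(\bbB^q) = \mathrm{tr}((\bbA_u\bbA_u^*)^q) \le C_q\,\mathrm{tr}((\bbA\bbA^*)^q)$. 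In the second term, $\bbE\lvert d_k\rvert^p \le C_p\nu_p\,\bbE\lvert u_k\rvert^p$ plus the $w_k$ analogue, and $\bbE\lvert u_k\rvert^p \le C_p((\sum_{i<k}\lvert a_{ik}\rvert^2)^{p/2} + \nu_p\sum_{i<k}\lvert a_{ik}\rvert^p)$ by Rosenthal again; summing over $k$ and using $\sum_k(\sum_{i<k}\lvert a_{ik}\rvert^2)^{p/2} = \sum_k((\bbA_u^*\bbA_u)_{kk})^{p/2} \le \mathrm{tr}((\bbA_u^*\bbA_u)^{p/2}) \le C_p\,\mathrm{tr}((\bbA\bbA^*)^{p/2})$ and $\sum_{i,k}\lvert a_{ik}\rvert^p \le \mathrm{tr}((\bbA\bbA^*)^{p/2})$ (entrywise $\ell^p$ dominated by the Schatten $p$‑norm for $p\ge2$), together with $\nu_p \le \nu_{2p}$ and $\nu_p^2 \le \nu_{2p}$, one recovers the claimed form for $\bbE\lvert Y_2\rvert^p$. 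Collecting the three contributions closes the induction.

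The main obstacle, and where the argument must be handled with care, is the inductive step on the conditional‑variance term: it forces one to pass from the truncated matrix $\bbA_u\bbA_u^*$ back to $\bbA\bbA^*$ with constants independent of the dimension, i.e. to use that triangular truncation is bounded on the Schatten class $S_{2q}$ for $2q > 1$ (for $q=1$ this is merely $\lVert \bbA_u\rVert_F \le \lVert\bbA\rVert_F$), and then to track how the moment factors $\nu_4,\nu_p,\nu_{2p}$ produced at the halved exponent consolidate back into $\nu_4$ and $\nu_{2p}$ via $\nu_4\ge1$, $\nu_{2p}\ge1$ and Lyapunov's inequality. An alternative that bypasses the explicit triangular‑truncation estimate is a Hanson–Wright‑type decoupling: replace $Y_2$, up to a constant, by $\bbX^*\bbA\tilde\bbX$ with $\tilde\bbX$ an independent copy of $\bbX$, which conditionally on $\tilde\bbX$ is a sum of independent variables handled by Rosenthal's inequality, and the quadratic form that then appears is $\tilde\bbX^*(\bbA^*\bbA)\tilde\bbX$ with the full matrix $\bbA^*\bbA$, so that only $\mathrm{tr}((\bbA^*\bbA)^2)\le(\mathrm{tr}\,\bbA\bbA^*)^2$, $\mathrm{tr}((\bbA^*\bbA)^{p/2}) = \mathrm{tr}((\bbA\bbA^*)^{p/2})$ and the entrywise‑$\ell^p$ bound are needed; the amount of bookkeeping is comparable.
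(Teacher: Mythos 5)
The paper does not prove this lemma at all --- it is quoted verbatim from Bai and Silverstein (2010, Lemma B.26) as an external tool in Section 6 --- so there is no internal proof to compare against; your argument is the standard one for this bound (diagonal/off-diagonal split, martingale-difference decomposition, Burkholder--Rosenthal, induction on dyadic exponents) and the estimates check out, including the base case, the bounds $\sum_i \lvert a_{ii}\rvert^p \le \mathrm{tr}((\bbA\bbA^*)^{p/2})$ and $\sum_{i,k}\lvert a_{ik}\rvert^p \le \mathrm{tr}((\bbA\bbA^*)^{p/2})$, and the consolidation of moment factors via $\nu_4\ge 1$, $\nu_p\le\nu_{2p}$, $\nu_p^2\le\nu_{2p}$. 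The one genuinely non-elementary input is the boundedness of triangular truncation on the Schatten classes $S_q$ for $1<q<\infty$ (Macaev/Gohberg--Krein), which you correctly single out as the crux of the conditional-variance term (the crude bound $\mathrm{tr}(\bbB^{p/2})\le(\mathrm{tr}\,\bbB)^{p/2}$ really would lose the claimed form, as one sees already for $\bbA=\bbI_n$); invoking that theorem is legitimate since its constant depends only on the exponent, and the decoupling variant you sketch is indeed the standard way to avoid it.
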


	\begin{lemma}(Interlacing theorem) \label{p528inth}
		If $ \bbC $ is an $ (n - 1) \times (n - 1) $ major sub-matrix of the $ n \times n $ Hermitian matrix $ \bbA $, then $ \lambda_1(\bbA) \geq \lambda_1(\bbC) \geq \lambda_2(\bbA) \geq \cdots \geq \lambda_{n-1}(\bbC) \geq \lambda_n(\bbA) $, where $ \lambda_i(\bbA) $ denotes the $ i $-th largest eigenvalues of the Hermitian matrix $ \bbA $.
	\end{lemma}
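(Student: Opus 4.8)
The plan is to derive the interlacing inequalities from the Courant--Fischer min--max characterization of the eigenvalues of a Hermitian matrix, which I will simply quote as a standard fact: for an $n\times n$ Hermitian matrix $\bbA$ and $1\le k\le n$,
$$\lambda_k(\bbA)=\max_{\substack{V\subseteq\mathbb C^n\\ \dim V=k}}\ \min_{\substack{x\in V\\ x\ne 0}}\frac{x^*\bbA x}{x^*x}.$$
First I would reduce to the case in which $\bbC$ is the leading principal $(n-1)\times(n-1)$ block of $\bbA$: a major $(n-1)\times(n-1)$ submatrix is obtained by deleting some index $j$ together with its row and column, so there is a permutation matrix $\Pi$ with $\Pi^*\bbA\Pi=\begin{pmatrix}\bbC' & \bbb\\ \bbb^* & d\end{pmatrix}$, where $\bbC'$ differs from $\bbC$ only by a relabelling of indices; since $\Pi$ is unitary, conjugation by it changes neither the spectrum of $\bbA$ nor that of $\bbC$. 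I then identify $\mathbb C^{n-1}$ with the subspace $W_0=\{x\in\mathbb C^n:\ x_n=0\}$. The key elementary observation is that $x^*\bbA x=x^*\bbC x$ for every $x\in W_0$ (reading $x$ as an element of $\mathbb C^{n-1}$ on the right-hand side), while $\dim W_0=n-1$.

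For the inequalities $\lambda_k(\bbA)\ge\lambda_k(\bbC)$ with $1\le k\le n-1$, I would pick a $k$-dimensional subspace $V\subseteq W_0$ attaining the max--min value defining $\lambda_k(\bbC)$, regard $V$ as a $k$-dimensional subspace of $\mathbb C^n$, and conclude
$$\lambda_k(\bbA)\ \ge\ \min_{0\ne x\in V}\frac{x^*\bbA x}{x^*x}\ =\ \min_{0\ne x\in V}\frac{x^*\bbC x}{x^*x}\ =\ \lambda_k(\bbC).$$
For the inequalities $\lambda_k(\bbC)\ge\lambda_{k+1}(\bbA)$ with $1\le k\le n-1$, I would pick a $(k+1)$-dimensional subspace $U\subseteq\mathbb C^n$ attaining $\lambda_{k+1}(\bbA)$; since $\dim U+\dim W_0=(k+1)+(n-1)>n$, the intersection $U\cap W_0$ has dimension at least $k$, so we may choose a $k$-dimensional subspace $V\subseteq U\cap W_0$, and then
$$\lambda_k(\bbC)\ \ge\ \min_{0\ne x\in V}\frac{x^*\bbC x}{x^*x}\ =\ \min_{0\ne x\in V}\frac{x^*\bbA x}{x^*x}\ \ge\ \min_{0\ne x\in U}\frac{x^*\bbA x}{x^*x}\ =\ \lambda_{k+1}(\bbA),$$
the middle equality because $V\subseteq W_0$ and the last inequality because $V\subseteq U$. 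Concatenating the two families of inequalities produces the chain $\lambda_1(\bbA)\ge\lambda_1(\bbC)\ge\lambda_2(\bbA)\ge\cdots\ge\lambda_{n-1}(\bbC)\ge\lambda_n(\bbA)$, which is exactly the assertion.

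The only step that requires genuine care --- and the one I would flag as the main obstacle, even though it is not a deep one --- is the index and dimension bookkeeping in the second family: one must align the index $k+1$ on the $\bbA$ side with $k$ on the $\bbC$ side and verify that $U\cap W_0$ is large enough, which works with exactly one dimension to spare. Beyond that, the argument is purely variational: no spectral decomposition, no perturbation expansion, and no appeal to the structure of the off-diagonal block $\bbb$ is needed once the quadratic forms of $\bbA$ and $\bbC$ have been identified on $W_0$.
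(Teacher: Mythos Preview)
Your proof is correct and is the standard Courant--Fischer argument for Cauchy's interlacing theorem. The paper itself does not prove this lemma: it appears in Section~6 (``Mathematical tools'') as a stated fact without proof, alongside other cited background results. So there is no approach to compare against; you have simply supplied a clean proof where the paper quotes the result.
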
	
	
	\begin{lemma}(Corollary 7.3.8 of \cite{hom1985matrix}) \label{lemma2.5}
		For $ n \times p $ matrices $ \bbA $ and $ \bbB $ with respective singular values $ s_1(\bbA) \geq s_2(\bbA) \geq \cdots \geq s_q(\bbA) $, $ s_1(\bbB) \geq s_2(\bbB) \geq \cdots \geq s_q(\bbB) $, where
		$ q = \min(n, p) $, for all $ k = 1, 2, \cdots, q $, we have
		$$  \lvert s_k(\bbA) - s_k(\bbB) \rvert  \leq \|\bbA - \bbB \|.  $$	
	\end{lemma}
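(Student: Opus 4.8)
The plan is to reduce the statement to Weyl's perturbation inequality for eigenvalues of Hermitian matrices by passing to the Hermitian (Jordan--Wielandt) dilation. For an $n\times p$ matrix $\bbA$, set
$$\widetilde\bbA=\begin{pmatrix}0&\bbA\\ \bbA^*&0\end{pmatrix},$$
an $(n+p)\times(n+p)$ Hermitian matrix. First I would record the standard spectral identification: the eigenvalues of $\widetilde\bbA$ are $\pm s_1(\bbA),\dots,\pm s_q(\bbA)$ together with $|n-p|$ zeros; indeed if $\bbA v=s\,w$ and $\bbA^*w=s\,v$ for unit vectors $v,w$ (a singular triple), then $(w^{*},v^{*})^{*}$ and $(w^{*},-v^{*})^{*}$ are eigenvectors of $\widetilde\bbA$ with eigenvalues $+s$ and $-s$. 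In particular the $k$-th largest eigenvalue of $\widetilde\bbA$ equals $s_k(\bbA)$ for $1\le k\le q$, the extra zeros sitting harmlessly below these nonnegative values. The second ingredient is that the dilation is linear and norm-preserving for the spectral norm: $\widetilde\bbA-\widetilde\bbB=\widetilde{\bbA-\bbB}$, and for any matrix $\bbC$ one has $\widetilde\bbC^{\,2}=\operatorname{diag}(\bbC\bbC^{*},\bbC^{*}\bbC)$, hence $\|\widetilde\bbC\|^2=\|\widetilde\bbC^{\,2}\|=\max(\|\bbC\bbC^{*}\|,\|\bbC^{*}\bbC\|)=\|\bbC\|^2$.

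Next I would invoke Weyl's inequality: for Hermitian matrices $\bbH_1,\bbH_2$ of equal size, $|\lambda_j(\bbH_1)-\lambda_j(\bbH_2)|\le\|\bbH_1-\bbH_2\|$ for every $j$, where $\lambda_j$ is the $j$-th largest eigenvalue. Applying this to $\bbH_1=\widetilde\bbA$, $\bbH_2=\widetilde\bbB$ and reading off the $k$-th largest eigenvalue for $k\le q$ gives
$$|s_k(\bbA)-s_k(\bbB)|=|\lambda_k(\widetilde\bbA)-\lambda_k(\widetilde\bbB)|\le\|\widetilde\bbA-\widetilde\bbB\|=\|\widetilde{\bbA-\bbB}\|=\|\bbA-\bbB\|,$$
which is exactly the assertion. (Weyl's inequality itself is immediate from Courant--Fischer applied to $\bbH_1=\bbH_2+(\bbH_1-\bbH_2)$.)

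An equivalent self-contained route avoids the dilation and uses the min--max formula for singular values directly: for $1\le k\le q$,
$$s_k(\bbA)=\min_{\substack{S\subseteq\mathbb C^{p}\\ \dim S=p-k+1}}\ \max_{\substack{x\in S\\ \|x\|=1}}\|\bbA x\|.$$
Fixing a subspace $S$ that attains the minimum for $\bbB$, one gets $\|\bbA x\|\le\|\bbB x\|+\|(\bbA-\bbB)x\|\le\|\bbB x\|+\|\bbA-\bbB\|$ for every unit $x\in S$; taking the maximum over such $x$ yields $s_k(\bbA)\le s_k(\bbB)+\|\bbA-\bbB\|$, and interchanging $\bbA,\bbB$ gives the reverse bound. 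There is no genuine obstacle in either approach; the only points requiring a little care are the index bookkeeping in the dilation when $n\neq p$ (checking that the $|n-p|$ spurious zero eigenvalues of $\widetilde\bbA$ lie among the smallest eigenvalues, so that the top $q$ eigenvalues are the singular values in decreasing order) and the elementary verification that $\bbC\mapsto\widetilde\bbC$ preserves the spectral norm.
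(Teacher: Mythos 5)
Your proof is correct. The paper offers no proof of this lemma at all---it simply cites Corollary 7.3.8 of Horn and Johnson---and your argument (Hermitian dilation of $\bbA$ and $\bbB$ plus Weyl's eigenvalue perturbation inequality, or equivalently the Courant--Fischer min--max characterization of singular values applied directly) is precisely the standard textbook derivation of that corollary, with the one delicate point, the placement of the $|n-p|$ zero eigenvalues of the dilation below the top $q$ eigenvalues, correctly noted and handled.
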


	\begin{acks}[Acknowledgments]
		Z. D. Bai was partially supported by NSFC Grant 12171198 and Team Project of Jilin Provincial Department of Science and Technology (No.20210101147JC).  J. Hu was supported by NSFC (Nos. 12171078, 11971097).
	\end{acks}

	%% if your bibliography is in bibtex format, uncomment commands:
	\bibliographystyle{imsart-number} % Style BST file (imsart-number.bst or imsart-nameyear.bst)
	%\bibliography{bibliography}       % Bibliography file (usually '*.bib')
	%\bibliography{lsdt621}
	%% or include bibliography directly:

\end{document}